\documentclass[12pt,a4paper]{article}
\usepackage{theorem}%
\usepackage{graphicx}%
\usepackage{amssymb}%
\setlength{\textheight}{44\baselineskip}%
\setlength{\topmargin}{-1cm}%
\setlength{\oddsidemargin}{0cm}%
\setlength{\evensidemargin}{0cm}%
\setlength{\textwidth}{420pt}%
\parindent=0cm%
\parskip=5pt%
\newtheorem{lemma}{Lemma}

\newtheorem{theorem}[lemma]{Theorem}
\newtheorem{corollary}[lemma]{Corollary}
{\theorembodyfont{\upshape}}
{\theorembodyfont{\upshape}}
{\theorembodyfont{\upshape}}
{\theorembodyfont{\upshape}\newtheorem{example}[lemma]{Example}}
{\theorembodyfont{\upshape}}


\newcommand{\Z}{{\bf Z}}
\newcommand{\R}{{\bf R}}
\renewcommand{\H}{{\bf H}}
\newcommand{\C}{{\bf C}}

\newcommand{\rme}{{\rm e}}
\newcommand{\rmd}{{\rm d}}
\newcommand{\cA}{{\cal A}}
\newcommand{\cB}{{\cal B}}
\newcommand{\cC}{{\cal C}}
\newcommand{\cD}{{\cal D}}

\newcommand{\cF}{{\cal F}}

\newcommand{\cH}{{\cal H}}
\newcommand{\cI}{{\cal I}}
\newcommand{\cJ}{{\cal J}}
\newcommand{\cK}{{\cal K}}
\newcommand{\cL}{{\cal L}}

\newcommand{\cS}{{\cal S}}
\newcommand{\cU}{{\cal U}}

\newcommand{\cV}{{\cal V}}
\newcommand{\cY}{{\cal Y}}
\newcommand{\sig}{\sigma}
\newcommand{\alp}{\alpha}
\newcommand{\bet}{\beta}
\newcommand{\gam}{\gamma}

\newcommand{\lam}{\lambda}
\newcommand{\del}{\delta}
\newcommand{\eps}{\varepsilon}

\newcommand{\lap}{{\Delta}}

\newcommand{\Dom}{{\rm Dom}}

\newcommand{\norm}{\Vert}
\renewcommand{\Re}{{\rm Re}\,}

\newenvironment{proof}{\textit{Proof.}}{\hspace*{\fill}$\Box$}%
\newcommand{\Note}{\textit{Note.}\hspace{0.5em}}

\newcommand{\Schrodinger}{Schr\"odinger }




\newcommand{\ca}{$C^\ast$-algebra}
\newcommand{\csa}{$C^\ast$-subalgebra}
\newcommand{\sigess}{\sig_{\rm ess}}
\renewcommand{\emptyset}{\varnothing}
\newcommand{\ol}[1]{\overline{#1}}
\newcommand{\ti}[1]{\widetilde{#1}}
\newcommand{\ha}[1]{\widehat{#1}}
\newcommand{\pd}{\partial}
\newcommand{\sub}{\subseteq}
\newcommand{\ops}{one-parameter semigroup}
\newcommand{\implies}{\Rightarrow}
\newenvironment{choices}{\left\{\begin{array}{ll}}{\end{array}\right.}%
\newenvironment{eq}{\begin{equation}}{\end{equation}}%
\usepackage{hyperref}
\title{Decomposing the Essential Spectrum}
\author{E B Davies}
\date{18 September 2008}
\begin{document}
\maketitle

\begin{abstract}
We use \ca\ theory to provide a new method of decomposing the essential
spectra of self-adjoint and non-self-adjoint Schr\"odinger operators
in one or more space dimensions.

MSC-class: 35P05, 81Qxx, 47C15, 47Lxx.

keywords: essential spectrum, \ca, non-self-adjoint, discrete
Schr\"odinger operator
\end{abstract}

\section{Introduction\label{sectA}}

In a recent study by Hinchcliffe \cite{Hinchcliffe1} of the spectrum
of a periodic, discrete, non-self-adjoint Schr\"odinger operator
acting on $\Z^2$ with a dislocation along $\{0\}\times \Z$, we were
struck by the fact that the essential spectrum of the operator,
defined by means of the Calkin algebra, divides into two parts, one
of which occupies a region in the complex plane, the other being one
or more simple curves; the curves are associated with surface states
confined to a neighbourhood of the dislocation. The same phenomenon
occurs in the self-adjoint case, but here the distinction is between
parts of the (real) spectrum that have infinite spectral
multiplicity and other parts with finite multiplicity, at least in
two dimensions.

In this paper we describe a new method of decomposing the essential spectrum of a self-adjoint or non-self-adjoint \Schrodinger operator into parts by using the two-sided ideals of a certain standard \ca. Our conclusion is that one can define different types of essential spectrum, provided one is given this extra structure; we warn the reader that the spectral classification that we obtain is not a unitary invariant of the operators concerned. However, the \ca\ used is the same for all the applications considered so the results obtained have a high degree of model-independence.

Some of our spectral results can be proved by methods that are geometric in the sense that they involve Hilbert space methods rather than \ca s. An  advantage of the approach described here is that instead of dealing with new applications by invoking analogy and experience, the use of \ca s enables one to formulate simple general theorems that cover applications directly. The method accommodates many of the technical hypotheses that have been used in the field within a single formalism.

In Sections~\ref{sectB} and \ref{sectC} we investigate the relevant \ca\ theory without reference to its application. Section~\ref{sectD} is devoted to showing how to apply the results to discrete \Schrodinger operators. Theorems~\ref{line} and \ref{cross} describe the spectrum when a periodic potential has a dislocation on one or both of the two axes in $\Z^2$; the second possibility has not previously been considered. After a substantial amount of preparatory work, we turn in Section~\ref{sectE} to the study of \Schrodinger and more general differential operators acting in $L^2(\R^d)$, and show that the abstract methods developed earlier can be applied to their resolvent operators under suitable hypotheses. The spectral mapping theorem then allows one to pull the results back to the original operators. Example~\ref{mbso} explains the application of the methods to multi-body \Schrodinger operators. Finally, in Section~\ref{hyperb} we show that our methods are not only relevant in a Euclidean context. We prove that the $C^\ast$-algebraic assumptions are satisfied when considering the Laplace-Beltrami operator on three-dimensional hyperbolic space by writing down the explicit formulae available in this case; the same applies to a wide variety of other Riemannian manifolds but more general methods are needed.

It might be thought that \ca\ methods cannot cope with problems in scattering theory because the relevant unitary groups $\rme^{-iHt}$ are strongly but not norm continuous. In fact a large part of the `geometric' approach to scattering theory as developed by Enss and others depends on considering the large time asymptotics of $\rme^{-iHt}A$ where the `localization operator' $A$ imposes position and momentum (or energy) cut-offs. The theory depends on making suitable norm estimates and the relevant integrals are mostly norm convergent or can be rewritten as norm convergent integrals; see \cite{enss,simsc,EBDsc}. In this context the important point is that a uniformly bounded family of operators $B_t$ depends strongly continuously on the parameter $t$ if and only if $B_tA$ depends norm continuously on $t$ for enough compact operators $A$; the union of the ranges of the $A$ chosen must span a dense linear subspace of $\cH$. The trick is to choose $A$ appropriately, and this may be done in several ways.

\section{Some \ca\ theory\label{sectB}}

Throughout this section $\cA$ will denote a (usually
non-commutative) \ca\ with identity, and $\cJ$ will denote a
(closed, two-sided) ideal in $\cA$. It is well-known that such an
ideal is necessarily closed under adjoints and that $\cA/\cJ$ is
again a \ca\ with respect to the quotient norm. See
\cite[Chapter~1]{Dix} or \cite[Chapter~1]{ped} for various standard
facts about \ca s that we will use without further comment.

If $x\in\cA$ then we denote the spectrum of $x$ by $\sig(x)$; it is
known that if $\cA$ is replaced by a larger \ca, $\sig(x)$ does not
change. If $\cJ$ is an ideal in $\cA$ we denote the natural map of
$\cA$ onto the quotient algebra $\cA/\cJ$ by $\pi_\cJ$. If several
ideals $\cJ_r$ are labelled by a parameter $r$, we write $\pi_r$
instead of $\pi_{\cJ_r}$ for brevity, and also put
$\sig_r(x)=\sig(\pi_{\cJ_r}(x))$

\begin{lemma}\label{kideals}
If the ideals $\{\cJ_r\}_{r=1}^k$ in $\cA$ satisfy
\[
\cJ_k\subseteq \cJ_{k-1}\subseteq \ldots \subseteq \cJ_2\subseteq
\cJ_1\subseteq \cA
\]
then
\[
\sig_1(x)\subseteq\sig_2(x)\subseteq\ldots\subseteq\sig_{k-1}(x)%
\subseteq\sig_k(x)\subseteq\sig(x).
\]
\end{lemma}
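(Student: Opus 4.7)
The plan is to reduce everything to the following standard fact: if $\phi: \cB \to \cC$ is a unital surjective $\ast$-homomorphism between unital \ca s, then $\sig(\phi(b)) \subseteq \sig(b)$ for every $b \in \cB$. Indeed, if $b - \lambda$ is invertible in $\cB$, then applying $\phi$ to $(b-\lambda)^{-1}(b-\lambda) = 1$ shows $\phi(b) - \lambda$ is invertible in $\cC$. I would quote this from the chapters of Dixmier or Pedersen cited just before the lemma statement.

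Next, the key observation is that the inclusion $\cJ_{r+1} \subseteq \cJ_r$ produces a natural unital surjective $\ast$-homomorphism
\[
\phi_{r+1,r}: \cA/\cJ_{r+1} \longrightarrow \cA/\cJ_r, \qquad x + \cJ_{r+1} \longmapsto x + \cJ_r,
\]
which is well-defined precisely because $\cJ_{r+1} \subseteq \cJ_r$, and which satisfies $\phi_{r+1,r} \comp \pi_{r+1} = \pi_r$. The standard fact above therefore yields
\[
\sig_r(x) = \sig\bigl(\pi_r(x)\bigr) = \sig\bigl(\phi_{r+1,r}(\pi_{r+1}(x))\bigr) \subseteq \sig\bigl(\pi_{r+1}(x)\bigr) = \sig_{r+1}(x)
\]
for each $r = 1, \ldots, k-1$.

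For the final inclusion $\sig_k(x) \subseteq \sig(x)$, I would apply the same fact to the quotient map $\pi_k: \cA \to \cA/\cJ_k$ itself, which is a unital surjective $\ast$-homomorphism, giving $\sig(\pi_k(x)) \subseteq \sig(x)$. Chaining all of these inclusions produces the required string.

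Honestly, there is no serious obstacle here; the whole argument rests on the single observation that spectra can only shrink under unital surjective $\ast$-homomorphisms, applied to the tower of quotient maps induced by the nested ideals. The only thing to be mildly careful about is the direction of the inclusions: the larger the ideal, the more elements become invertible in the quotient, so the smaller the spectrum. This explains why the ordering of the $\sig_r(x)$ is reversed relative to the ordering of the $\cJ_r$.
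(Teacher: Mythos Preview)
Your proposal is correct and takes essentially the same approach as the paper. The paper unwinds your ``standard fact'' explicitly---lifting an inverse from $\cA/\cJ_s$ to $\cA$ and then pushing it down to $\cA/\cJ_r$ using $\cJ_s\subseteq\cJ_r$---whereas you package this step as the general statement that spectra shrink under unital surjective homomorphisms and apply it to the induced maps between successive quotients; the underlying argument is identical.
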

\begin{proof} We first put $\cJ_{k+1}=\{0\}$, so that $\cA/{\cJ_{k+1}}=\cA$
and $\sig_{k+1}(x)=\sig(x)$. Suppose that $1\leq r\leq s\leq k+1$
and that $\lam\notin \sig_s(x)$. Then there exists $y\in \cA$ such
that
\[
(\pi_s(x)-\lam 1)\pi_s(y)=\pi_s(y)(\lam 1-\pi_s(x))=1
\]
in $\cA/\cJ_s$. Hence there exist $u,v\in \cJ_s$ such that
\[
(x-\lam 1)y=1+u, \hspace{3em} y(\lam 1-x)=1+v.
\]
Applying $\pi_r$ to both equations and using the fact that
$\cJ_s\subseteq \cJ_r$ we obtain
\[
(\pi_r(x)-\lam 1)\pi_r(y)=\pi_r(y)(\lam 1-\pi_r(x))=1.
\]
Hence $\lam\notin \sig_r(x)$ and $\sig_r(x)\subseteq \sig_s(x)$.
\end{proof}

\Note If $\cA=\cL(\cH)$ and $\cJ$ is the ideal $\cK(\cH)$ of all
compact operators on the Hilbert space $\cH$, then
$\sig(\pi_\cJ(x))$ is (one of several inequivalent definitions of)
the essential spectrum of $x$ by \cite[Th. 4.3.7]{LOTS}. Needless to
say we are interested in more general examples.

There are several ways of constructing $\cA$ and the relevant ideals
$\cJ_r$. Given $\cJ\!$, the largest choice of $\cA$ is described in
(\ref{Adef1}) and more concretely in Lemma~\ref{Adef3}. If one
wishes make another choice, call it $\tilde{\cA}$, one has to
confirm that $\cJ\subseteq \tilde{\cA}\subseteq \cA$.

\begin{theorem}\label{Jstart} Let $\cB$ be a \ca\ with identity and let $\{p_n\}_{n=1}^\infty$ be an increasing sequence of orthogonal projections in $\cB$ with $p_n\not= 1$ for every $n$. Then the norm closure $\cJ$ of
\[
\cJ_0=\{ x\in \cB: \exists n\geq 1.\,\, p_nx=xp_n=x\}
\]
is a \csa\ that does not contain the identity of $\cB$. We have
\begin{equation}
\cJ=\{ x\in\cB:\lim_{n\to\infty}\norm x-p_nxp_n\norm
=0\}.\label{J0closure}
\end{equation}
Moreover $\cJ$ is an ideal in the \ca\ with identity $\cA$ defined
by
\begin{equation}
\cA=\{ a\in \cB:a\cJ\subseteq \cJ \mbox{ {\rm and} } \cJ a\subseteq
\cJ\}.\label{Adef1}
\end{equation}
If $\cB=\cL(\cH)$ and $p_n$ converge strongly to $I$ as $n\to\infty$
then
\[
\cK(\cH)\subseteq \cJ\subseteq \cA,
\]
so
\[
\sig(\pi_\cJ(x))\subseteq \sigess(x)\subseteq \sig(x)
\]
for all $x\in\cA$.
\end{theorem}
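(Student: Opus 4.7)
The plan is to verify the four assertions roughly in the order stated: (i) $\cJ$ is a $C^\ast$-subalgebra not containing $1$; (ii) the characterisation (\ref{J0closure}); (iii) $\cJ$ is an ideal in $\cA$; (iv) the compactness and spectral inclusions in the Hilbert-space case.

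First I would check that $\cJ_0$ is already a $\ast$-subalgebra. Since the $p_n$ are self-adjoint, $\cJ_0$ is closed under adjoints. For products one uses the monotonicity of $\{p_n\}$: if $p_n x=xp_n=x$ and $p_m y=yp_m=y$, and $N=\max(n,m)$, then $p_N p_n=p_n$ forces $p_N x=x$, and similarly for $y$, so $xy\in\cJ_0$. Taking the closure, $\cJ$ is a $C^\ast$-subalgebra. To see $1\notin\cJ$, note that each $x\in\cJ_0$ satisfies $(1-p_{n(x)})x=0$ for some $n(x)$; since $p_{n(x)}\neq 1$ the projection $1-p_{n(x)}$ has norm one, and $(1-p_{n(x)})(1-x)=1-p_{n(x)}$ gives $\|1-x\|\geq 1$, an inequality preserved on passage to the closure.

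Next I would establish (\ref{J0closure}). For the inclusion $\supseteq$, observe that $p_nxp_n\in\cJ_0$ by construction, so any $x$ with $\|x-p_nxp_n\|\to 0$ lies in $\cJ$. For the inclusion $\subseteq$, given $x\in\cJ$ and $\eps>0$ choose $x_k\in\cJ_0$ with $\|x-x_k\|<\eps$; by the defining property of $\cJ_0$ there is $N$ with $p_nx_kp_n=x_k$ for every $n\geq N$, and then
\[
\|x-p_nxp_n\|\leq\|x-x_k\|+\|p_n(x_k-x)p_n\|\leq 2\eps\qquad(n\geq N),
\]
which proves convergence. The ideal property is then effectively built into the definition of $\cA$, but one needs to verify that $\cA$ is itself a norm-closed, unital $\ast$-subalgebra: unitality is immediate, closure under sum, product and norm-limit is a short check, and stability under adjoint uses $\cJ=\cJ^\ast$. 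Finally $\cJ\subseteq\cA$ because $\cJ$, being a $C^\ast$-subalgebra, already satisfies $\cJ\cdot\cJ\subseteq\cJ$.

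The last and most substantive step is to show $\cK(\cH)\subseteq\cJ$ when $p_n\to I$ strongly. I would reduce to rank-one operators $A=\la\cdot,e\ra f$ by density, and use (\ref{J0closure}): write
\[
\|A-p_nAp_n\|\leq\|(I-p_n)A\|+\|p_nA(I-p_n)\|\leq\|(I-p_n)A\|+\|A(I-p_n)\|.
\]
Since $p_n\to I$ strongly and $A,A^\ast$ are compact, the standard fact that the product of a compact operator with a strongly convergent net converges in norm gives both terms $\to 0$. Hence every rank-one, and so every compact, operator is in $\cJ$. The spectral inclusions $\sig(\pi_\cJ(x))\subseteq\sigess(x)\subseteq\sig(x)$ then follow at once from Lemma~\ref{kideals} applied to the chain $\cK(\cH)\subseteq\cJ\subseteq\cA$, together with the identification of $\sigess$ as the Calkin-algebra spectrum noted above.

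The main technical obstacle I anticipate is (\ref{J0closure}): one must take care that the approximation by $\cJ_0$-elements really does force the sandwich $p_nxp_n$ to approximate $x$ uniformly, rather than merely strongly. The rest is algebraic bookkeeping plus the classical norm-convergence property of compact operators.
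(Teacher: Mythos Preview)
Your proposal is correct and follows essentially the same route as the paper: the same $2\eps$ sandwich estimate for (\ref{J0closure}), the same $\|1-x\|\geq 1$ argument for $1\notin\cJ$, and the same reduction of $\cK(\cH)\subseteq\cJ$ to finite-rank operators via (\ref{J0closure}). The only cosmetic difference is that the paper simply asserts $\|x-p_nxp_n\|\to 0$ for finite-rank $x$, whereas you spell out the two-term bound and invoke the compact-times-strongly-null norm convergence; both amount to the same observation.
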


\begin{proof} First note that if $p_nx=xp_n=x$ then $p_mx=xp_m=x$ for all
$m\geq n$. It follows by elementary algebra that $\cJ_0$ is a
$^\ast$-subalgebra of $\cB$, and this implies the same for $\cJ$. If
$x\in\cJ_0$ then there exists $n$ for which $1-p_n=(1-p_n)(1-x)$.
Therefore
\[
1=\norm 1-p_n\norm=\norm (1-p_n)(1-x)\norm\leq \norm 1-p_n\norm\norm
1-x\norm =\norm 1-x \norm\
\]
because $p_n\not= 1$ for every $n$. Hence $\norm 1-x\norm \geq 1$
for all $x\in\cJ$ and we can deduce that $1\notin \cJ$.

If $x\in\cB$ and $\lim_{n\to\infty}\norm x-p_nxp_n\norm =0$ then the
fact that $p_nxp_n\in\cJ_0$ implies that $x\in\cJ$. Conversely if
$x\in\cJ$ and $\eps>0$ then there exists $y\in\cJ_0$ such that
$\norm x-y\norm <\eps$. There now exists $N\geq 1$ such that
$y=p_nyp_n$ for all $n\geq N$. For all such $n$ we have
\begin{eqnarray*}
\norm x-p_nxp_n\norm &\leq& \norm x-y\norm+\norm y-p_nxp_n\norm\\
&=& \norm x-y\norm+\norm p_nyp_n-p_nxp_n\norm\\
&\leq & 2\norm x-y\norm \\
&<& 2\eps.
\end{eqnarray*}
Hence $\lim_{n\to\infty}\norm x-p_nxp_n\norm =0$.

The proofs that $\cA$ is a \ca\ with identity and that $\cJ$ is an
ideal in $\cA$ are both elementary algebra.

If $\cB=\cL(\cH)$ then in order to prove that $\cK(\cH)\subseteq
\cJ$ it is sufficient by (\ref{J0closure}) and a density argument to
observe that if $x$ is a finite rank operator then
$\lim_{n\to\infty}\norm x-p_nxp_n\norm \to 0$. The final inclusion
of the theorem follows from Lemma~\ref{kideals}.
\end{proof}

The following provides an alternative description of $\cA$.

\begin{lemma}\label{Adef3} Let $\cB$, $\{p_n\}_{n=1}^\infty$, $\cJ$ and $\cA$ be defined as in Theorem~\ref{Jstart}. Let
\begin{equation}
\cD_0=\{ a\in\cB:\forall n\geq 1.\,\exists m\geq n.\,p_m a
p_n=ap_n.\}\label{cD0def}
\end{equation}
and
\begin{equation}
\cD=\{ a\in\cB:\forall n\geq 1.\, ap_n\in\cJ\}.\label{cDdef}
\end{equation}
Then $\cD_0\subseteq\cD$ and $\cA=\cD\cap\cD^\ast$.
\end{lemma}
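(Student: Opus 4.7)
The plan is to establish three inclusions: $\cD_0 \subseteq \cD$, then $\cA \subseteq \cD \cap \cD^\ast$, and finally $\cD \cap \cD^\ast \subseteq \cA$. Throughout I would lean on two facts from Theorem~\ref{Jstart}: that $\cJ$ is a $C^\ast$-subalgebra of $\cB$ (so in particular closed under both multiplication and adjoints), and that each $p_n$ lies in $\cJ_0 \subseteq \cJ$ (take $y = p_n$ in the definition of $\cJ_0$, using $p_n p_n = p_n$).

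For $\cD_0 \subseteq \cD$, I would take $a \in \cD_0$ and any $n \geq 1$, choose $m \geq n$ with $p_m a p_n = a p_n$, and set $y = a p_n$. Then $p_m y = p_m (p_m a p_n) = p_m a p_n = y$, and using the monotonicity $p_n p_m = p_m p_n = p_n$ we get $y p_m = p_m a p_n p_m = p_m a p_n = y$. Hence $y \in \cJ_0 \subseteq \cJ$, so $a p_n \in \cJ$ and $a \in \cD$.

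For $\cA \subseteq \cD \cap \cD^\ast$, I would first note that $\cA$ is closed under adjoints: if $a \in \cA$, take adjoints of $a\cJ \subseteq \cJ$ and $\cJ a \subseteq \cJ$ and use $\cJ^\ast = \cJ$ to conclude $a^\ast \in \cA$. Now for $a \in \cA$, since $p_n \in \cJ$ we have $a p_n \in a\cJ \subseteq \cJ$ for every $n$, so $a \in \cD$; applying the same argument to $a^\ast \in \cA$ gives $a^\ast \in \cD$, i.e.\ $a \in \cD^\ast$.

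For the reverse inclusion $\cD \cap \cD^\ast \subseteq \cA$, the crux is showing $a\cJ \subseteq \cJ$ for $a \in \cD$ (the other half $\cJ a \subseteq \cJ$ follows from $a^\ast \in \cD$ by adjoining). It suffices by density and norm-closedness of $\cJ$ to treat $x \in \cJ_0$. Such an $x$ satisfies $x = p_n x$ for some $n$, so $ax = (a p_n)\, x$, a product of $a p_n \in \cJ$ (by $a \in \cD$) and $x \in \cJ_0 \subseteq \cJ$, which lies in $\cJ$ since $\cJ$ is a subalgebra. Extending to arbitrary $x \in \cJ$ by continuity of multiplication completes the argument. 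No step presents a serious obstacle; the only care needed is to remember that $p_n \in \cJ$ and that $\cJ$ is a $\ast$-subalgebra, both of which were established in Theorem~\ref{Jstart}.
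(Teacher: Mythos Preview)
Your proof is correct and follows essentially the same route as the paper: the inclusions $\cD_0\subseteq\cD$ and $\cA\subseteq\cD\cap\cD^\ast$ are dispatched as elementary, and the key step $\cD\cap\cD^\ast\subseteq\cA$ is handled by writing $ax=(ap_n)x$ for $x\in\cJ_0$ with $p_nx=x$, then extending by density. The paper simply declares the first two inclusions ``elementary'' where you spell them out, but the substance is identical.
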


\begin{proof} The inclusions $\cD_0\subseteq \cD$ and $\cA\subseteq
\cD\cap\cD^\ast$ are elementary. If $a\in \cD$ and $x\in\cJ_0$ then
for some $n\geq 1$ we have
\[
\begin{array}{c}
ax=a(p_nx)=(ap_n)x\in \cJ.\cJ_0\subseteq \cJ.\\
\end{array}
\]
A density argument now implies that $a\cJ\subseteq\cJ$. By taking
adjoints we conclude that $\cD\cap\cD^\ast\subseteq\cA $.
\end{proof}

\Note In spite of the notation we do not claim that $\cD$ is the
norm closure of $\cD_0$.

\begin{lemma}\label{weaklim}
Let $\{p_n\}_{n=1}^\infty$ be an increasing sequence of projections
on $\cH$ that converge strongly to $1$ and let $\cJ$ and $\cA$ be
constructed as described in Theorem~\ref{Jstart}. If $\{
\phi_r\}_{r=1}^\infty$ is a sequence of unit vectors in $\cH$ and
$\lim_{r\to\infty}\norm p_n\phi_r\norm=0$ for every $n\geq 1$ then
$\lim_{r\to\infty}\norm a\phi_r\norm=0$ for every $a\in\cJ$.
\end{lemma}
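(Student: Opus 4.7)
The plan is a straightforward two-step density argument, reducing the general case to elements of $\cJ_0$ (or equivalently to the approximants $p_n x p_n$ supplied by the identity (\ref{J0closure})).

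First I would handle the easy case $a\in\cJ_0$. By definition there exists $n\geq 1$ with $ap_n=a$, so
\[
\norm a\phi_r\norm = \norm ap_n\phi_r\norm \leq \norm a\norm\,\norm p_n\phi_r\norm,
\]
and the right-hand side tends to $0$ by hypothesis.

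Next I would promote this to all of $\cJ$ using that $\cJ$ is the norm closure of $\cJ_0$. Fix $a\in\cJ$ and $\eps>0$, and choose $b\in\cJ_0$ with $\norm a-b\norm<\eps$. Since the $\phi_r$ are unit vectors,
\[
\norm a\phi_r\norm \leq \norm(a-b)\phi_r\norm+\norm b\phi_r\norm < \eps+\norm b\phi_r\norm.
\]
The first step applied to $b$ gives $\lim_{r\to\infty}\norm b\phi_r\norm=0$, hence $\limsup_{r\to\infty}\norm a\phi_r\norm\leq\eps$. As $\eps>0$ was arbitrary, $\lim_{r\to\infty}\norm a\phi_r\norm=0$.

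There is really no obstacle here; the only point worth noting is that $\norm\phi_r\norm=1$ is needed (or at least boundedness of $\norm\phi_r\norm$) to control the $\norm(a-b)\phi_r\norm$ term uniformly in $r$. One could alternatively bypass $\cJ_0$ and use (\ref{J0closure}) directly: given $\eps>0$, pick $n$ with $\norm a-p_nap_n\norm<\eps$, and then $\norm a\phi_r\norm\leq\eps+\norm a\norm\,\norm p_n\phi_r\norm\to\eps$. Either version is a couple of lines.
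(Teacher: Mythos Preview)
Your proposal is correct and is exactly the approach the paper takes: the paper's proof consists of the single sentence ``This is elementary if $a\in\cJ_0$ and follows for all $a\in\cJ$ by approximation,'' and you have written out precisely those two steps.
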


\begin{proof} This is elementary if $a\in\cJ_0$ and follows for all
$a\in\cJ$ by approximation.
\end{proof}

We say that a sequence $\{\phi_r\}_{r=1}^\infty$ of unit vectors in
$\cH$ is localized (with respect to $\cJ$) if there exists $n\geq 1$
and $c>0$ such that $\norm p_n\phi_r\norm\geq c$ for all $r\geq 1$.

\begin{theorem}\label{localize}
If $x\in\cA$ and $\lam\in\sig(x)\backslash \sig(\pi_\cJ(x))$ then
there exists a sequence $\{\phi_r\}_{r=1}^\infty$ that is localized
with respect to $\cJ$ and satisfies either
\begin{equation}
\lim_{r\to\infty}\norm x\phi_r-\lam\phi_r\norm =0\label{localseq}
\end{equation}
or
\begin{equation}
\lim_{r\to\infty}\norm x^\ast\phi_r-\overline{\lam}\phi_r\norm
=0.\label{localseqst}
\end{equation}
\end{theorem}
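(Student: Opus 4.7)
The plan is to combine two standard facts with the $C^\ast$-algebraic data provided by the assumption $\lam\notin\sig(\pi_\cJ(x))$. First, since $\lam\in\sig(x)$ and $x$ acts on a Hilbert space, standard operator theory tells us that $x-\lam 1$ fails to be invertible in $\cL(\cH)$ precisely because either $x-\lam 1$ or its adjoint $x^\ast-\overline\lam 1$ fails to be bounded below. In the first case there exists a sequence of unit vectors $\{\phi_r\}$ with $\norm (x-\lam 1)\phi_r\norm\to 0$, and we aim to show this sequence has a localized subsequence; in the second case we argue symmetrically with $x^\ast$ and $\overline\lam$, noting that $\pi_\cJ(x^\ast)=\pi_\cJ(x)^\ast$ so $\overline\lam\notin\sig(\pi_\cJ(x^\ast))$ as well.

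Second, the assumption $\lam\notin\sig(\pi_\cJ(x))$ produces some $y\in\cA$ with $y(x-\lam 1)=1+v$ for some $v\in\cJ$ (and in the adjoint case, $y^\ast(x^\ast-\overline\lam 1)=1+v^\ast$ where $v^\ast\in\cJ$ by closure of ideals under adjoints). This is the $C^\ast$-algebraic lever.

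The main step, and the place to be careful, is the localization argument. Suppose (in Case 1) that no subsequence of $\{\phi_r\}$ is localized with respect to $\cJ$. The negation of localization for every subsequence unpacks to: for each fixed $n\geq 1$ and $c>0$, only finitely many $r$ satisfy $\norm p_n\phi_r\norm \geq c$, equivalently $\lim_{r\to\infty}\norm p_n\phi_r\norm=0$ for every $n$. Applying Lemma~\ref{weaklim}, this forces $\norm a\phi_r\norm\to 0$ for every $a\in\cJ$, in particular for $a=v$. But then from $\phi_r=y(x-\lam 1)\phi_r - v\phi_r$ we obtain
\[
1=\norm\phi_r\norm\leq \norm y\norm\,\norm (x-\lam 1)\phi_r\norm+\norm v\phi_r\norm\longrightarrow 0,
\]
a contradiction. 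Hence some subsequence of $\{\phi_r\}$ is localized, and relabelling it yields the required sequence satisfying (\ref{localseq}).

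Case 2 is handled identically after replacing $x,\lam,y,v$ by $x^\ast,\overline\lam,y^\ast,v^\ast$ and yields (\ref{localseqst}). The only real obstacle is the unpacking of ``no localized subsequence'' into the strong convergence $p_n\phi_r\to 0$ for every $n$; once that is in place the ideal identity plus Lemma~\ref{weaklim} closes the argument cleanly.
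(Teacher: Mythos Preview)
Your argument is correct and follows essentially the same route as the paper: obtain an approximate eigenvector sequence from $\lam\in\sig(x)$, use the invertibility of $\pi_\cJ(x-\lam 1)$ to write $y(x-\lam 1)=1+v$ with $v\in\cJ$, and derive a contradiction via Lemma~\ref{weaklim} under the assumption that $\norm p_n\phi_r\norm\to 0$ for every $n$. Your unpacking of ``no localized subsequence'' into $\norm p_n\phi_r\norm\to 0$ for each $n$ is in fact slightly more explicit than the paper's version, which simply asserts the existence of the required subsequence once the contradiction is reached.
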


\begin{proof} If $\lam\in\sig(x)$ then there exists a sequence
$\{\phi_r\}_{r=1}^\infty$ of unit vectors such that either
(\ref{localseq}) or (\ref{localseqst}) holds; see
\cite[Lemma~1.2.13]{LOTS}. Both cases are similar and we only
consider the first.

If $\lam\in\sig(x)\backslash \sig(\pi_\cJ(x))$ and (\ref{localseq})
holds and  $\lim_{r\to\infty}\norm p_n\phi_r\norm=0$ for all $n\geq
1$ then $\pi_\cJ (\lam 1-x)$ is invertible in $\cA/\cJ$, so there
exist $y\in\cA$ and $a\in\cJ$ such that
\[
y(\lam 1-x)=1+a.
\]
Lemma~\ref{weaklim} now yields
\begin{eqnarray*}
1&=&\lim_{r\to\infty}\norm (1+a)\phi_r\norm\\
&\leq&\lim_{r\to\infty}\left(\norm y\norm \,\norm (\lam 1-x)\phi_r\norm\right)\\
&=&0.
\end{eqnarray*}
The contradiction establishes that if $\lam\in\sig(x)\backslash
\sig(\pi_\cJ(x))$ then $\norm p_n\phi_r\norm$ does not converge to
$0$ as $r\to\infty$ for some $n\geq 1$. It follows that there exists
a subsequence $\{\psi_r\}_{r=1}^\infty$ and $c>0$ such that $\norm
p_n\psi_r\norm\geq c$ for all $r\geq 1$.
\end{proof}

\Note Theorem~\ref{localize} has no converse. If $a\in \cA$ is a
self-adjoint operator then any eigenvalue $\lam$ of $a$ that is
embedded in the continuous spectrum satisfies the conclusion of the
theorem for the choice $\cJ=\cK(\cH)$. One simply defines $\phi_n$
to be the normalized eigenvector of $a$ corresponding to the
eigenvalue $\lam$ for all $n$.

Sometimes one has several ideals in $\cA$ but neither is contained
in the other.

\begin{theorem}\label{J1J2} Let $\cJ_1$ and $\cJ_2$ be two ideals in the \ca\ $\cA$ with identity, and put $\cJ_3=\cJ_1\cap\cJ_2$. Then
\[
\sig_3(x)=\sig_1(x)\cup\sig_2(x)
\]
for all $x\in\cA$.
\end{theorem}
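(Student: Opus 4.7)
The first inclusion is essentially free: since $\cJ_3 = \cJ_1 \cap \cJ_2$ is contained in both $\cJ_1$ and $\cJ_2$, Lemma~\ref{kideals} immediately gives $\sig_1(x) \cup \sig_2(x) \subseteq \sig_3(x)$. So the whole content of the theorem is the reverse inclusion, for which the plan is to fix $\lam \notin \sig_1(x) \cup \sig_2(x)$ and produce an explicit two-sided inverse of $\pi_3(x - \lam 1)$ in $\cA/\cJ_3$.

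Concretely, the hypothesis supplies $y_1, y_2 \in \cA$ together with $u_i, v_i \in \cJ_i$ satisfying
\[
(x - \lam 1) y_i = 1 + u_i, \qquad y_i(x - \lam 1) = 1 + v_i \qquad (i = 1,2).
\]
My candidate for an inverse modulo $\cJ_3$ is $y_3 = y_1 + y_2 - y_1(x - \lam 1) y_2$, the standard trick for inverting simultaneously modulo two ideals. Substituting $(x - \lam 1) y_1 = 1 + u_1$ on the left and $y_2 (x - \lam 1) = 1 + v_2$ on the right should collapse the bookkeeping to $(x - \lam 1) y_3 = 1 - u_1 u_2$ and $y_3(x - \lam 1) = 1 - v_1 v_2$. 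Since $\cJ_1$ and $\cJ_2$ are two-sided ideals, $u_1 u_2$ and $v_1 v_2$ lie in $\cJ_1 \cap \cJ_2 = \cJ_3$, so $\pi_3(y_3)$ is the required inverse and $\lam \notin \sig_3(x)$.

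A cleaner alternative, which I would at least mention, is the observation that $a + \cJ_3 \mapsto (a + \cJ_1,\, a + \cJ_2)$ is a well-defined $^\ast$-homomorphism $\cA/\cJ_3 \to (\cA/\cJ_1) \oplus (\cA/\cJ_2)$, and its injectivity is precisely the identity $\cJ_3 = \cJ_1 \cap \cJ_2$. Since injective $^\ast$-homomorphisms between $C^\ast$-algebras are isometric and in particular spectrum-preserving, and since the spectrum of $(\pi_1(x), \pi_2(x))$ in the direct sum is visibly $\sig_1(x) \cup \sig_2(x)$, the claim follows without any explicit algebra. I do not foresee any real obstacle in either route: the only delicate point is the algebraic bookkeeping that identifies the remainder terms $u_1 u_2$ and $v_1 v_2$ and places them in the intersection ideal.
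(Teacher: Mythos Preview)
Your second route---the injective $^\ast$-homomorphism $\cA/\cJ_3 \to (\cA/\cJ_1)\oplus(\cA/\cJ_2)$ combined with spectral permanence---is exactly the paper's proof. Your first route, the explicit construction of $y_3 = y_1 + y_2 - y_1(x-\lam 1)y_2$, is a genuinely different argument: it is purely algebraic and makes no use of the $C^\ast$-structure, so it actually proves the stronger statement that the identity $\sig_3(x)=\sig_1(x)\cup\sig_2(x)$ holds for two-sided ideals in any associative algebra with identity. The bookkeeping you sketched is correct (the remainders are indeed $-u_1u_2$ and $-v_1v_2$, which lie in $\cJ_1\cap\cJ_2$ because each $\cJ_i$ is two-sided). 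The paper's approach is shorter but leans on the nontrivial fact that spectra are preserved under passage to $C^\ast$-subalgebras; your explicit approach trades that for a line of algebra and gains generality.
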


\begin{proof} It is elementary that $\cJ_3$ is an ideal. Let
$\cB=\cA/\cJ_1\oplus \cA/\cJ_2$ and define the $^\ast$-homomorphism
$\pi:\cA\to\cB$ by $\pi=\pi_1\oplus \pi_2$. Then the image
$\cC=\pi(\cA)$ is a \csa\ of $\cB$ and the kernel of $\pi$ is
$\cJ_3$. If $x\in\cA$ then the spectrum of $\pi (x)$ is the same
whether regarded as an element of $\cB$ or $\cC$. In the former case
the spectrum is $\sig_1(x)\cup\sig_2(x)$ and in the latter case it
is $\sig_3(x)$.
\end{proof}

We next describe one of the \ca s that we shall be using in the next
section. Let $\cH_1$ and $\cH_2$ be infinite-dimensional Hilbert
spaces and let $\cH=\cH_1\otimes \cH_2$ be their Hilbert space
tensor product. Let $I_i$ denote the identity operator on $\cH_i$
for $i=1,\, 2$.

\begin{theorem}\label{tensorJ}  Let $\{P_n\}_{n=1}^\infty$ be an increasing sequence of finite rank projections in $\cH_1$ which converges strongly to $I_1$ as $n\to\infty$ and put $p_n=P_n\otimes I_2$. Then $\cJ$ defined as in Theorem~\ref{Jstart} is the closed linear span of all operators $A_1\otimes A_2$ where $A_1\in\cK(\cH_1)$ and $A_2\in \cL(\cH_2)$. Also $\cA$, defined as in Theorem~\ref{Jstart}, contains the closed linear span of all operators $A_1\otimes A_2$ where $A_i\in\cL(\cH_i)$ for $i=1,\, 2$.
\end{theorem}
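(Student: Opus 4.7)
My plan is to handle the two assertions separately, using the characterization $\cJ=\{x\in\cB:\lim_{n\to\infty}\norm x-p_nxp_n\norm=0\}$ from (\ref{J0closure}) and the definition of $\cA$ in (\ref{Adef1}). Write $\cM=\clin\{A_1\otimes A_2:A_1\in\cK(\cH_1),\,A_2\in\cL(\cH_2)\}$ for the proposed form of $\cJ$, and $\cN=\clin\{A_1\otimes A_2:A_i\in\cL(\cH_i)\}$ for the candidate subspace of $\cA$.

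For the inclusion $\cM\subseteq\cJ$, I would use the identity $p_n(A_1\otimes A_2)p_n=(P_nA_1P_n)\otimes A_2$, which together with the tensor-product norm identity gives
\[
\norm A_1\otimes A_2-p_n(A_1\otimes A_2)p_n\norm=\norm A_1-P_nA_1P_n\norm\cdot\norm A_2\norm.
\]
Since $A_1$ is compact and $P_n\to I_1$ strongly, $P_nA_1P_n\to A_1$ in norm (the standard consequence that strong convergence is uniform on compact sets), so the right side tends to zero and each such elementary tensor lies in $\cJ$ by (\ref{J0closure}); closedness of $\cJ$ then finishes the direction. For $\cJ\subseteq\cM$, it is enough by closedness of $\cM$ to show $\cJ_0\subseteq\cM$. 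Given $x\in\cJ_0$ with $p_nxp_n=x$, I would pick an orthonormal basis $e_1,\ldots,e_k$ of the range of $P_n$ and verify the block decomposition
\[
x=\sum_{i,j=1}^k\,(|e_i\rangle\langle e_j|)\otimes B_{ij},
\]
where $B_{ij}\in\cL(\cH_2)$ is determined by $\langle\phi,B_{ij}\psi\rangle=\langle e_i\otimes\phi,\,x(e_j\otimes\psi)\rangle$. Each first factor is rank-one on $\cH_1$, hence compact, so $x\in\cM$.

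For $\cN\subseteq\cA$, I would first observe that $\cA$ is itself a norm-closed linear subspace of $\cB$, so it suffices to show that every elementary tensor $A_1\otimes A_2$ with $A_i\in\cL(\cH_i)$ satisfies $(A_1\otimes A_2)\cJ\subseteq\cJ$ and $\cJ(A_1\otimes A_2)\subseteq\cJ$. Using the identification $\cJ=\cM$ just proved together with closedness of $\cJ$, density reduces this to the case of tensors $B_1\otimes B_2$ with $B_1\in\cK(\cH_1)$ and $B_2\in\cL(\cH_2)$; here $(A_1\otimes A_2)(B_1\otimes B_2)=(A_1B_1)\otimes(A_2B_2)$ and the analogous right product both have compact first factor because $\cK(\cH_1)$ is a two-sided ideal in $\cL(\cH_1)$, so both lie in $\cM=\cJ$.

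The only genuinely technical step is the block decomposition formula in the second paragraph; it relies essentially on the finite rank of $P_n$, which is what permits a representation of $p_nxp_n$ as a \emph{finite} sum of elementary tensors with compact first factor. All other steps are direct computations or routine density/closedness arguments.
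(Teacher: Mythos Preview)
Your proposal is correct and follows essentially the same route as the paper's proof: both directions of $\cJ=\cM$ are obtained exactly as you describe (the paper also uses the $k^2$-term block decomposition of $p_nxp_n$ for the nontrivial inclusion), and the final statement is deduced from the inclusions $(A_1\otimes A_2)\cJ\subseteq\cJ$ and $\cJ(A_1\otimes A_2)\subseteq\cJ$ via the ideal property of $\cK(\cH_1)$. Your write-up is simply more explicit about the formula for $B_{ij}$ and the density/closedness reductions.
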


\begin{proof}  Let $\cJ'$ denote the closed linear span of all operators
$a=A_1\otimes A_2$ where $A_1\in\cK(\cH_1)$ and $A_2\in \cL(\cH_2)$.
The formula
\[
\lim_{n\to\infty}\norm A_1-P_n A_1P_n\norm =0
\]
implies
\[
\lim_{n\to\infty}\norm a-p_n ap_n\norm =0.
\]
We deduce that $a\in\cJ$ and hence that $\cJ'\subseteq \cJ$.
Conversely if $x\in\cJ_0$ then there exists $n\geq 1$ such that
$x=p_nxp_n$. If $P_n$ has rank $k$ then $p_nxp_n$ can be written as
the sum of $k^2$ terms of the form $A_1\otimes A_2$ where each $A_1$
has rank $1$. Hence $p_nxp_n\in\cJ'$. The inclusion $\cJ_0\subseteq
\cJ'$ implies $\cJ\subseteq \cJ'$. The final statement of the
theorem follows directly from the inclusions
\[
(A_1\otimes A_2)\cJ'\subseteq \cJ', \hspace{2em} %
\cJ' (A_1\otimes A_2)\subseteq \cJ'.
\]
\end{proof}

\section{Application to discrete Schr\"odinger operators\label{sectD}}

In this section we construct a \csa\ $\cA$ of $\cL(\cH)$ where
$\cH=l^2(\Z^d)$ by an ad hoc procedure. A more systematic approach
that uses a standard \ca\ is described in Section~\ref{sectC}.

We put $\cH_1=l^2(\Z)$ and $\cH_2=l^2(\Z^{d-1})$, so that
\begin{eq}
\cH\simeq \cH_1\otimes \cH_2\simeq l^2(\Z,\cH_2)\label{3tensor}
\end{eq}
by means of canonical unitary isomorphisms. We define the projections
$p_n$ by
\[
(p_n\phi)(x)=\begin{choices}
\phi(x)&\mbox{ if  } -n\leq x_1\leq n,\\
0&\mbox{ otherwise,}
\end{choices}
\]
for all $\phi\in \cH$ and $x\in \Z^d$. We also define the \ca\ $\cA$ and the ideal $\cJ$ as in Theorems~\ref{Jstart}~and~\ref{tensorJ}.
The ideal $\cJ$ contains all bounded operators on $\cH$ that are `concentrated' in some neighbourhood of the dislocation set $S=\{0\}\times \Z^{d-1}$. In Section~\ref{sectC} we explain how to generalize the ideas in this section by allowing the dislocation set to have a completely general shape.

\begin{lemma}
The \ca\ $\cA$ contains all `\Schrodinger operators' of the form
\begin{equation}
(A\phi)(x)=\sum_{r=1}^m a_r(x) \phi(x+b_r)\label{nsaschr}
\end{equation}
where $\phi\in l^2(\Z^d)$, $x\in\Z^d$, $m\in \Z_+$, $b_r\in \Z^d$
and $a_r\in l^\infty (\Z^d)$ for all $r\in \{ 1,2,\ldots,m\}$.
\end{lemma}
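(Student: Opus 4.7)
The plan is to exploit the linearity of the claim together with the multiplicative structure of $\cA$. Since $\cA$ is a $C^\ast$-algebra, it suffices to show that for every $a\in l^\infty(\Z^d)$ the multiplication operator $M_a$, defined by $(M_a\phi)(x)=a(x)\phi(x)$, lies in $\cA$, and that for every $b\in\Z^d$ the translation operator $T_b$, defined by $(T_b\phi)(x)=\phi(x+b)$, lies in $\cA$. Then each summand $M_{a_r}T_{b_r}$ in (\ref{nsaschr}) is a product of two elements of $\cA$, hence in $\cA$, and the full operator $A$ is a finite sum of such products.

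For the translation $T_b$, write $b=(b_1,b')\in \Z\times \Z^{d-1}$. Under the canonical isomorphism (\ref{3tensor}) we have the factorization $T_b=S_{b_1}\otimes T_{b'}$, where $S_{b_1}\in\cL(\cH_1)$ is the shift by $b_1$ on $l^2(\Z)$ and $T_{b'}\in\cL(\cH_2)$ is the shift by $b'$ on $l^2(\Z^{d-1})$. Theorem~\ref{tensorJ} tells us that every elementary tensor $A_1\otimes A_2$ with $A_i\in\cL(\cH_i)$ belongs to $\cA$, and so $T_b\in\cA$.

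For the multiplication operator $M_a$ one cannot use Theorem~\ref{tensorJ} directly, since $a$ need not factor. Instead I would argue directly from the definition (\ref{Adef1}). The key observation is that $p_n$ is itself a multiplication operator, by the characteristic function of $\{x\in\Z^d:|x_1|\leq n\}$, and multiplication operators on $l^2(\Z^d)$ mutually commute; hence $M_a p_n=p_n M_a$ for every $n$. Now given any $y\in\cJ_0$, choose $n$ with $p_n y=y p_n=y$. Then $p_n(M_a y)=M_a p_n y=M_a y$ and $(M_a y)p_n=M_a y p_n=M_a y$, so $M_a y\in \cJ_0\subseteq \cJ$; the same calculation on the other side shows $yM_a\in\cJ_0$. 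Passing to the norm closure gives $M_a\cJ\subseteq\cJ$ and $\cJ M_a\subseteq\cJ$, so $M_a\in\cA$.

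I do not expect any serious obstacle here; everything reduces to the two model computations above. The mild subtlety is the need to treat $T_b$ and $M_a$ by different methods, since $T_b$ does not commute with $p_n$ (so the direct argument used for $M_a$ fails) while $M_a$ is not of tensor-product form (so Theorem~\ref{tensorJ} is not directly applicable). Once both are in $\cA$, the algebraic closure of $\cA$ under products and finite sums finishes the proof.
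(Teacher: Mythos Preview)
Your argument is correct, but it takes a different route from the paper. The paper does not decompose $A$ into multiplications and translations; instead it treats $A$ as a single finite-range operator and verifies directly that $p_{n+k}Ap_n=Ap_n$ with $k=\max_r|b_r|$, which places $A$ in the set $\cD_0$ of Lemma~\ref{Adef3}. The same observation applies to $A^\ast$, so $A\in\cD_0\cap\cD_0^\ast\subseteq\cA$ by that lemma. This is shorter and uniform: one computation covers the entire operator, and it avoids invoking Theorem~\ref{tensorJ}. Your approach, by contrast, is more structural: it isolates the generators $M_a$ and $T_b$ and shows each lies in $\cA$, which is a useful fact in its own right and makes clear that $\cA$ contains the full algebra generated by multiplications and translations. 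Note, incidentally, that your two separate methods could both have been replaced by the paper's $\cD_0$ check: $M_a$ commutes with $p_n$ so $p_nM_ap_n=M_ap_n$, and $p_{n+|b_1|}T_bp_n=T_bp_n$, giving $M_a,T_b\in\cD_0\cap\cD_0^\ast$ immediately.
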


\begin{proof} An elementary calculation implies that $p_{n+k}Ap_n=Ap_n$ for
all $n\geq 1$ where $k=\max\{|b_r|:1\leq r\leq m\}$, so $A\in\cD_0$.
The same applies to $A^\ast$, so we may apply Lemma~\ref{Adef3}.
\end{proof}

We say that the \Schrodinger operator $A$ on $\cH$ is periodic in
the $\Z$ direction with period $k$ if $T_kA=AT_k$ where
$(T_k\phi)(m)=\phi(m+k)$ for all $\phi\in l^2(\Z,\cH_2)$. This holds
if and only if the coefficients $a_r,\, v$ are all periodic in the
the $\Z$ direction with period $k$.

\begin{theorem}\label{asymper}
If the \Schrodinger operator $A$ is periodic in the $\Z$ direction
with period $k$ then
\begin{equation}
\sig(\pi_\cJ(A))=\sigess(A)=\sig(A).\label{Aequal}
\end{equation}
If in addition $H=A+B+C$ where $B\in \cJ$ and $C\in \cK(\cH)$, then
\begin{equation}
\sigess(A)\subseteq \sigess(A+B)=\sigess(H)\subseteq
\sig(H).\label{Hcontain}
\end{equation}
\end{theorem}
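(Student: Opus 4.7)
Theorem~\ref{Jstart} already yields $\sig(\pi_\cJ(A))\sub\sigess(A)\sub\sig(A)$, so for~(\ref{Aequal}) only the reverse inclusion $\sig(A)\sub\sig(\pi_\cJ(A))$ requires work. I plan to exploit periodicity together with the contrapositive contained in the second paragraph of the proof of Theorem~\ref{localize}: if $\lam\in\sig(A)\backslash\sig(\pi_\cJ(A))$, then no approximate eigenvector of $A$ (or of $A^\ast$ with $\overline{\lam}$) can satisfy $\norm p_n\phi_r\norm\to 0$ for every $n\geq 1$. Producing such a sequence therefore forces $\lam\in\sig(\pi_\cJ(A))$.

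Given $\lam\in\sig(A)$, choose unit vectors $\phi_r$ satisfying either (\ref{localseq}) or (\ref{localseqst}); the two cases are handled in the same way and I work with the first. Since $A$ commutes with the unitary $T_k$, the translated vectors $\psi_r=T_k^{N_r}\phi_r$ are unit vectors with $\norm (A-\lam)\psi_r\norm=\norm (A-\lam)\phi_r\norm\to 0$, for any choice of integers $N_r$. For each fixed $r$ and $n$ one has $\norm p_n T_k^N\phi_r\norm\to 0$ as $N\to\infty$, since the translated mass of $\phi_r\in l^2(\Z^d)$ escapes the slab $|x_1|\leq n$. A diagonal choice of $N_r$ (large enough that $\norm p_n T_k^{N_r}\phi_r\norm<1/r$ for all $n\leq r$) therefore produces a sequence with $\norm p_n\psi_r\norm\to 0$ for every $n$. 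Inserting $\psi_r$ into the argument of Theorem~\ref{localize} supplies the required contradiction. For the adjoint alternative, I use the routine fact that $A^\ast$ is again periodic in the $\Z$ direction with period $k$, obtained from $T_kA=AT_k$ by taking adjoints and using the unitarity of $T_k$.

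For~(\ref{Hcontain}), since $B\in\cJ$ one has $\pi_\cJ(A+B)=\pi_\cJ(A)$, so $\sig(\pi_\cJ(A+B))=\sig(\pi_\cJ(A))=\sigess(A)$ by~(\ref{Aequal}); combined with Lemma~\ref{kideals} applied to $\cK(\cH)\sub\cJ$, this yields $\sigess(A)\sub\sigess(A+B)$. The equality $\sigess(A+B)=\sigess(H)$ is the standard invariance of the Calkin-algebra essential spectrum under the compact perturbation $C$, and $\sigess(H)\sub\sig(H)$ is immediate. The genuine obstacle lies entirely in the first part, specifically in engineering the translated approximate eigenvectors $\psi_r$ so as to trigger Theorem~\ref{localize} in contrapositive form; the second part is then purely algebraic bookkeeping with ideals.
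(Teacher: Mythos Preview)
Your proposal is correct and follows essentially the same approach as the paper: translate approximate eigenvectors by powers of $T_k$ so that $\norm p_n\psi_r\norm\to 0$ for every $n$, then invoke the argument of Theorem~\ref{localize} to force $\lam\in\sig(\pi_\cJ(A))$; the second part is likewise reduced to Lemma~\ref{kideals} via $\pi_\cJ(H)=\pi_\cJ(A)$ and invariance of $\sigess$ under compact perturbations. You are simply more explicit about the diagonal choice of $N_r$ and the adjoint case than the paper, which compresses these into the phrase ``by translating the $\phi_r$ sufficiently.''
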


\begin{proof} The identities in (\ref{Aequal}) follow directly from
Lemma~\ref{kideals} provided we can prove that $\sig(A)\subseteq
\sig(\pi_\cJ(A))$. If $\lam\in \sig(A)$ then there exists a sequence
$\{\phi_r\}_{r=1}^\infty$ of unit vectors such that either
$\lim_{r\to\infty} \norm A\phi_r-\lam \phi_r\norm =0$ or
$\lim_{r\to\infty} \norm A^\ast\phi_r-\overline{\lam} \phi_r\norm
=0$; see \cite[Lemma~1.2.13]{LOTS}. Both cases are similar, so we
only consider the first.

By translating the $\phi_r$ sufficiently and using the translation
invariance of $A$, we see that there exists a sequence
$\{\psi_r\}_{r=1}^\infty$ of unit vectors such that
$\lim_{r\to\infty} \norm A\psi_r-\lam \psi_r\norm =0$ and
$\lim_{r\to\infty}\norm p_n\psi_r\norm=0$ for every $n$. The
argument of Theorem~\ref{localize} establishes that $\lam\in
\sig(\pi_\cJ(A))$ and hence that $\sig(A)\subseteq
\sig(\pi_\cJ(A))$.

The statements in (\ref{Hcontain}) now follow from
Lemma~\ref{kideals} as soon as one observes that
$\sig(\pi_\cJ(H))=\sig(\pi_\cJ(A))$ and
$\sig(\pi_{\cK(\cH)}(H))=\sig(\pi_{\cK(\cH)}(A+B))$.
\end{proof}

The following theorem identifies the asymptotic part of the spectrum
of certain \Schrodinger operators $H$ as $x_1\to -\infty$. The
operators concerned have much in common with those of \cite{DS}, but
we allow them to be non-self-adjoint and require the underlying
space to be discrete.

\begin{theorem}\label{line} Let $S=\{x\in \Z^d:x_1\geq 0\}$ and put
\[
(p_n\phi)(x)=\left\{ \begin{array}{ll}
\phi(x)&\mbox{ if } x_1\geq -n,\\
0&\mbox{ otherwise,}
\end{array}
\right.
\]
for all $\phi\in l^2(\Z^d)$ and $n\geq 0$. Let $A$ be of the form
(\ref{nsaschr}) and suppose that it is periodic in the $x_1$
direction. Also let $H=A+B$ where $B$ is any bounded operator
confined to $S$ in the sense that $p_0B=Bp_0=B$. If $\cJ$ is defined
as in Theorem~\ref{Jstart} then
\[
\sig(A)=\sigess(A)=\sig(\pi_\cJ(H))\subseteq \sigess(H)\subseteq
\sig(H).
\]
\end{theorem}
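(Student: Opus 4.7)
The plan is to reduce the entire chain to a single non-trivial inclusion. First, the hypothesis $p_0 B p_0 = B$ implies $p_n B = B p_n = B$ for all $n \geq 1$ (using $p_0 \leq p_n$), so $B \in \cJ_0 \subseteq \cJ$ and hence $\pi_\cJ(H) = \pi_\cJ(A)$. Second, $p_n \to I$ strongly implies $\|K - p_n K p_n\| \to 0$ for every compact $K$, so $\cK(\cH) \subseteq \cJ$. Lemma~\ref{kideals} then gives
\[
\sig(\pi_\cJ(H)) \subseteq \sigess(H) \subseteq \sig(H)
\]
and also $\sig(\pi_\cJ(A)) \subseteq \sigess(A) \subseteq \sig(A)$. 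The full statement of the theorem therefore follows once the reverse inclusion $\sig(A) \subseteq \sig(\pi_\cJ(A))$ is established.

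For that key step I would adapt the translation argument from the proof of Theorem~\ref{asymper} to the present, one-sided projections. Given $\lam \in \sig(A)$, \cite[Lemma~1.2.13]{LOTS} produces unit vectors $\{\phi_r\}_{r=1}^\infty$ with $\|A\phi_r - \lam \phi_r\| \to 0$; the adjoint case $\|A^\ast \phi_r - \overline{\lam}\phi_r\| \to 0$ is handled identically. Set $\psi_r = T_{-t_r k}\phi_r$, where $T_j$ denotes translation by $j$ in the $x_1$-direction and $t_r \in \N$ is to be chosen. Since $A$ has period $k$ in the $x_1$-direction, $T_{-t_r k}$ commutes with $A$, so
\[
\|A\psi_r - \lam\psi_r\| = \|A\phi_r - \lam\phi_r\| \to 0.
\]
For each fixed $r$, $\|p_r \psi_r\|^2$ equals the $l^2$-mass of $\phi_r$ in the half-space $\{x_1 \geq t_r k - r\}$, which tends to $0$ as $t_r \to \infty$. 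A diagonal choice of $t_r$ ensures $\|p_r \psi_r\| < 1/r$, and the monotonicity $p_n \leq p_r$ for $n \leq r$ then forces $\lim_{r\to\infty}\|p_n \psi_r\| = 0$ for every fixed $n$.

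The proof now closes via the contradiction argument of Theorem~\ref{localize}. If $\lam$ lay outside $\sig(\pi_\cJ(A))$, there would exist $y \in \cA$ and $a \in \cJ$ with $y(\lam 1 - A) = 1 + a$; Lemma~\ref{weaklim} would then give
\[
1 = \lim_{r\to\infty}\|(1+a)\psi_r\| \leq \|y\|\cdot\lim_{r\to\infty}\|(\lam 1 - A)\psi_r\| = 0,
\]
which is absurd. Hence $\lam \in \sig(\pi_\cJ(A)) = \sig(\pi_\cJ(H))$, as required.

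The main obstacle I anticipate is that a Weyl sequence for $A$ may be arbitrarily delocalized along the $x_1$-axis, so no uniform translation amount suffices to push every $\phi_r$ outside the support of a given $p_n$; the diagonal choice of $t_r$, combined with monotonicity of $\{p_n\}$, is precisely what circumvents this. It is equally important that $t_r k$ be an integer multiple of the period $k$, since otherwise $T_{-t_r k}$ would fail to commute with $A$ and the approximate-eigenvector relation would be destroyed.
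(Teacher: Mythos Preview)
Your proposal is correct and follows essentially the same approach the paper intends: the paper omits the proof of Theorem~\ref{line} and simply says it is ``similar to that of Theorem~\ref{asymper} and uses the fact that $B\in\cJ$'', which is precisely the strategy you carry out. Your treatment is in fact more explicit than the paper's sketch, since you spell out the diagonal choice of $t_r$ needed to push the Weyl sequence past the one-sided projections; the only minor remark is that your convention for $T_j$ is the opposite of the paper's $(T_k\phi)(m)=\phi(m+k)$, but your computation is internally consistent and the conclusion is unaffected.
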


We omit the proof, which is similar to that of Theorem~\ref{asymper}
and uses the fact that $B\in\cJ$.

We finally come to an application that involves two different closed
ideals. Let $H=A+V_1+V_2$ where $A$ acts on $\cH=l^2(\Z^2)$, is of
the form (\ref{nsaschr}) and is periodic in both horizontal and
vertical directions. We assume that the bounded potential $V_1$ has
support in $\Z\times [-a_2,a_2]$ while the bounded potential $V_2$
has support in $[-a_1,a_1]\times\Z $ for some finite $a_1,\, a_2$.
Let $\cJ_1$ be the ideal associated with the sequence of projections
\[
(p_n\phi)(i,j)=\left\{ \begin{array}{ll}
\phi(i,j)&\mbox{ if } -n\leq i\leq n,\\
0&\mbox{ otherwise,}
\end{array}
\right.
\]
and let $\cJ_2$ be the ideal associated with the sequence of
projections
\[
(q_n\phi)(i,j)=\left\{ \begin{array}{ll}
\phi(i,j)&\mbox{ if } -n\leq j\leq n,\\
0&\mbox{ otherwise.}
\end{array}
\right.
\]
The appropriate \ca\ $\cA$ is defined by
\[
\cA=\{ x\in\cL(\cH):x\cJ_1\subseteq \cJ_1, \, \cJ_1x\subseteq \cJ_1,
\,x\cJ_2\subseteq \cJ_2, \,\cJ_2x\subseteq \cJ_2\}.
\]
\begin{theorem}\label{cross} Under the above assumptions $H\in \cA$ and
\[
\sigess(H)=\sig_1(A+V_1)\cup\sig_2(A+V_2).
\]
If $V_1$ is periodic in the $x_1$ direction and $V_2$ is periodic in
the $x_2$ direction then
\[
\sigess(H)=\sig(A+V_1)\cup\sig(A+V_2).
\]
\end{theorem}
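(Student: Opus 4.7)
The plan is to apply Theorem~\ref{J1J2} to the pair $\cJ_1,\cJ_2$ and identify their intersection with $\cK(\cH)$. Two preparatory facts are needed: (i) $H\in\cA$, and (ii) $\cJ_1\cap\cJ_2=\cK(\cH)$. Given these, the first assertion is essentially immediate from Theorems~\ref{J1J2} and~\ref{Jstart}, and the second follows by applying Theorem~\ref{asymper} once along each coordinate direction.

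For (i), the Schr\"odinger-operator lemma immediately preceding Theorem~\ref{asymper} gives $A\cJ_i\subseteq\cJ_i$ and $\cJ_iA\subseteq\cJ_i$ for both $i=1,2$, by applying it once along each axis. For $V_2$: since $V_2$ is multiplication supported in $\{|x_1|\leq a_1\}$, one has $p_{a_1}V_2=V_2p_{a_1}=V_2$, which places $V_2$ in $\cJ_1$ and gives two-sided $\cJ_1$-invariance for free. Compatibility with $\cJ_2$ comes from the fact that $V_2$ commutes with every $q_n$ (both being multiplication operators), so $V_2$ preserves the defining condition $q_nxq_n=x$; a density argument then yields $V_2\cJ_2\subseteq\cJ_2$ and $\cJ_2V_2\subseteq\cJ_2$. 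The argument for $V_1$ is symmetric, so $H\in\cA$. For (ii), the inclusion $\cK(\cH)\subseteq\cJ_i$ is the last part of Theorem~\ref{Jstart} since $p_n,q_n\to I$ strongly. Conversely, if $x\in\cJ_1\cap\cJ_2$, then~(\ref{J0closure}) gives $\norm x-p_nxp_n\norm\to 0$ and $\norm x-q_nxq_n\norm\to 0$; combining these (using $\norm p_n\norm=\norm q_n\norm=1$ and $p_nq_n=q_np_n$) yields $\norm x-p_nq_nxq_np_n\norm\to 0$, and the approximants are finite rank, so $x\in\cK(\cH)$.

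Theorem~\ref{J1J2} now gives $\sigess(H)=\sig_1(H)\cup\sig_2(H)$. Since $V_2\in\cJ_1$ we have $\pi_1(H)=\pi_1(A+V_1)$, and since $V_1\in\cJ_2$ we have $\pi_2(H)=\pi_2(A+V_2)$, so the first assertion follows. For the second, $A+V_j$ is still of the form~(\ref{nsaschr}), and $A$ being periodic in both directions together with $V_j$ being periodic in $x_j$ makes $A+V_j$ periodic in $x_j$; Theorem~\ref{asymper}, with $x_j$ taking the role of the distinguished $\Z$-direction and $\cJ_j$ that of $\cJ$, then gives $\sig_j(A+V_j)=\sig(A+V_j)$. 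The principal obstacle is the combined verification of (i) and (ii) above: both depend on the various commutation relations between the projections $p_n$, $q_n$ and the multiplication operators $V_1,V_2$, which all hold in the present setting because every one of these objects acts diagonally in the standard basis of $l^2(\Z^2)$.
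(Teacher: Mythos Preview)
Your proof is correct and follows essentially the same approach as the paper: reduce to Theorem~\ref{J1J2} via the identification $\cJ_1\cap\cJ_2=\cK(\cH)$, use $V_2\in\cJ_1$ and $V_1\in\cJ_2$ to simplify $\sig_i(H)$, and invoke Theorem~\ref{asymper} for the periodic case. The only cosmetic difference is that you take a single diagonal limit $\norm x-p_nq_nxq_np_n\norm\to 0$ whereas the paper does the iterated limit (first $n\to\infty$, then $m\to\infty$); you also spell out the verification of $H\in\cA$ more explicitly than the paper does.
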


\begin{proof} Since $V_2\in\cJ_1$, we have $\sig_1(K)=\sig_1(H+V_1)$. Since
$V_1\in\cJ_2$, we have $\sig_2(K)=\sig_2(H+V_2)$. In order to apply
Theorem~\ref{J1J2} we need to prove that $\sig_3(K)=\sigess(K)$.
This follows if $\cJ_1\cap\cJ_2=\cK(\cH)$. The only non-trivial part
is to prove that if $x\in\cJ_1\cap\cJ_2$ then $x\in\cK(\cH)$.

Given such an $x$ put $x_{m,n}=p_m q_n x q_n p_m$ for all $m,\,
n\geq 1$. Noting that $p_m$ and $q_n$ commute and that their product
is of finite rank we see that $x_{m,n}\in\cK(\cH)$ for all $m,\, n$.
Since $x\in\cJ_2$ we have
\[
\lim_{n\to\infty}x_{m,n} =p_mxp_m
\]
and since $x\in\cJ_1$ we have
\[
\lim_{m\to\infty}p_mxp_m =x.
\]
Therefore $x\in\cK(\cH)$.

The final statement of the theorem involves an application of
Theorem~\ref{asymper}.
\end{proof}

\section{The standard \ca\ \label{sectC}}

If $\cA=\cL(\cH)$ for some infinite-dimensional, separable Hilbert space $\cH$ then $\cA$ contains only one non-trivial ideal, namely $\cK(\cH)$. In this section we construct a `slightly smaller' \ca\ which has a rich ideal structure. We formulate the theory in a general setting, even though our main applications are to $\Z^d$ and $\R^d$. However, exactly the same construction may be used for unbounded graphs and for waveguides, in which $X$ is an unbounded region in $\R^d$. In Section~\ref{hyperb} we show that it may also be applied to \Schrodinger operators on Riemannian manifolds, writing out the details in the case of three-dimensional hyperbolic space.

Let $(X,d,\mu)$ denote a space $X$ provided with a metric $d$ and a measure $\mu$; we require $X$ to be a complete separable metric space with infinite diameter in which every closed ball is compact; all balls in this paper are taken to have positive and finite radius. We also require that the measure of every open ball $B(a,r)=\{x\in X: d(x,a)<r\}$ is positive and finite. Let $\cU$ denote the class of all non-empty, open subsets of $X$.

If $S,T\subseteq X$ we put
\[
d(S,T)=\inf\{d(s,t):s\in S \mbox{ and } t\in T\}.
\]
The function $x\to d(x,S)$ is continuous on $X$; indeed
\[
|d(x,S)-d(y,S)|\leq d(x,y)\]
for all $x,\, y \in X$ and $S\subseteq X$. If $(X,d)$ is a length space in the sense of Gromov then
\[
\ol{B(a,r)}=\{ x\in X:d(x,a)\leq r\}
\]
and
\[
d(B(a,r),B(b,s))=\max \{ d(a,b)-r-s,0 \}.
\]
for all $a,\, b\in X$ and $r,\, s >0$. However, if $X=\Z^d$ with the Euclidean metric, neither of these identities need hold.

Now put $\cH=L^2(X,\mu)$. For any $S\in \cU$ we define the projection $P_S$ on $\cH$ by
\[
(P_S\phi)(x)=\begin{choices}
\phi(x)&\mbox{ if $x\in S$,}\\
0&\mbox{otherwise.}
\end{choices}
\]
We abbreviate $P_{B(a,r)}$ to $P_{a,r}$.

\begin{lemma}\label{lindelof}
If $A \in \cL(\cH)$ then there exists a largest open set $U$ such that $AP_U=0$. There also exists a largest open set $V$ such that $P_VA=0$.
\end{lemma}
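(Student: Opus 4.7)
The natural candidate for $U$ is the union of the family $\cF = \{W \in \cU : AP_W = 0\}$ together with $\emptyset$. This union is open by construction, and by definition contains every member of $\cF$; the only real issue is to show that $U$ itself belongs to $\cF$, i.e.\ that $AP_U = 0$.

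The first step is the finite-union closure property: if $AP_S = 0$ and $AP_T = 0$ then $AP_{S \cup T} = 0$. This follows because any $\phi \in \Ran(P_{S \cup T})$ splits as $\phi \cdot \chi_S + \phi \cdot \chi_{T \setminus S}$, where the first summand lies in $\Ran(P_S)$ and the second in $\Ran(P_T)$, so $A$ annihilates both. An easy induction extends this to any finite union.

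The second step handles the (possibly uncountable) union. Since $(X,d)$ is a separable metric space it is second countable, hence hereditarily Lindel\"of, so the open cover $\{W : W \in \cF\}$ of the open set $U$ admits a countable subcover $\{W_n\}_{n=1}^\infty$. Setting $V_n = W_1 \cup \cdots \cup W_n$ gives an increasing sequence of open sets with $\bigcup_n V_n = U$ and, by the first step, $AP_{V_n}=0$ for every $n$. Because $V_n \uparrow U$ we have $P_{V_n}\phi \to P_U \phi$ in norm for each $\phi \in \cH$ (monotone convergence applied to $|\phi|^2$), so $AP_U\phi = \lim_{n\to\infty} AP_{V_n}\phi = 0$. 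Thus $U \in \cF$ and is the largest such open set.

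For the second assertion one applies the first to $A^\ast$: taking adjoints, $P_V A = 0$ is equivalent to $A^\ast P_V = 0$, since $P_V$ is self-adjoint. Hence the largest open set $V$ with $P_V A = 0$ is precisely the largest open set produced by the first part of the lemma when applied to $A^\ast$. The only step that requires any real care is the passage from finite to arbitrary unions, where one must invoke the Lindel\"of property of $X$ to reduce to a monotone sequence before using strong convergence of the projections.
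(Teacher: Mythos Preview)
Your proof is correct and follows essentially the same route as the paper: define the family of open sets on which the projection annihilates $A$, take their union, reduce to a countable subfamily via Lindel\"of, pass to an increasing sequence of finite unions, and use convergence of the projections. The only cosmetic differences are that the paper writes the finite-union step as the operator identity $AP_{W_{n+1}}=AP_{W_n}+AP_{V_{n+1}}(1-P_{W_n})$ and invokes weak rather than strong convergence, and handles the second statement by saying ``similar proof'' rather than your slightly cleaner adjoint trick.
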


\begin{proof} If $\cV$ is the class of all open sets $V$ such that $AP_V=0$ then the only candidate for $U$ is $U=\bigcup_{V\in\cV} V$ and by Lindel\"of's theorem we may also write $U=\bigcup_{n=1}^\infty V_n$ where $V_n$ is a sequence of sets in $\cV$. Put $W_1=V_1$ and $W_{n+1}=W_n\cup V_{n+1}$. If $W_n\in\cV$ then
\[
AP_{W_{n+1}}=AP_{W_n}+AP_{V_{n+1}}(1-P_{W_{n}})=0,
\]
so $W_{n+1}\in\cV$. It follows by induction that $AP_{W_n}=0$ for all $n\geq 1$. Now $P_{W_n}$ is an increasing sequence of projections that converges weakly to $P_U$ so $AP_U=0$. The second statement of the lemma has a similar proof.
\end{proof}

\begin{lemma}\label{epsball}
If $A,\, B \in \cL(\cH)$ and $AB\not= 0$ then for every $\eps >0$ there exists $a\in X$ such that $AP_{a,\eps}\not= 0$ and $P_{a,\eps}B\not= 0$.
\end{lemma}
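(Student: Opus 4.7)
The plan is to argue by contradiction, combining Lemma~\ref{lindelof} with the elementary identity $P_SP_T=P_{S\cap T}$ for the multiplication projections.

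Suppose no such $a$ exists, so that for every $a\in X$ either $AP_{a,\eps}=0$ or $P_{a,\eps}B=0$. Let $U$ be the largest open set with $AP_U=0$ and $V$ the largest open set with $P_VB=0$, both furnished by Lemma~\ref{lindelof}. If $AP_{a,\eps}=0$, then $B(a,\eps)$ is itself an open set annihilating $A$ on the right, hence $B(a,\eps)\subseteq U$ and in particular $a\in U$; similarly $P_{a,\eps}B=0$ forces $a\in V$. Since $a\in B(a,\eps)$ in either case, the contradiction hypothesis yields the genuine pointwise covering $X=U\cup V$.

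The next step is to decompose the identity as $I=P_U+P_{X\setminus U}$, which is legitimate as an operator identity because $U$ and $X\setminus U$ partition $X$ as sets (no measure-zero ambiguity arises). Since $AP_U=0$, this gives $AB=AP_{X\setminus U}B$. Because $X\setminus U\subseteq V$, one has $P_{X\setminus U}=P_{X\setminus U}P_V$, and so
\[
AB=AP_{X\setminus U}P_VB=0,
\]
contradicting $AB\neq 0$.

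The only subtle point I anticipate is ensuring that $X=U\cup V$ is a genuine pointwise cover rather than a cover modulo a null set, since otherwise the decomposition $I=P_U+P_{X\setminus U}$ would not hold exactly. This is exactly where the choice $B(a,\eps)$ is doing work: the hypothesis attaches an open ball to each point $a$, and Lemma~\ref{lindelof} converts a ball-by-ball annihilation statement into containment of that ball in $U$ (or $V$), hence pointwise membership of $a$. Once this is in place the rest is routine manipulation of the commuting projections $P_U,P_V$.
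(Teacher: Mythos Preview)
Your proof is correct, but it takes a genuinely different route from the paper. The paper argues constructively: it fixes a countable dense set $\{a_n\}$, builds a disjoint measurable partition $E_n\subseteq B(a_n,\eps)$ of $X$, uses weak convergence $\sum_r P_{E_r}\to I$ to conclude that some term $AP_{E_n}B\neq 0$, and then enlarges $E_n$ to $B(a_n,\eps)$. Your argument instead runs by contradiction through Lemma~\ref{lindelof}: if every ball fails one of the two conditions, the maximal annihilating open sets $U$ and $V$ together cover $X$ pointwise, and the decomposition $I=P_U+P_{X\setminus U}$ forces $AB=0$. Your approach is arguably tidier and makes good use of the lemma that immediately precedes this one, which the paper's own proof does not invoke; the paper's version is more self-contained and delivers an explicit witness $a=a_n$ from a fixed dense set. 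One cosmetic point: the paper only defines $P_S$ for open $S$, so strictly speaking $P_{X\setminus U}$ is outside that convention, but the extension to arbitrary measurable sets is immediate and causes no difficulty.
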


\begin{proof} Let $\{a_n\}_{n=1}^\infty$ be a countable dense set in $X$ and define the sets $E_N$ inductively by
$E_1=B(a_1,\eps)$ and
\[
E_{n+1}=B(a_{n+1},\eps)\backslash (E_1\cup\ldots E_n).
\]
It follows directly that the sets $E_n$ are disjoint and that their union is $X$. Therefore
\[
\lim_{n\to\infty} \sum_{r=1}^n P_{E_r} =I,
\]
the limit being in the weak operator topology. Therefore
\[
\lim_{n\to\infty} \sum_{r=1}^n AP_{E_r}B =AB\not= 0
\]
in the same sense and there must exist $n$ such that $AP_{E_n}B\not= 0$. We conclude first that $AP_{E_n}\not= 0$ and $P_{E_n}B\not= 0$ and then that $AP_{a_n,\eps}\not= 0$ and $P_{a_n,\eps}B\not= 0$.
\end{proof}

We say that $A\in \cL(\cH)$ lies in $\cA_m$ if $P_{a,r}AP_{b,s}\not= 0$ implies $d(a,b)\leq r+s+m$. If $A$ has an integral kernel $K$ this amounts to requiring that $K(x,y)\not= 0$ implies $d(x,y)\leq m$, but we do not require that $A$ has such a kernel.

\begin{lemma}\label{Adeflemma} If $A\in \cA_m$ and $B\in \cA_n$ then $A^\ast\in \cA_m$, $A+B\in \cA_{\max(m,n)}$ and $AB\in \cA_{m+n}$.
\end{lemma}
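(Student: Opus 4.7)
The three statements should be proved in increasing order of difficulty. The adjoint and sum cases follow essentially from the definition of $\cA_m$, while the product case requires the non-trivial localization provided by Lemma~\ref{epsball}.

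For the adjoint, the plan is to observe that $P_{a,r}A^\ast P_{b,s}\not=0$ is equivalent, by taking adjoints, to $P_{b,s}AP_{a,r}\not= 0$. Since $A\in \cA_m$, this yields $d(b,a)\leq s+r+m$, and since $d$ is symmetric the conclusion $d(a,b)\leq r+s+m$ is immediate, giving $A^\ast\in \cA_m$.

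For the sum, suppose $P_{a,r}(A+B)P_{b,s}\not= 0$. Then at least one of $P_{a,r}AP_{b,s}$ and $P_{a,r}BP_{b,s}$ is non-zero, yielding $d(a,b)\leq r+s+m$ or $d(a,b)\leq r+s+n$ respectively; in either case $d(a,b)\leq r+s+\max(m,n)$, so $A+B\in \cA_{\max(m,n)}$.

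The product case is the heart of the lemma. Suppose $P_{a,r}ABP_{b,s}\not= 0$, so that the operators $A'=P_{a,r}A$ and $B'=BP_{b,s}$ satisfy $A'B'\not= 0$. For an arbitrary $\eps>0$ I would apply Lemma~\ref{epsball} to the pair $(A',B')$ to obtain a point $c\in X$ with $P_{a,r}AP_{c,\eps}\not= 0$ and $P_{c,\eps}BP_{b,s}\not= 0$. The hypotheses $A\in\cA_m$ and $B\in \cA_n$ then give $d(a,c)\leq r+\eps+m$ and $d(c,b)\leq \eps+s+n$, and the triangle inequality yields
\[
d(a,b)\leq d(a,c)+d(c,b)\leq r+s+m+n+2\eps.
\]
Letting $\eps\to 0$ gives $d(a,b)\leq r+s+(m+n)$, so $AB\in \cA_{m+n}$. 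The only subtlety is that Lemma~\ref{epsball} only guarantees the existence of an $\eps$-ball witness for the product, not an exact splitting, and this is precisely why the extra slack of $2\eps$ appears and has to be absorbed by taking $\eps$ arbitrarily small.
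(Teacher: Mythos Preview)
Your proof is correct and matches the paper's own argument essentially line for line: the adjoint case is handled by the symmetry of the definition, the sum by case-splitting, and the product by invoking Lemma~\ref{epsball} to insert an $\eps$-ball, using the triangle inequality, and letting $\eps\to 0$. The only difference is that you spell out the adjoint step explicitly while the paper simply asserts it follows from the definition.
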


\begin{proof} The invariance of $\cA_m$ under adjoints follows immediately from its definition.

If $P_{a,r}(A+B)P_{b,s}\not= 0$ then $P_{a,r}AP_{b,s}\not= 0$ or $P_{a,r}BP_{b,s}\not= 0$. Therefore $d(a,b)\leq r+s+m$ or $d(a,b)\leq r+s+n$. In both cases we deduce that $d(a,b)\leq r+s+\max(m,n)$.

If $P_{a,r}ABP_{b,s}\not= 0$ then Lemma~\ref{epsball} implies that for every $\eps >0$ there exists $c\in X$ such that $P_{a,r}AP_{c,\eps}\not= 0$ and $P_{c,\eps}BP_{b,s}\not= 0$. Therefore $d(a,c)\leq r+\eps +m$ and $d(c,b)\leq \eps+s+n$. These imply that $d(a,b)\leq r+s+m+n+2\eps$. Letting $\eps\to 0$ we finally deduce that $AB\in \cA_{m+n}$.
\end{proof}

We will frequently refer to the standard \ca\ $\cA$ below; this is defined in the next theorem.

\begin{theorem}\label{newAdef} If $\cA$ is the norm closure of $\ti{\cA}=\bigcup_{n=0}^\infty\cA_n$ then $\cA$ is a \csa\ of $\cL(\cH)$. If $V\in L^\infty(X,\mu)$ and $V$ also denotes the operator of multiplication by the function $V$, then $V\in \cA$. Moreover $\cK(\cH)\subseteq \cA$.
\end{theorem}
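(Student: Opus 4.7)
The plan is to address the three claims in succession, making direct use of Lemma~\ref{Adeflemma}.

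For the first claim, by Lemma~\ref{Adeflemma} the collection $\tilde{\cA} = \bigcup_n \cA_n$ is closed under adjoints, finite sums, and products (and clearly under scalar multiplication, since $\lam A \in \cA_m$ whenever $A \in \cA_m$), so $\tilde{\cA}$ is a $*$-subalgebra of $\cL(\cH)$. Its norm closure is therefore a \csa, since each of these operations is continuous in the operator norm.

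For the second claim I would verify that any multiplication operator $V$ already lies in $\cA_0$. The point is that $V$ commutes with every projection $P_S$ (both being multiplication operators), so $P_{a,r} V P_{b,s} = V P_{B(a,r)\cap B(b,s)}$. If this is nonzero, the intersection $B(a,r)\cap B(b,s)$ has positive measure, and in particular contains a point $x$ with $d(a,x)<r$ and $d(x,b)<s$; the triangle inequality then gives $d(a,b) < r + s$, so $V \in \cA_0 \subseteq \cA$.

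For the third claim, since $\cA$ is norm-closed and the finite-rank operators are norm-dense in $\cK(\cH)$, it suffices to show that every rank-one operator $T = \phi\otimes \overline{\psi}$ lies in $\cA$. First I would handle the case in which $\phi$ and $\psi$ have supports contained in closed balls $\ol{B(a,R)}$ and $\ol{B(b,R)}$ respectively. In that case $P_{c,r} T P_{d,s} = (P_{c,r}\phi)\otimes \overline{(P_{d,s}\psi)}$, which is nonzero only if $B(c,r)$ meets $\supp(\phi)$ and $B(d,s)$ meets $\supp(\psi)$ in positive measure. Choosing points in these intersections and using the triangle inequality gives $d(c,d) \leq r + s + m$ with $m = 2R + d(a,b)$, so such a $T$ lies in $\cA_m \subseteq \tilde{\cA}$. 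For general $\phi, \psi \in \cH = L^2(X,\mu)$, I would approximate them in $L^2$-norm by compactly supported (equivalently, ball-supported) functions $\phi_n, \psi_n$; this is possible because $X$ is $\sigma$-compact (a countable union of balls of finite measure, by separability and local compactness). The rank-one operators $\phi_n \otimes \overline{\psi_n}$ then lie in $\tilde{\cA}$ and converge to $T$ in operator norm, so $T \in \cA$.

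I expect no serious obstacle: the algebraic closure properties come free from Lemma~\ref{Adeflemma}, the multiplication-operator case reduces to a one-line triangle-inequality argument, and the rank-one case hinges only on the routine density of ball-supported $L^2$ functions in $\cH$. The only place requiring mild care is identifying the correct $m$ in the ball-supported case and verifying that $\phi_n \otimes \overline{\psi_n} \to \phi \otimes \overline{\psi}$ in operator norm, but both follow from standard estimates.
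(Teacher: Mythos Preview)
Your argument is correct and tracks the paper's proof almost exactly: parts one and two are identical to the paper's reasoning, and for part three you reach the same conclusion via rank-one operators with ball-supported factors, whereas the paper instead shows that any compact $A$ satisfying $A=AP_U=P_UA$ for a bounded open $U$ of diameter $n$ lies in $\cA_n$ and that such operators are norm dense in $\cK(\cH)$. Both routes amount to exhibiting a norm-dense family of spatially localized compacts inside $\tilde{\cA}$, so the difference is cosmetic.
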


\begin{proof} The first statement follows directly from Lemma~\ref{Adeflemma}. If $P_{a,r}VP_{b,s}\not= 0$ then $P_{a,r}P_{b,s}V\not= 0$ and hence $P_{a,r}P_{b,s}\not= 0$. Therefore the open set $U=B(a,r)\cap B(b,s)$ is not empty, and there exists $c\in X$ with $d(a,c)<r$ and $d(b,c)<s$. Therefore $d(a,b)<r+s\leq r+s+0$ and $V\in \cA_0$.

If $A$ is compact and $A=AP_U=P_UA$ for some open set $U$ with diameter $n$ then $P_{a,r}AP_{b,s}\not= 0$ implies $P_{a,r}P_UAP_UP_{b,s}\not= 0$ and hence $P_{a,r}P_U\not= 0$ and $P_UP_{b,s}\not= 0$. Hence there exist $u,\, v\in U$ such that $d(a,u)<r$ and $d(v,b)<s$. We deduce that $d(a,b)<r+s+n$ so $A\in \cA_n$. Since the set of all such $A$ is norm dense in $\cK(\cH)$, we conclude that $\cK(\cH)\subseteq \cA$.
\end{proof}

The following alternative definition of $\cA$ is slightly more transparent in spite of the fact that it quantifies over a much larger class of sets.

\begin{theorem} Given $m\geq 1$, let $\cY_m$ denote the set of all $A\in \cL(\cH)$ such that for every $S\in \cU$ one has $AP_S=P_{S(m)}AP_S$. Then $\cA$ is the norm closure of $\bigcup_{m=0}^\infty\cY_m$.
\end{theorem}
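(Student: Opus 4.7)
Proof plan.

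Since $\cA$ is by definition the norm closure of $\bigcup_{n\ge 0}\cA_n$, the task reduces to showing that $\bigcup_n \cA_n$ and $\bigcup_m \cY_m$ have the same norm closure; I will establish this via the two inclusions $\cY_m \subseteq \cA_m$ and $\cA_n \subseteq \cY_n$. (Interpreting $S(m):=\{x:d(x,S)\le m\}$ the indices match as written; if $S(m)$ is instead the open $m$-neighbourhood a harmless shift by $1$ in the index is needed, which does not affect the closure.)

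The inclusion $\cY_m \subseteq \cA_m$ is routine. Given $A \in \cY_m$ with $P_{a,r}AP_{b,s}\neq 0$, apply the defining identity with $S = B(b,s)$ to rewrite $P_{a,r}AP_{b,s} = P_{a,r}P_{S(m)}AP_{b,s}$. Non-vanishing forces $B(a,r)\cap S(m)\neq\emptyset$; picking a point of the intersection and a companion in $B(b,s)$ within distance $m$ of it, the triangle inequality yields $d(a,b)\le r+s+m$, so $A\in\cA_m$.

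For the reverse inclusion $\cA_n \subseteq \cY_n$, Lemma~\ref{epsball} does the real work. Given $A\in\cA_n$ and $S\in\cU$ I need $(I-P_{S(n)})AP_S = 0$. Setting $T_m=\{x:d(x,S)>m\}$, it suffices to prove $P_{T_m}AP_S = 0$ for every $m>n$, since the corresponding projections increase strongly to $I-P_{S(n)}$ as $m\searrow n$. Suppose, for contradiction, $P_{T_m}AP_S\neq 0$ for some $m>n$. Fix $\eps>0$ and apply Lemma~\ref{epsball} to $(P_{T_m}, AP_S)$ to obtain a point $c\in X$ with $P_{T_m}P_{c,\eps}\neq 0$ (so $d(c,S)>m-\eps$) and $P_{c,\eps}AP_S\neq 0$. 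For arbitrary $\delta>0$, a second application of the lemma to $(P_{c,\eps}A, P_S)$ produces $c'\in X$ with $P_{c,\eps}AP_{c',\delta}\neq 0$ and $P_{c',\delta}P_S\neq 0$; the hypothesis $A\in\cA_n$ gives $d(c,c')\le\eps+\delta+n$, while the second non-vanishing gives $d(c',S)<\delta$. Hence $d(c,S)<\eps+2\delta+n$, and since $\delta>0$ was arbitrary, $d(c,S)\le\eps+n$. Combined with $d(c,S)>m-\eps$ this gives $m-n<2\eps$, and arbitrariness of $\eps$ yields the contradiction $m\le n$.

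The main obstacle is this second inclusion: one cannot simply reduce to the case $S=B(b,s)$, because the support of $AP_S$ depends on all of $S$ at once. The correct tool is the two-sided localization provided by Lemma~\ref{epsball}, applied first to separate $P_{T_m}$ from $AP_S$ and then to invoke the $\cA_n$-distance bound. With both inclusions in hand, $\bigcup_n\cA_n$ and $\bigcup_n\cY_n$ have the same norm closure, so $\cA$ equals the norm closure of $\bigcup_m\cY_m$.
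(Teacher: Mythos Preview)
Your argument is correct, and for the inclusion $\cY_m\subseteq\cA_m$ it is essentially identical to the paper's (the paper phrases it contrapositively with the roles of the two balls swapped, but the content is the same). The difference lies in the reverse inclusion. The paper shows $\cA_m\subseteq\cY_{m+1}$ by a covering argument based on Lemma~\ref{lindelof}: one writes $S$ as a countable union of balls $B(a,r)$ with $r<1/3$ contained in $S$, writes $T(m+1)$ as a countable union of balls $B(b,1/3)$, checks that $P_{b,1/3}AP_{a,r}=0$ for each such pair because $d(a,b)\geq m+1>r+1/3+m$, and then invokes Lemma~\ref{lindelof} twice to pass from individual balls to $S$ and $T(m+1)$. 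Your route instead applies Lemma~\ref{epsball} twice to localize a hypothetical nonzero $P_{T_m}AP_S$ down to a pair of $\eps$- and $\delta$-balls and then lets $\eps,\delta\to 0$; this avoids any explicit covering and is arguably cleaner, though it trades the static geometric picture for an $\eps$--$\delta$ limiting argument. Note that your parenthetical about the index shift is exactly on point: with the paper's open definition $S(m)=\{x:d(x,S)<m\}$ your strong limit of $P_{T_m}$ as $m\searrow n$ is $P_{\{d(x,S)>n\}}$ rather than $I-P_{S(n)}$, so what you actually obtain is $\cA_n\subseteq\cY_{n+1}$, the same shift the paper gets. In both approaches the two inclusions together give $\bigcup_m\cA_m=\bigcup_m\cY_m$ as sets, which is slightly more than the statement requires.
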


\begin{proof} If we put $T(m)=X\backslash S(m)$ then $A\in \cY_m$ if and only if for every $S\in \cU$ one has $P_{T(m)}AP_S=0$.

Let $A\in \cA_m$, $0<r<1/3$ and $s=1/3$. If $S\in \cU$ and $b\in T(m+1)$ then $B(a,r)\subseteq S$ implies $d(a,b)\geq m+1> r+s+m$ and then $P_{b,s}AP_{a,r}=0$. Since $S$ may be written as the union of a countable number of balls $B(a,r)$ with $0<r<1/3$, Lemma~\ref{lindelof} implies that $P_{b,s}AP_S=0$. Since $T(m+1)$ may be covered by a countable number of balls $B(b,s)$, all with $s=1/3$, we deduce that $P_{T(m+1)}AP_S=0$. Therefore $A\in \cY_{m+1}$.

Conversely let $A\in \cY_m$, $r,\, s >0$ and $d(a,b)>r+s+m$. If we put $S=B(a,r)$ then $B(b,s)\subseteq T(m)$, so $P_{T(m)}AP_S=0$ implies $P_{b,s}AP_{a,r}=0$. Therefore $A\in \cA_m$.

The two inclusions together imply
\[
{\textstyle \bigcup_{m=1}^\infty\cA_m} ={\textstyle\bigcup_{m=1}^\infty\cY_m}
\]
and hence the statement of the theorem.
\end{proof}

We wish to associate an ideal $\cJ_S$ with every non-empty open subset $S$ of $X$. This may be done in two ways and we will prove that they yield the same result. The idea is to identify operators that `decrease in size' as one moves away from $S$.

If $S\in\cU$ and $r>0$ we put
\[
S(r)=\{x\in X:d(x,S)<r\}=
{\textstyle \bigcup \{ B(x,r):x\in S \} .}
\]
and
\begin{eqnarray*}
\cJ_{S,n}&=&\{ A\in \cA: A=P_{S(n)}AP_{S(n)} \} \\
         &=&\{ A\in \cA: A=AP_{S(n)}=P_{S(n)}A \} \\
         &=&\{ A\in \cA: 0=AP_{T(n)}=P_{T(n)}A \}
\end{eqnarray*}
where $T(n)=X\backslash S(n)=\{ x\in X: d(x,S)\geq n \}$. We also define
\begin{eqnarray*}
\cK_{S,n}&=&\{ A\in\cA: AP_{a,r}\not= 0 \implies d(a,S)\leq n+r \} \\
&&\cap\{A\in\cA: P_{a,r} A\not= 0 \implies d(a,S)\leq n+r \} \\
&=&\{ A\in\cA: d(a,S)> n+r  \implies  AP_{a,r}= P_{a,r}A= 0  \}.
\end{eqnarray*}

\begin{lemma}\label{JeqK}
If $n\geq 1$ then
\begin{eq}
{\textstyle\bigcup}\left\{ B(x,r):d(x,S)>r+n\right\} \subseteq T(n)\subseteq {\textstyle\bigcup}\left\{ B(x,r):d(x,S)>r+n-1\right\}.\label{BTinclusion}
\end{eq}
Hence
\begin{eq}
\cK_{S,n-1}\subseteq \cJ_{S,n}\subseteq \cK_{S,n}.\label{KJinclusion}
\end{eq}
\end{lemma}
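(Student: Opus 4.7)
The plan is to prove the set-theoretic inclusion \eqref{BTinclusion} first by direct triangle-inequality arguments, and then translate it into the operator-algebraic inclusion \eqref{KJinclusion} using the correspondence between set inclusion and domination of the associated projections, supplemented by the Lindel\"of-type argument of Lemma~\ref{lindelof}.

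For the first inclusion of \eqref{BTinclusion}, if $y\in B(x,r)$ and $d(x,S)>r+n$, then $d(y,S)\geq d(x,S)-d(x,y)>n$, so $y\in T(n)$. For the second inclusion, any $y\in T(n)$ satisfies $d(y,S)\geq n>r+n-1$ for any fixed $r\in(0,1)$; taking $x=y$ we have $y\in B(x,r)$ with $d(x,S)>r+n-1$, so the right-hand union covers $T(n)$.

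For the inclusion $\cJ_{S,n}\subseteq \cK_{S,n}$, take $A\in \cJ_{S,n}$ and suppose $d(a,S)>n+r$. The first inclusion of \eqref{BTinclusion} gives $B(a,r)\subseteq T(n)$, whence $P_{a,r}=P_{T(n)}P_{a,r}$, and therefore $AP_{a,r}=AP_{T(n)}P_{a,r}=0$; the left-multiplication case is identical. For $\cK_{S,n-1}\subseteq \cJ_{S,n}$, take $A\in \cK_{S,n-1}$. The second inclusion of \eqref{BTinclusion} exhibits $T(n)$ as a subset of the open set $W=\bigcup\{B(x,r):d(x,S)>r+n-1\}$. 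For every such ball, the definition of $\cK_{S,n-1}$ forces $AP_{x,r}=0$, and the monotone/Lindel\"of argument used in the proof of Lemma~\ref{lindelof} then yields $AP_W=0$. Since $T(n)\subseteq W$ gives $P_{T(n)}=P_WP_{T(n)}$, one concludes $AP_{T(n)}=0$, and the symmetric argument on the left shows $P_{T(n)}A=0$, so $A\in \cJ_{S,n}$.

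The argument is essentially bookkeeping, and I do not expect a serious obstacle. The only mild subtlety is that $T(n)$ is closed rather than open, so Lemma~\ref{lindelof} cannot be applied to $T(n)$ directly; one instead covers $T(n)$ by the open set $W$ whose existence is guaranteed by the second half of \eqref{BTinclusion}, and uses the projection identity $P_{T(n)}=P_WP_{T(n)}$ to pass from $AP_W=0$ to $AP_{T(n)}=0$. This is exactly what makes the intermediate step from $\cK_{S,n-1}$ (rather than $\cK_{S,n}$) to $\cJ_{S,n}$ necessary: the $n-1$ in the definition of $W$ corresponds precisely to the loss of one unit of distance incurred by using a positive radius $r<1$ in the covering of $T(n)$.
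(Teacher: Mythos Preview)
Your proof is correct and follows essentially the same route as the paper's own argument: the same triangle-inequality verification of \eqref{BTinclusion} (the paper fixes $r=1/2$ where you allow any $r\in(0,1)$), and the same use of Lemma~\ref{lindelof} on the open union of balls to deduce $AP_{T(n)}=0$. Your treatment is in fact slightly more explicit than the paper's about the passage from the open cover $W$ to the closed set $T(n)$ via $P_{T(n)}=P_WP_{T(n)}$, a point the paper leaves implicit.
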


\begin{proof}
If $y\in B(x,r)$ and $d(x,S)>r+n$ then $d(y,S)>n$. Hence $B(x,r)\subseteq T(n)$. This proves the first inclusion of (\ref{BTinclusion}).

If $x\in T(n)$ then $d(x,S)\geq n$. Putting $r=1/2$ we deduce that
$x\in B(x,r) $ and $d(x,S)>r+n-1$. This proves the second inclusion of (\ref{BTinclusion}).

If $A\in K_{S,n-1}$ then $AP_{x,r}=P_{x,r}A=0$ for all $x,\, r$ such that $d(x,S)>r+n-1$, so Lemma~\ref{lindelof} and the second inclusion of (\ref{BTinclusion}) together imply that $AP_{T(n)}=P_{T(n)}A=0$. Therefore $A\in \cJ_{S,n}$. On the other hand if $A\in \cJ_{S,n}$ then $AP_{T(n)}=P_{T(n)}A=0$. The first inclusion (\ref{BTinclusion}) of now implies that  $AP_{x,r}=P_{x,r}A=0$ whenever $d(x,S)>r+n$. Therefore $A\in \cK_{S,n}$. This completes the proof of (\ref{KJinclusion}).
\end{proof}

Let $\cF$ denote the family of all non-empty open sets $S$ such that $S(n)\not=X$
for every $n\geq 1$. We say that $S,\, T\in\cF$ are asymptotically
equivalent if for all $n\geq 1$ there exists $m\geq 1$ such that
$S(n)\subseteq T(m)$ and $T(n)\subseteq S(m)$. In particular all
nonempty, open, bounded sets are asymptotically equivalent to each
other.

\begin{theorem}\label{idealdef}
If $S\in \cU$ then
\[
{\textstyle\ol{\rule{0em}{2.2ex} \bigcup_{n=1}^\infty\cJ_{S,n}}} ={\textstyle\ol{\rule{0em}{2.2ex}\bigcup_{n=1}^\infty\cK_{S,n}}}.
\]
If $S\in \cF$ then this set, denoted by $\cJ_S$, is a proper, closed, two-sided ideal in $\cA$ and it contains $\cK(\cH)$. If $S,\, T$ are asymptotically equivalent than $\cJ_S=\cJ_T$.
\end{theorem}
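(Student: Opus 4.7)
The plan is to handle the three assertions in order. The equality $\overline{\bigcup_n\cJ_{S,n}}=\overline{\bigcup_n\cK_{S,n}}$ is essentially free from Lemma~\ref{JeqK}: the chain $\cK_{S,n-1}\subseteq\cJ_{S,n}\subseteq\cK_{S,n}$, combined with the evident monotonicity $\cK_{S,n}\subseteq\cK_{S,n+1}$, forces $\bigcup_{n\geq 1}\cJ_{S,n}=\bigcup_{n\geq 1}\cK_{S,n}$ already before taking closures. This lets me use whichever description is more convenient. Each $\cJ_{S,n}$ is plainly a $^\ast$-closed linear subspace (the defining identity $A=P_{S(n)}AP_{S(n)}$ survives sums, scalar multiples and adjoints), so $\cJ_S$ inherits these features in the closure.

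The substantive point is multiplicative absorption, which I handle via the $\cK$-description. I claim that if $A\in\cK_{S,n}$ and $B\in\cA_k$ then $AB,BA\in\cK_{S,n+k}$. The easy half of each product uses only the hypothesis on $A$: for instance $BAP_{a,r}\not= 0\imp AP_{a,r}\not= 0\imp d(a,S)\leq r+n$. The hard half, say $P_{a,r}BA\not= 0$, is exactly where Lemma~\ref{epsball} does its work: it supplies, for every $\eps>0$, an intermediate point $c$ with $P_{a,r}BP_{c,\eps}\not= 0$ and $P_{c,\eps}A\not= 0$, so that $d(a,c)\leq r+\eps+k$ and $d(c,S)\leq\eps+n$ combine by the triangle inequality to $d(a,S)\leq r+n+k+2\eps$, and $\eps\to 0$ finishes the case. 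Products $AB$ are symmetric. For a general $B\in\cA$ one approximates by elements of $\widetilde{\cA}=\bigcup_k\cA_k$ and uses that $\cJ_S$ is norm closed.

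It remains to establish containment of compacts, properness, and asymptotic invariance. For $\cK(\cH)\subseteq\cJ_S$, I use that $S$ is non-empty, so every bounded subset of $X$ lies in $S(n)$ for $n$ sufficiently large; finite-rank operators are norm dense in $\cK(\cH)$, and any finite-rank $F$ is the norm limit of $P_UFP_U$ as $U$ exhausts $X$, so such operators lie in $\cJ_{S,n}$ as soon as $U\subseteq S(n)$. For properness, $S\in\cF$ produces, for each $n\geq 1$, a point $x$ with $d(x,S)\geq n+1$; then $B(x,1/2)\subseteq T(n)$ and the positive-measure hypothesis on balls gives $P_{T(n)}\not= 0$. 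Since $AP_{T(n)}=0$ for every $A\in\cJ_{S,n}$, one has $\norm I-A\norm\geq\norm(I-A)P_{T(n)}\norm=\norm P_{T(n)}\norm=1$, and this lower bound survives norm limits, so $I\notin\cJ_S$. Finally, if $S\sim T$ and $S(n)\subseteq T(m)$, then $P_{T(m)}P_{S(n)}=P_{S(n)}$, so every $A\in\cJ_{S,n}$ automatically lies in $\cJ_{T,m}$; the symmetric inclusion then yields $\cJ_S=\cJ_T$ after closure. The multiplicative-absorption step is the only one needing genuine work, since it is the single place where Lemma~\ref{epsball} is actually invoked; everything else is direct bookkeeping from the two equivalent descriptions and from the hypotheses on $(X,d,\mu)$ and $\cF$.
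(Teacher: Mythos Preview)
Your proof is correct and follows essentially the same route as the paper: the equality of unions via Lemma~\ref{JeqK}, the multiplicative absorption $\cA_k\cdot\cK_{S,n}\subseteq\cK_{S,n+k}$ established through Lemma~\ref{epsball} and the triangle inequality (followed by density in $\cA$), properness via a ball in $T(n)$ of positive measure, and the containment of compacts and asymptotic invariance by the obvious inclusions. The only differences are cosmetic --- you swap the roles of the two factors in the absorption argument and phrase properness as $\norm(I-A)P_{T(n)}\norm=1$ rather than the paper's equivalent $\norm(I-P_{S(n)})(I-A)\norm\geq 1$ --- but the substance is identical.
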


\begin{proof} Lemma~\ref{JeqK} implies that
\[
{\textstyle \bigcup_{n=1}^\infty\cJ_{S,n}} ={\textstyle\bigcup_{n=1}^\infty\cK_{S,n}}.
\]
We denote this linear subspace of $\cA$ by $\ti{\cJ}_S$.

Let $A\in \cA_m$ and $B\in \cK_{S,n}$. If $ABP_{a,r}\not=0$ then $BP_{a,r}\not=0$ so $d(a,S)\leq n+r$. If $P_{a,r}AB\not=0$ then Lemma~\ref{epsball} implies that for every $\eps >0$ there exists $b\in X$ such that $P_{a,r}AP_{b,\eps}\not= 0$ and $P_{b,\eps}B\not= 0$. Therefore $d(a,b)\leq m+r+\eps$ and $d(b,S)\leq n+\eps$. We conclude that $d(a,S) \leq m+n+r+2\eps$. Since $\eps >0$ is arbitrary we deduce that $d(a,S)\leq m+n+r$. Therefore $AB\in \cK_{S,m+n}$. A similar argument can be applied to $BA$. These calculations imply that $\ti{\cA}\ti{\cJ}_S\subseteq \ti{\cJ}_S$ and $\ti{\cJ}_S\ti{\cA}\subseteq \ti{\cJ}_S$. The statement of the theorem now follows by a density argument.

In order to prove that $\cJ_S$ is proper we need to establish that
$\norm I-A\norm\geq 1$ for all $A\in\cJ_S$, or equivalently that
this holds for all $A\in\cJ_{S,n}$ and all $n\geq 1$. If $A=AP_{S(n)}=P_{S(n)}A$ then this follows from
\[
\norm I-P_{S(n)}\norm=\norm (I-P_{S(n)})(I-A)\norm\leq \norm I-P_{S(n)}\norm\, \norm
I-A\norm\leq \norm I-A\norm
\]
provided $\norm I-P_{S(n)}\norm = 1$. Since $S\in\cF$ there exists $a\in
X\backslash S(n+2)$. This implies that $B(a,1)\cap S(n)=\emptyset$.
Since $B(a,1)$ has positive measure there exists a non-zero $\phi\in
\cH$ whose support is contained in $B(a,1)$ and for which
$(I-P_{S(n)})\phi=\phi$.

If $A$ is a finite rank operator then $\lim_{n\to\infty}\norm
A-P_{S(n)}AP_{S(n)}\norm=0$, so $A\in\cJ_S$.
The same applies to all $A\in\cK(\cH)$ by a density argument.

If $S,\, T$ are asymptotically equivalent then routine algebra shows
that $\cJ_{S,0}=\cJ_{T,0}$. Once again a density argument implies
that $\cJ_{S}=\cJ_{T}$.
\end{proof}

If $A\in\cA$ and $S\in\cF$ we put $\sig_S(A)=\sig(\pi_{\cJ_S}(A))$.

\begin{theorem}
Let $S,\, T\in\cF$. If $S\subseteq T$ then $\cJ_S\subseteq \cJ_T$
and $\sig_S(A)\supseteq \sig_T(A)$ for every $A\in\cA$. If $S,\,
T\in\cF$ are asymptotically independent in the sense that
\begin{eq}
\forall n\geq 1.\, \exists m\geq 1.\, S(n)\cap T(n)\subseteq (S\cap
T)(m),\label{indep1}
\end{eq}
then
\begin{eq}
\cJ_{S\cap T}=\cJ_S\cap\cJ_T\label{indep2}
\end{eq}
and
\begin{eq}
\sig_{S\cap T}(A)=\sig_S(A)\cup\sig_T(A)\label{indep3}
\end{eq}
for every $A\in\cA$.
\end{theorem}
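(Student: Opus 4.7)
The plan is to establish the three assertions in turn: the $S \subseteq T$ case, the ideal identity $\cJ_{S \cap T} = \cJ_S \cap \cJ_T$ under asymptotic independence, and the spectral decomposition.

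For the first claim, $S \subseteq T$ gives $S(n) \subseteq T(n)$ for every $n \geq 1$, so $P_{T(n)} P_{S(n)} = P_{S(n)}$. If $A \in \cJ_{S,n}$ then $A = P_{S(n)}AP_{S(n)}$, and sandwiching by $P_{T(n)}$ on both sides yields $P_{T(n)}AP_{T(n)} = A$, so $A \in \cJ_{T,n}$. Taking unions over $n$ and closing gives $\cJ_S \subseteq \cJ_T$, and Lemma~\ref{kideals} then yields $\sig_T(A) \subseteq \sig_S(A)$. Applying the same argument to $S \cap T \subseteq S$ and $S \cap T \subseteq T$ gives the easy inclusion $\cJ_{S \cap T} \subseteq \cJ_S \cap \cJ_T$.

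The main work is the reverse inclusion $\cJ_S \cap \cJ_T \subseteq \cJ_{S \cap T}$. Given $A \in \cJ_S \cap \cJ_T$ and $\eps > 0$, I would pick an integer $n$ and operators $A_1 \in \cJ_{S,n}$, $A_2 \in \cJ_{T,n}$ with $\norm A - A_j\norm < \eps/4$; this is possible by Theorem~\ref{idealdef} combined with the monotonicity $\cJ_{S,n'} \subseteq \cJ_{S,n}$ for $n' \leq n$ (take $n$ to be the larger of the two separate indices). The candidate approximant is
\[
\ti{A} = P_{S(n) \cap T(n)} A P_{S(n) \cap T(n)} = P_{T(n)} P_{S(n)} A P_{S(n)} P_{T(n)},
\]
where the second equality uses commutativity of multiplication by indicator functions. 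Splitting
\[
A - \ti{A} = (A - P_{T(n)} A P_{T(n)}) + P_{T(n)} (A - P_{S(n)} A P_{S(n)}) P_{T(n)},
\]
each bracketed difference is controlled by inserting $A_2$ respectively $A_1$ inside the sandwich (each $A_j$ is preserved by its sandwich by the definition of $\cJ_{T,n}$ and $\cJ_{S,n}$), giving $\norm A - \ti{A}\norm < \eps$. Asymptotic independence supplies $m$ with $S(n) \cap T(n) \subseteq (S \cap T)(m)$, whence $\ti{A} = P_{(S \cap T)(m)} \ti{A} P_{(S \cap T)(m)}$; combined with $\ti{A} \in \cA$ (indicator multiplications lie in $\cA$ by Theorem~\ref{newAdef}) this places $\ti{A}$ in $\cJ_{S \cap T, m} \subseteq \cJ_{S \cap T}$. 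Closedness of $\cJ_{S \cap T}$ and arbitrariness of $\eps$ complete the inclusion.

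The spectral identity $\sig_{S \cap T}(A) = \sig_S(A) \cup \sig_T(A)$ is then an immediate application of Theorem~\ref{J1J2} with $\cJ_1 = \cJ_S$, $\cJ_2 = \cJ_T$, and $\cJ_3 = \cJ_S \cap \cJ_T = \cJ_{S \cap T}$. The main obstacle is the second paragraph, where two independently chosen approximants must be combined and controlled by a single geometric hypothesis; the critical enabling fact is that $P_{S(n)}$ and $P_{T(n)}$ commute with product $P_{S(n) \cap T(n)}$, turning a nested pair of sandwiches into a single sharper one to which asymptotic independence can be applied.
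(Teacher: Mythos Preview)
Your proof is correct and follows essentially the same route as the paper. The only cosmetic difference is that where you explicitly pick approximants $A_1,A_2$ and insert them to bound $\norm A-P_{S(n)}AP_{S(n)}\norm$ and $\norm A-P_{T(n)}AP_{T(n)}\norm$, the paper simply invokes the characterization (\ref{J0closure}) to assert these quantities tend to zero; your argument is in effect a re-derivation of that characterization, and the subsequent telescoping estimate and use of asymptotic independence are identical.
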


\begin{proof} If $A\in\cJ_{S,0}$ then there exists $n\geq 1$ such that
$A=AP_{S(n)}=P_{S(n)}A$. If $S\subseteq T$, this implies
$A=AP_{T(n)}=P_{T(n)}A$ and hence $\cJ_{S,0}\subseteq \cJ_{T,0}$.
Therefore  $\cJ_{S}\subseteq \cJ_{T}$ and $\sig_S(A)\supseteq
\sig_T(A)$ for every $A\in\cA$ by Lemma~\ref{kideals}.

If $S,\, T\in\cF$ we deduce that $\cJ_{S\cap
T}\subseteq\cJ_S\cap\cJ_T$. Now suppose that $S,\, T$ are
asymptotically independent and that $A\in \cJ_S\cap\cJ_T$. Equation
(\ref{J0closure}) implies
\[
\lim_{n\to\infty} \norm
A-P_{S(n)}AP_{S(n)}\norm=0,\hspace{2em}\lim_{n\to\infty} \norm
A-P_{T(n)}AP_{T(n)}\norm=0.
\]
If we put
\begin{eqnarray*}
A_n&=&P_{S(n)}P_{T(n)}AP_{T(n)}P_{S(n)}\\
&=&P_{S(n)\cap T(n)}AP_{T(n)\cap S(n)}
\end{eqnarray*}
then asymptotic independence implies
\[
A_n=P_{\{S\cap T\}(m)}A_n=A_nP_{\{S\cap T\}(m)}
\]
so $A_n\in\cJ_{S\cap T,0}$. Finally
\begin{eqnarray*}
\lim_{n\to\infty}\norm A-A_n\norm &\leq & \lim_{n\to\infty} \norm A-P_{S(n)}AP_{S(n)}\norm+%
\lim_{n\to\infty}\norm P_{S(n)}(A-P_{T(n)}AP_{T(n)})P_{S(n)}\norm\\
&\leq & \lim_{n\to\infty}\norm A-P_{S(n)}AP_{S(n)}\norm+%
\lim_{n\to\infty}\norm A-P_{T(n)}AP_{T(n)}\norm\\
&=& 0.
\end{eqnarray*}
Therefore $A\in\cJ_{S\cap T}$. Equation~(\ref{indep3}) finally
follows from Theorem~\ref{J1J2}.
\end{proof}

The \ca\ $\cA$ contains $L^\infty(X)$ and is therefore not
separable. It is unlikely that one can obtain a useful
classification of its irreducible representations, but a
\emph{partial} classification of its ideals can be obtained as
follows.

Let $\overline{X}$ be some compactification of $X$ and let $\partial
X=\overline{X}\backslash X$ denote the `points at infinity'. The
restriction of any $f\in C(\ol{X})$ to $X$ lies in
$L^\infty(X,\mu)$. Since every non-empty open subset of $X$ has positive measure we see
that
\begin{eq}
\norm f \norm_{C(\ol{X})}=\norm f \norm_{L^\infty}=\norm f \norm_{\cL(\cH)}%
=\norm f \norm_\cA.\label{4norms}
\end{eq}
It follows that $\cB=C(\overline{X})$ is a commutative \csa\ of
$\cA$. Note that there is a order-preserving one-one correspondence
between the ideals $\cI$ in $\cB$ and the open subsets $V$ of
$\overline{X}$. It is given by
\[
V_\cI=\{ x\in \overline{X}: f(x)\not= 0 \mbox{ for some } f\in \cI\}
\]
and
\[
\cI_V=\{f\in C(\ol{X}):f\vert_{\ol{X}\backslash V}=0\}.
\]

We will write $\ol{E}$ to denote the (compact) closure of a set
$E\sub \ol{X}$ in $\ol{X}$, even if $E\sub X$. If $U$ is an open
subset of $X$ then we define its set of asymptotic directions
$\ti{U}\sub \pd X$ to be the set of all $a\in\pd X$ that possess a
neighbourhood $V\sub\ol{X}$ for which $V\cap X\sub U$. It is
immediate that $\ti{U}$ is an open subset of $\pd X$ and that
$U\cup\ti{U}$ is an open subset of $\ol{X}$ with complement $\ol
{X\backslash U}$.

If $S\in\cU$ then $S(n)$ is an increasing sequence of open sets in
$X$, so $\ti{S(n)}$ is an increasing sequence of open subsets of
$\pd X$. We put
\[
\ha{S}=\bigcup_{n\geq 1}\ti{S(n)}
\]
and observe that $\ha{S}$ is also an open subset of $\pd X$.

\begin{example}
Let $ X=\R^d $  with the usual Euclidean metric and let $\Sigma$ be
the `sphere at infinity' parametrized by unit vectors $e$, called
directions.

\textbf{(a)} If
\[
S=\{ x\in X: \forall i\in\{ 1,2,\cdots,d \} . \, x_i> 0 \},
\]
then
\[
\ti{S(n)}=\ha{S}= \{ e\in \Sigma: \forall i\in\{1,2,\cdots,d\}. \,
e_i> 0\}
\]
for all $n\geq 1$. Therefore $\cB/(\cJ_S\cap\cB)\simeq C(K)$ where
\[
K=\{e\in \Sigma: \exists i\in\{1,2,\cdots,d\}. \, e_i\leq 0\}.
\]

\textbf{(b)} If we are only interested in asymptotics in a
particular direction $e\in\Sigma$ then we may define
\[
S=\R^d\backslash \bigcup_{r>0}\overline{B(re,r^{1/2})}.
\]
One sees that $S\in\cF$ and
\[
\ti{S(n)}=\ha{S}=\Sigma\backslash \{e\}
\]
for all $n\geq 1$. The quotient map $\pi$ from $\cB$ to
$\cB/(\cJ_S\cap\cB)\simeq \C$ is given by $\pi(f)=f(e)$.

\end{example}

\begin{lemma}\label{density}
If $S\in \cU$ then $\cJ_{S,0}\cap L^\infty(X,\mu)$ is dense in
$\cJ_S\cap L^\infty(X,\mu)$.
\end{lemma}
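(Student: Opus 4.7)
The plan is direct: given $V\in\cJ_S\cap L^\infty(X,\mu)$, I would exhibit an explicit approximating sequence $V_n\in\cJ_{S,0}\cap L^\infty(X,\mu)$ by truncating $V$ to the successively larger neighbourhoods $S(n)$, specifically $V_n=P_{S(n)}VP_{S(n)}$.

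The first step is to establish the natural analogue of equation~(\ref{J0closure}) in the present setting: every $A\in\cJ_S$ satisfies $\lim_{n\to\infty}\norm A-P_{S(n)}AP_{S(n)}\norm=0$. This follows by a routine $2\eps$-argument. Given $\eps>0$, choose $B\in\cJ_{S,m}$ with $\norm A-B\norm<\eps$. Since $B=P_{S(n)}BP_{S(n)}$ whenever $n\geq m$, one obtains
\[
\norm A-P_{S(n)}AP_{S(n)}\norm\leq \norm A-B\norm+\norm P_{S(n)}(B-A)P_{S(n)}\norm\leq 2\norm A-B\norm<2\eps
\]
for all such $n$, which yields the desired convergence.

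Next I would apply this with $A=V$. The key observation is that $V_n=P_{S(n)}VP_{S(n)}$ is again a multiplication operator, because $V$ commutes with the multiplication operator $P_{S(n)}$; concretely, $V_n$ is multiplication by the bounded function $\chi_{S(n)}V$, whence $V_n\in L^\infty(X,\mu)$. Since the support of $\chi_{S(n)}V$ is contained in $S(n)$, we also have $V_n=P_{S(n)}V_nP_{S(n)}$, so $V_n\in\cJ_{S,n}\subseteq\cJ_{S,0}$. The first step then gives $\norm V-V_n\norm\to 0$, showing that $V$ lies in the norm closure of $\cJ_{S,0}\cap L^\infty(X,\mu)$.

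I do not foresee any serious obstacle here: the argument is a short verification once the analogue of~(\ref{J0closure}) is in hand, and that itself is a direct consequence of the definition $\cJ_S=\overline{\bigcup_n\cJ_{S,n}}$. The essential feature being exploited is that the subspace $L^\infty(X,\mu)\subset\cA$ is stable under the compression $A\mapsto P_{S(n)}AP_{S(n)}$, a property that fails for a general element of $\cA$ and is precisely what makes the lemma an intrinsically abelian statement about multiplication operators.
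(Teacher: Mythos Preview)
Your proposal is correct and follows the same line as the paper: the paper also takes $p_n f$ (equivalently $P_{S(n)}VP_{S(n)}$, since multiplication operators commute) as the approximating sequence and invokes (\ref{J0closure}) for the convergence. The only difference is cosmetic---the paper cites (\ref{J0closure}) directly, whereas you re-derive its analogue for $\cJ_S$ via the $2\eps$ argument before applying it.
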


\begin{proof} Let $f\in J_S\cap L^\infty(X,\mu)$. If $p_n$ is the
multiplication operator associated with the characteristic function
of $S(n)$ then $p_nf\in \cJ_{S,0}\cap L^\infty$ for all $n\geq 1$
and $\lim_{n\to\infty}\norm f - p_nf\norm =0$ by (\ref{J0closure}).
\end{proof}

\begin{theorem}
The map $\cJ\to V_{\cJ\cap\cB}$ defines an order-preserving map from
ideals in $\cA$ to open subsets of $\ol{X}$. If $S\in\cU$ then
\[
V_{\cJ_{S}\cap\cB}=\ha{S}\cup X.
\]
If $S\in \cF$ then $\ha{S}\cup X\not= \ol{X}$.
\end{theorem}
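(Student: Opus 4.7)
The first assertion is formal: the map $\cJ \mapsto \cJ \cap \cB$ carries ideals of $\cA$ to ideals of $\cB$ and is order-preserving, and combining this with the order-preserving bijection between ideals of $C(\ol{X})$ and open subsets of $\ol{X}$ recalled just before the theorem gives the first statement.

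For the equality $V_{\cJ_S \cap \cB} = \ha{S} \cup X$ I plan to prove the two inclusions separately. For $\supseteq$, whether the candidate is $x \in X$ or $a \in \ha{S}$, the recipe is the same: find an open $V \sub \ol{X}$ containing the point with $V \cap X \sub S(n)$ for some $n$, and then apply Urysohn's lemma on the compact Hausdorff space $\ol{X}$ to produce $f \in C(\ol{X})$ taking value $1$ at the chosen point and supported inside $V$. For $x \in X$ one takes $V$ to be a $d$-ball $B(x,r)$ with compact closure in $X$ (available since $X$ is open in $\ol{X}$ and closed balls are compact); for $a \in \ha{S}$ one uses the neighborhood supplied by the definition of $\ti{S(n)}$. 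In both cases $f$ vanishes off $S(n)$, so $P_{S(n)} f = f = f P_{S(n)}$, placing $f$ in $\cJ_{S,n} \sub \cJ_S$ (with $f \in \cA$ by Theorem~\ref{newAdef}).

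The reverse inclusion is the main obstacle. I would argue by contrapositive: suppose $a \in \pd X \setminus \ha{S}$ and $f \in \cJ_S \cap \cB$, and deduce $f(a) = 0$. Lemma~\ref{density} together with (\ref{J0closure}) and the norm identification (\ref{4norms}) yields $\|\chi_{T(n)} f\|_\infty \to 0$ as $n \to \infty$, where $T(n) = X \setminus S(n)$. If $|f(a)| = c > 0$, continuity on $\ol{X}$ supplies a neighborhood $W$ of $a$ with $|f| > c/2$ on $W$, and the hypothesis $a \notin \ti{S(n+1)}$ forces $W \cap T(n+1) \neq \emptyset$, allowing one to pick $x' \in W \cap T(n+1)$ for each $n$. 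Continuity on $X$ then gives an open ball $B(x',\rho) \sub X$ with $\rho \leq 1$ on which $|f| > c/2$, and the triangle inequality $d(y,S) \geq d(x',S) - \rho > n$ for $y \in B(x',\rho)$ shows $B(x',\rho) \sub T(n)$. Positivity of $\mu$ on this ball then forces $\|\chi_{T(n)} f\|_\infty \geq c/2$ for every $n$, contradicting the vanishing. The delicate step is precisely this propagation from an $L^\infty$ estimate along the filtration $\{T(n)\}$ back to a pointwise statement near the boundary point; continuity on $\ol{X}$ and positivity of $\mu$ on open balls are exactly the ingredients that make it go through.

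For the last claim, assume $S \in \cF$, so $T(n) \neq \emptyset$ for every $n$. The sets $\ol{T(n)}^{\ol{X}}$ form a decreasing family of non-empty compact subsets of $\ol{X}$, and the finite-intersection property supplies $a \in \bigcap_n \ol{T(n)}^{\ol{X}}$. This $a$ cannot lie in $X$: for any $s_0 \in S$, finiteness of $d(a,s_0)$ would be incompatible with $a \in T(n)$ for all large $n$. Nor can it lie in $\ha{S}$: the condition $a \in \ol{T(n)}^{\ol{X}}$ means every neighborhood of $a$ meets $T(n)$, so $a \notin \ti{S(n)}$ for any $n$. Hence $a \in \ol{X} \setminus (\ha{S} \cup X)$ and the inclusion is strict.
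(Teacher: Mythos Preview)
Your proof is correct and follows essentially the same path as the paper for the first two assertions: the order-preservation is formal, the inclusion $\ha{S}\cup X\subseteq V_{\cJ_S\cap\cB}$ comes from producing continuous functions supported in $S(n)\cup\ti{S(n)}$ (the paper cites $C_c(S(n)\cup\ti{S(n)})$ directly, you invoke Urysohn), and the reverse inclusion is the same positivity-of-$\mu$ argument, only you phrase it via $\|\chi_{T(n)}f\|_\infty\to 0$ while the paper estimates $\|f-g\|_\infty$ for $g\in\cJ_{S,0}\cap L^\infty$ and then invokes Lemma~\ref{density}.

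The one genuine difference is the final claim. The paper dispatches it in a line: since $S\in\cF$ forces $\cJ_S$ to be proper (Theorem~\ref{idealdef}), one has $1\notin\cJ_S\cap\cB$, so the corresponding open set cannot be all of $\ol{X}$. Your argument instead constructs a point of $\ol{X}\setminus(\ha{S}\cup X)$ directly by intersecting the compact sets $\ol{T(n)}^{\ol{X}}$. Both are valid; the paper's route is shorter because it recycles the properness already established, while yours is self-contained and more explicitly topological. One small remark on your version: to conclude that $a\in\bigcap_n\ol{T(n)}^{\ol{X}}$ cannot lie in $X$ you implicitly use that $\ol{T(n)}^{\ol{X}}\cap X=T(n)$, which holds because $T(n)$ is closed in $X$ and $X$ is open in $\ol{X}$; it would be worth stating this rather than writing ``$a\in T(n)$ for all large $n$''.
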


\begin{proof}  The
first statement of the theorem depends on the observation that if
$\cJ$ is an ideal in $\cA$ then $\cJ\cap\cB$ is an ideal in $\cB$.

Given $S\in\cU$, we put $V=V_{\cJ_S\cap\cB}$. It follows directly from
the definitions that
\[
C_c(S(n)\cup \ti{S(n)})\sub \cJ_{S,0}\cap\cB\sub \cJ_S\cap\cB.
\]
Therefore $S(n)\cup \ti{S(n)}\sub V$ for all $n\geq 1$. Since $S$ is
non-empty, letting $n\to\infty$ we obtain $X\cup\ha{S}\sub V$.

If $a\notin X\cup\ha{S}$ then there exists $f\in C(\ol{X})$ such
that $f(a)=1$. Given $g\in \cJ_{S,0}\cap L^\infty(X)$ there exists
$n\geq 1$ such that $g=gp_n=p_ng$, where $p_n$ is the characteristic
function of $S(n)$. Since $a\in \ol{X\backslash S(n+2)}$, given
$\eps >0$, there exists $b\in X\backslash S(n+2)$ such that
$|f(b)-1| <\eps$. Putting $\eps=1/2$ there exists $\del\in (0,1)$
such that $x\in B(b,\del)$ implies $|f(x)|>1/2$ and $x\notin S(n)$.
The set $B(b,\del)$ has positive measure so $\norm f-g\norm_\infty
>1/2$. Lemma~\ref{density} now implies that $\norm f-h\norm_\infty
\geq 1/2$ for all $h\in \cJ_S\cap L^\infty(X)$ so $f\notin
\cJ_S\cap\cB$. Since this holds for all $f\in C(\ol{X})$ such that
$f(a)=1$ we conclude that $a\notin V$ and $V\sub X\cup\ha{S}$.

The final statement of the theorem follows from the fact that $S\in
\cF$ implies $1\notin \cJ_S$.
\end{proof}

\begin{corollary} If $S\in\cF$ then
\[
\cB/(\cJ_S\cap\cB)\simeq C(\pd{X}\backslash \ha{S}).
\]
\end{corollary}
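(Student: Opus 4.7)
The plan is to combine the preceding theorem with the standard identification between quotients of $C(Y)$ by ideals of functions vanishing on a closed subset and function algebras on that subset.

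First I would invoke the preceding theorem, which gives $V_{\cJ_S\cap\cB}=\ha{S}\cup X$. Under the order-preserving bijection between ideals in $\cB=C(\ol{X})$ and open subsets of $\ol{X}$ set up earlier, this translates to
\[
\cJ_S\cap\cB=\cI_{\ha{S}\cup X}=\{ f\in C(\ol{X}):f\vert_{\ol{X}\backslash(\ha{S}\cup X)}=0\}.
\]
Next I would identify the complementary closed set: since $X$ is open in $\ol{X}$, the boundary $\pd X$ is closed in $\ol{X}$, and $\ha{S}$ is open in $\pd X$, so $K:=\pd{X}\backslash\ha{S}=\ol{X}\backslash(\ha{S}\cup X)$ is a closed (hence compact) subset of $\ol{X}$. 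Thus
\[
\cJ_S\cap\cB=\{f\in C(\ol{X}):f\vert_K=0\}.
\]

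Now I would apply the standard fact (a consequence of the Tietze extension theorem applied to the compact Hausdorff space $\ol{X}$) that the restriction map $\rho:C(\ol{X})\to C(K)$, $\rho(f)=f\vert_K$, is a surjective $^\ast$-homomorphism whose kernel is precisely the ideal of functions vanishing on $K$. The first isomorphism theorem for \ca s then yields
\[
\cB/(\cJ_S\cap\cB)=C(\ol{X})/\Ker(\rho)\simeq C(K)=C(\pd{X}\backslash\ha{S}),
\]
which is the stated isomorphism. The final statement of the preceding theorem ($S\in\cF$ implies $\ha{S}\cup X\neq\ol{X}$) guarantees that $K$ is non-empty, so the quotient is non-trivial.

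The only real step requiring care is verifying surjectivity of the restriction map, but this is precisely Tietze extension for the compact Hausdorff space $\ol{X}$, so I do not expect a genuine obstacle; the bulk of the work has already been carried out in the preceding theorem, and this corollary is essentially a repackaging via Gelfand duality.
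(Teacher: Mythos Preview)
Your proof is correct and matches the paper's intended approach: the corollary is stated without proof in the paper, as it follows immediately from the preceding theorem (which identifies $V_{\cJ_S\cap\cB}=\ha{S}\cup X$) together with the standard Gelfand-theoretic correspondence $\cI_V\leftrightarrow V$ set up just before that theorem. Your explicit invocation of Tietze extension to justify surjectivity of the restriction map is exactly the routine step the paper leaves implicit.
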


\section{Pseudo-resolvents\label{pseudo}}

If one has a family of resolvent operators $R(z,A)$ all lying in a \ca\ $\cA$ and $\pi:\cA\to\cB$ is an algebra homomorphism with a non-trivial kernel $\cJ$, then $\pi(R(z))$ satisfy the resolvent equations in $\cB$. In this section we show how to define the spectrum of this new family, which is not the resolvent family of any obvious operator. This will be a crucial ingredient of our general theory.

Let $\cU$ denote the set of invertible elements of an associative
algebra $\cA$ with identity. If $a\in\cA$ the spectrum of $a$ is
defined by
\[
\sig(a)=\{\alp\in \C: \alp 1-a\notin\cU\}.
\]
If we put $U=\C\backslash \sig(a)$ and define $r:U\to \cA$ by
$r_z=(z1-a)^{-1}$ then $r$ satisfies the resolvent equations
\begin{eq}
r_\alp -r_\gam =(\gam -\alp )r_\alp r_\gam \label{psr}
\end{eq}
for all $\alp,\, \gam \in U$. Moreover
\[
1+(\gam-\alp)r_\alp=(\gam 1-a)r_\alp
\]
so $\sig(a)=\{ z:1+(z-\alp)r_\alp\notin \cU\}$.

Our goal in this section is to define the spectrum of a
pseudo-resolvent, defined as a function $r:U\to \cA$ that satisfies
(\ref{psr}) even though it is not generated by any $a\in \cA$.

If $A$ is a closed, unbounded operator on a Banach space $\cB$ and
$R(z,A)$ denotes its family of resolvent operators, defined for all
$z\notin \sig(A)$, then
\begin{equation}
\sig(R(z,A))=\{0\}\cup\{ (z-s)^{-1}:s\in\sig(A)\}\label{resolvspec}
\end{equation}
by \cite[Lemma 8.1.9]{LOTS}. This motivates our analysis, which is,
however, purely algebraic, making no reference to Banach spaces or
to unbounded operators. The advantage of this is that the results
are immediately applicable to quotient algebras $\cA/\cJ$, for which
no geometric interpretation exists.

\begin{theorem}\label{psrtheorem}
If $U\subseteq \C$ and $r:U\to\cA$ is a pseudo-resolvent and $\alp
\in  U$ then $1+(z-\alp)r_\alp\in \cU$ for all $z\in U$. If
\[
\ti{U}=\{ z: 1+(z-\alp)r_\alp\in \cU\}
\]
then $U\subseteq \ti{U}$ and the formula
\[
\ti{r}_z=r_\alp(1+(z-\alp)r_\alp)^{-1}
\]
defines an extension of the pseudo-resolvent from $U$ to $\ti{U}$.
Moreover $\ti{r}:\ti{U}\to\cA$ is a maximal pseudo-resolvent. The
set $\sig(r)=\C\backslash \ti{U}$ is called the spectrum of the
pseudo-resolvent $r$ and satisfies
\begin{eq}
\sig(r)=\{z:1+(z-\alp)r_\alp\notin \cU \}\label{rspec}
\end{eq}
for every choice of $\alp\in U$.
\end{theorem}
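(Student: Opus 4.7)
The strategy is to work purely algebraically, exploiting the resolvent equation (\ref{psr}) to generate invertibility identities. First I would record the standard consequence that $r_\alp r_\gam = r_\gam r_\alp$ for all $\alp,\gam \in U$: writing (\ref{psr}) with the roles of $\alp$ and $\gam$ swapped and subtracting yields $(\alp-\gam)(r_\alp r_\gam - r_\gam r_\alp) = 0$. Next, for any $\alp, z \in U$, I would verify by direct multiplication that
\[
\bigl(1+(z-\alp)r_\alp\bigr)\bigl(1-(z-\alp)r_z\bigr) = 1 = \bigl(1-(z-\alp)r_z\bigr)\bigl(1+(z-\alp)r_\alp\bigr),
\]
where the cross-terms cancel because (\ref{psr}) gives $(z-\alp)(r_\alp - r_z) = (z-\alp)^2 r_\alp r_z$. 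This shows $1+(z-\alp)r_\alp \in \cU$, hence $U \subseteq \ti{U}$, and substituting the explicit inverse $1-(z-\alp)r_z$ into the definition of $\ti{r}_z$ recovers $r_z$, so the formula really does extend $r$.

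To see that $\ti{r}$ satisfies the pseudo-resolvent equation on $\ti{U}$, I would write $s_i := 1+(z_i-\alp)r_\alp$ and note that $s_i$ is a polynomial in $r_\alp$, so $s_i^{-1}$ commutes with $r_\alp$ and with $s_j^{-1}$. Since $s_1 - s_2 = (z_1-z_2) r_\alp$, the chain
\[
\ti{r}_{z_1} - \ti{r}_{z_2} = r_\alp(s_1^{-1}-s_2^{-1}) = r_\alp s_1^{-1}(s_2-s_1)s_2^{-1} = (z_2-z_1)\ti{r}_{z_1}\ti{r}_{z_2}
\]
delivers (\ref{psr}) for $\ti{r}$.

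The key tool for the remaining assertions is the change-of-base-point identity
\[
1+(z-\bet)\ti{r}_\bet = \bigl(1+(z-\alp)r_\alp\bigr)\bigl(1+(\bet-\alp)r_\alp\bigr)^{-1},
\]
valid for every $\bet \in \ti{U}$ and $z \in \C$, which I would derive by substituting $\ti{r}_\bet = r_\alp(1+(\bet-\alp)r_\alp)^{-1}$ and combining the numerators over the common denominator. Since $\bet \in \ti{U}$ forces the right-hand factor to be invertible, the product on the right is in $\cU$ if and only if $1+(z-\alp)r_\alp$ is. Specializing $\bet \in U$ shows that $\ti{U}$ is independent of the choice of $\alp \in U$, validating both the definition $\sig(r) = \C \setminus \ti{U}$ and formula (\ref{rspec}). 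Taking $\bet \in \ti{U}$ arbitrary and applying the same construction to $\ti{r}$ with base point $\bet$ shows that the would-be enlarged domain again coincides with $\ti{U}$, which is exactly the maximality assertion.

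The main obstacle is spotting the change-of-base-point identity in the right form; once it is in hand, independence of the base point and maximality both collapse to a single algebraic line, and nothing deeper than associativity of multiplication is used anywhere in the argument.
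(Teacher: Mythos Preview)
Your argument is correct and follows the same route as the paper: both establish commutativity, exhibit $1+(\alp-z)r_z$ as the explicit inverse of $1+(z-\alp)r_\alp$, verify that $\ti{r}$ extends $r$, and derive the resolvent identity for $\ti{r}$ by the same algebraic manipulation you carry out in the $s_i$ notation. Your change-of-base-point identity is precisely the computation the paper performs implicitly.

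One sentence needs tightening. Showing that ``applying the same construction to $\ti{r}$ with base point $\bet$'' returns $\ti{U}$ only says that \emph{this particular formula} does not enlarge the domain; it does not by itself rule out an arbitrary pseudo-resolvent extension $\ha{r}$ on some $\ha{U}\supsetneq\ti{U}$. The missing line, which the paper makes explicit, is: given such a $\ha{r}$, apply your first paragraph to the pseudo-resolvent $\ha{r}$ with base point $\alp\in U\subseteq\ha{U}$, obtaining $1+(z-\alp)\ha{r}_\alp\in\cU$ for every $z\in\ha{U}$; since $\ha{r}_\alp=r_\alp$ this forces $\ha{U}\subseteq\ti{U}$. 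All the ingredients are already in your write-up, so this is a matter of phrasing rather than a missing idea.
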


\begin{proof} By interchanging the labels $\alp ,\, \gam $ in (\ref{psr})
we see that $r_\alp $ and $r_\gam $ commute. Moreover
\begin{eqnarray*}
(1+(\gam -\alp )r_\alp )(1+(\alp -\gam )r_\gam )&=& 1+(\gam -\alp )\left\{ r_\alp -r_\gam -(\gam -a)r_\alp r_\gam \right\}\\
&=& 1,
\end{eqnarray*}
so both terms on the left hand side are invertible. This proves that
$U\subseteq \ti{U}$. If $\alp,\, z\in U$ then (\ref{psr}) implies
that
\[
r_\alp=r_z(1+(z-\alp)r_\alp )
\]
so $\ti{r}_z=r_z$ for all $z\in U$ and $\ti{r}$ is an extension of
$r$ to $\ti{U}$.

If $\bet ,\, \gam \in \ti{U}$ then
\begin{eqnarray*}
(\gam -\bet )r_\bet r_\gam &=& (\gam r-\bet r)r( 1+(\bet -\alp )r )^{-1}( 1+(\gam -\alp )r )^{-1}\\
&=& \left\{ ( 1+(\gam -\alp )r )-( 1+(\bet -\alp )r )\right\} r( 1+(\bet -\alp )r )^{-1}( 1+(\gam -\alp )r)^{-1}\\
&=& r\left\{ ( 1+(\bet -\alp )r )^{-1}- ( 1+(\gam -\alp )r)^{-1}\right\} \\
&=&r_\bet -r_\gam .
\end{eqnarray*}
Therefore $\ti{r}$ is a pseudo-resolvent on $\ti{U}$.

Now let $\ha{r}$ be a further extension of $\ti{r}$ to a
pseudo-resolvent on $\ha{U}\supseteq \ti{U}$. If $z\in \ha{U}$ then
by the first half of this proof $1+(z-\alp)r_\alp\in \cU$, so $z\in
\ti{U}$. Therefore $\ha{U}=\ti{U}$ and $\ti{r}:\ti{U}\to\cA$ is a
maximal pseudo-resolvent.

We have proved that $\ti{U}=\{z:1+(z-\alp)r_\alp\in \cU \}$ for all
$\alp\in \ti{U}$, and this proves (\ref{rspec}).
\end{proof}

\begin{corollary}\label{psrcorollary}
Let $\cJ$ be a two-sided ideal in the associative algebra $\cA$ with
identity and let $\pi:\cA\to \cA / \cJ$ be the quotient map. If
$U\sub \C$ and $r:U\to\cA$ is a maximal pseudo-resolvent then
\[
\sig(\pi(r))\subseteq \sig(r).
\]
\end{corollary}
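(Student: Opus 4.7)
The plan is to deduce this from Theorem~\ref{psrtheorem} by a short algebraic argument, using only two facts: that a unital algebra homomorphism carries pseudo-resolvents to pseudo-resolvents, and that it carries invertible elements to invertible elements.

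First I would observe that $\pi\comp r:U\to \cA/\cJ$ is itself a pseudo-resolvent. Indeed, applying the homomorphism $\pi$ to the identity
\[
r_\alp - r_\gam = (\gam-\alp)r_\alp r_\gam \qquad(\alp,\gam\in U)
\]
yields the resolvent equation for the family $\{\pi(r_z)\}_{z\in U}$ in $\cA/\cJ$. By Theorem~\ref{psrtheorem} this pseudo-resolvent admits a (unique) maximal extension whose spectrum I shall denote $\sig(\pi(r))$; note that the hypothesis that $r$ is maximal is irrelevant for this construction, but the characterization~(\ref{rspec}) gives a convenient common formula for both spectra below.

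Next I would fix any element $\alp\in U$. Applying~(\ref{rspec}) to $r$ and to $\pi(r)$ with this common choice of $\alp$ gives
\[
\sig(r)=\{z\in\C:1+(z-\alp)r_\alp\notin \cU(\cA)\},
\]
\[
\sig(\pi(r))=\{z\in\C:1+(z-\alp)\pi(r_\alp)\notin \cU(\cA/\cJ)\}.
\]
The inclusion $\sig(\pi(r))\subseteq\sig(r)$ is equivalent to its contrapositive: if $z\notin\sig(r)$ then $z\notin\sig(\pi(r))$. But if $1+(z-\alp)r_\alp$ is invertible in $\cA$ with inverse $b$, then applying $\pi$ to $b(1+(z-\alp)r_\alp)=(1+(z-\alp)r_\alp)b=1$ shows that $\pi(b)$ is a two-sided inverse of $1+(z-\alp)\pi(r_\alp)$ in $\cA/\cJ$. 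Hence $z\notin\sig(\pi(r))$, which is what was required.

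I do not anticipate any real obstacle: the whole content is that unital algebra homomorphisms preserve invertibility, combined with the characterization of pseudo-resolvent spectra in~(\ref{rspec}). The one point worth flagging explicitly is that the formula~(\ref{rspec}) holds for \emph{every} $\alp$ in the domain, which is exactly what allows us to use the same parameter $\alp\in U$ on both sides and obtain the containment without ever having to compare the (possibly different) maximal domains of $r$ and $\pi\comp r$.
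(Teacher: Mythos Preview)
Your proof is correct and essentially the same as the paper's. The paper phrases it in terms of domains---since $r$ is maximal, $\sig(r)=\C\setminus U$, and the maximal domain $V$ of $\pi\comp r$ trivially contains $U$, giving $\sig(\pi(r))=\C\setminus V\subseteq\C\setminus U$---whereas you unpack this via the characterization~(\ref{rspec}) and the preservation of invertibility under $\pi$; the content is identical.
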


\begin{proof} We need only observe that $z\to \pi(r_z)$ is a
pseudo-resolvent in $\cA/\cJ$ but its domain $U$ need not be
maximal. If its maximal extension has domain $V\supseteq U$ then
\[
\sig (\pi(r))=\C\backslash V\subseteq \C\backslash U =\sig(r).
\]
\end{proof}

\section{Perturbation theory\label{sectF}}

When extending the theory of Section~\ref{sectD} to differential
operators, one has to be careful not to refer to strong operator
convergence, because the standard \ca\ $\cA$ is only closed under
norm convergence. In this section we collect some of the technical
results that will be needed. These are formulated at the natural
level of generality, but the reader should keep in mind that they
will be applied to a resolvent operator $A$ acting in $L^2(\R^d,
\rmd x)$.

Let $X$ be a set with a countably generated $\sig$-field and a
$\sig$-finite measure $\mu$, and put $L^2=L^2(X, \mu)$.

\begin{lemma}\label{posA1}
Let $A$ be a linear operators $A$ on $L^2$ that is
positive in the sense that if $0\leq \phi\in L^2$ then $0\leq
A\phi\in L^2$. Then $A$ is bounded and
\[
\norm A\norm =\sup\{\norm A\phi\norm:0\leq \phi\in L^2 \mbox{ and }
\norm \phi\norm\leq 1\}<\infty.
\]
Moreover $|A(\phi)|\leq A(|\phi|)$ for all $\phi\in L^2$.
\end{lemma}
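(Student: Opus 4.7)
The plan is to split the proof into three steps: (i) show $A$ preserves the real-valued subspace of $L^2$; (ii) prove the pointwise inequality $|A\phi|\le A|\phi|$ for all $\phi\in L^2$; and (iii) deduce both boundedness and the norm identity from (ii) combined with a gliding-hump contradiction that handles the non-negative unit ball.

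For (i)-(ii), I would first handle real $\phi\in L^2$ by decomposing $\phi=\phi_+-\phi_-$ with $\phi_\pm\ge 0$. By hypothesis $A\phi_\pm\ge 0$ are in $L^2$, so $A\phi = A\phi_+-A\phi_-$ is real and $|A\phi|\le A\phi_++A\phi_-=A|\phi|$. For a complex $\phi=u+iv$ this shows $A\phi=Au+iAv$ with $Au,Av$ real, and a short computation using $\C$-linearity gives $\Re(\bar\alpha A\phi)=A(\Re(\bar\alpha\phi))$ a.e.\ for each $\alpha\in\T$. Since $\Re(\bar\alpha\phi)\le|\phi|$ pointwise, positivity of $A$ yields $\Re(\bar\alpha A\phi)\le A|\phi|$ a.e. To upgrade this to the pointwise inequality, I would fix a countable dense $\{\alpha_n\}\subseteq\T$, take the countable union of the exceptional null sets, and invoke the continuity of $\alpha\mapsto\Re(\bar\alpha z)$ together with $\sup_{|\alpha|=1}\Re(\bar\alpha z)=|z|$ to conclude $|A\phi|=\sup_n\Re(\bar\alpha_n A\phi)\le A|\phi|$ a.e.

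For (iii), set $M=\sup\{\norm A\phi\norm:\phi\in L^2,\,\phi\ge 0,\,\norm\phi\norm\le 1\}$. If $M=\infty$, choose $\phi_n\ge 0$ with $\norm\phi_n\norm\le 1$ and $\norm A\phi_n\norm\ge 4^n$, and set $\psi=\sum_{n=1}^\infty 2^{-n}\phi_n\in L^2$. Then $\psi\ge 0$, hence $A\psi\in L^2$ by hypothesis. For each $n$ the element $\psi-2^{-n}\phi_n$ is non-negative, so by linearity and positivity $A\psi\ge 2^{-n}A\phi_n$ a.e., giving $\norm A\psi\norm\ge 2^n$ for every $n$ -- a contradiction. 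Once $M<\infty$, the pointwise inequality from (ii) gives $\norm A\phi\norm^2\le\int(A|\phi|)^2=\norm A|\phi|\norm^2\le M^2\norm\phi\norm^2$ for every $\phi\in L^2$, since $|\phi|$ has the same $L^2$-norm as $\phi$. Hence $\norm A\norm\le M$, and the reverse inequality is immediate from the definition of $M$, yielding the claimed identity.

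The main delicate step is the complex case of (ii): an uncountable supremum over $\alpha\in\T$ has no well-defined meaning pointwise a.e.\ without some care, and the countable-dense/continuity device is what converts the one-parameter family of a.e.\ inequalities into a single a.e.\ inequality. Everything else is bookkeeping with positivity and linearity, plus the observation that the gliding-hump argument uses positivity in its monotone form ($\psi\ge\phi\ge 0\Rightarrow A\psi\ge A\phi$), which is itself a consequence of the stated hypothesis and linearity.
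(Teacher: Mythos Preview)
The paper does not give a proof of this lemma; it simply cites Lemma~13.1.1 and Theorem~13.1.2 of the author's textbook \emph{Linear Operators and Their Spectra}. Your argument is correct and self-contained: the decomposition $\phi=\phi_+-\phi_-$ handles the real case, the rotation trick $\Re(\bar\alpha A\phi)=A(\Re(\bar\alpha\phi))\le A|\phi|$ combined with a countable dense subset of $\T$ handles the complex case, and the gliding-hump contradiction (using only finite linearity and the monotonicity $0\le g\le\psi\Rightarrow 0\le Ag\le A\psi$, never the unproven countable additivity of $A$) forces $M<\infty$. This is the standard proof and almost certainly coincides with the cited textbook argument; in any case it fully supplies what the paper omits.
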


\begin{proof} See \cite[Lemma~13.1.1 and Theorem~13.1.2]{LOTS}.
\end{proof}

In the following discussion $V$ will always denote an unbounded
measurable function $V:X\to\C$, which we call a potential, and also
its associated multiplication operator. Given a positive operator $A$, let
$\tilde{\cV}_A$ denote the set of potentials $V$ that are relatively
bounded with respect to $A$ in the sense that
\[
\norm V\norm_A=\sup\{ \norm V(A\phi)\norm:\norm \phi\norm\leq 1\}
\]
is finite.

\begin{lemma}\label{posA2}
We have $\norm V\norm_A\leq \norm A\norm\,\norm V\norm_\infty$ for
all $V\in \ L^\infty$. Therefore $L^\infty(X)\subseteq
\tilde{\cV}_A$. If $|W|\leq |V|$ and $V\in\tilde{\cV}_A$ then
$W\in\tilde{\cV}_A$ and $\norm W\norm_A\leq\norm V\norm_A$. The
space $\tilde{\cV}_A$ is a Banach space with respect to the norm
$\norm \cdot\norm_A$.
\end{lemma}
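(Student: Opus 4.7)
The plan is to dispatch the pointwise inequalities first and then reduce the Banach space claim to the closure of the isometric embedding $V\mapsto T_V$, where $T_V\phi=V(A\phi)$, from $\tilde{\cV}_A$ into $\cL(L^2)$. For the first bound, given $\phi$ with $\norm\phi\norm\leq 1$ one has $|V(A\phi)(x)|\leq \norm V\norm_\infty |(A\phi)(x)|$ pointwise, so $\norm V(A\phi)\norm\leq \norm V\norm_\infty\norm A\norm$ by Lemma~\ref{posA1}; taking the supremum yields $\norm V\norm_A\leq \norm A\norm\,\norm V\norm_\infty$, hence $L^\infty(X)\subseteq\tilde{\cV}_A$. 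The monotonicity in $|V|$ follows similarly: $|W|\leq|V|$ gives $|W(x)(A\phi)(x)|\leq |V(x)(A\phi)(x)|$ a.e., hence $\norm W(A\phi)\norm\leq \norm V(A\phi)\norm$ and thus $\norm W\norm_A\leq \norm V\norm_A$.

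For the Banach space claim, note $\norm T_V\norm_{\cL(L^2)}=\norm V\norm_A$ directly from the definition, so $V\mapsto T_V$ is an isometric linear map into $\cL(L^2)$. This immediately gives linearity, the triangle inequality and homogeneity of $\norm\cdot\norm_A$; positive definiteness holds once we identify potentials agreeing on the essential support of the range of $A$. Given a Cauchy sequence $\{V_n\}$, the operators $T_{V_n}$ converge in operator norm to some $T\in\cL(L^2)$, and completeness amounts to writing $T=T_V$ for a measurable potential $V$.

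The main obstacle will be identifying this $V$. My approach is to select $\phi_0=\sum_j a_j|\phi_j|$ for a countable dense sequence $\{\phi_j\}\subseteq L^2$ with $a_j>0$ chosen summable; positivity of $A$ then gives $A\phi_0\geq a_j|A\phi_j|$ for each $j$ by Lemma~\ref{posA1}, and a density argument combined with $L^2$-to-pointwise subsequence extraction shows that the set $E=\{A\phi_0>0\}$ contains the essential support of $\{A\phi:\phi\in L^2\}$ up to a null set. Extracting a subsequence along which $V_{n_k}(A\phi_0)\to T\phi_0$ pointwise a.e., one defines $V=T\phi_0/A\phi_0$ on $E$ and zero elsewhere, which is a well-defined measurable function. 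A further diagonal subsequence argument then establishes $V_{n_k}(A\phi)\to V(A\phi)$ a.e.\ on $E$ for every $\phi\in L^2$, while on $X\setminus E$ both sides vanish by the construction of $\phi_0$, so $T\phi=V(A\phi)$ and $V\in\tilde{\cV}_A$ with $\norm V\norm_A=\norm T\norm$. The hardest step here is the density/positivity argument showing that $E$ absorbs the essential range of $A$; without it, $V$ would only be determined on a strictly smaller set and the candidate norm would degenerate to a seminorm.
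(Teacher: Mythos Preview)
Your argument is correct and follows the same overall strategy as the paper: identify the essential support $E$ of the range of $A$, extract a pointwise a.e.\ limit $V$ on $E$, and verify norm convergence. The paper streamlines two of your steps. First, instead of summing $\sum_j a_j|\phi_j|$ over a dense sequence, it simply picks any $\xi\in L^2$ with $\xi(x)>0$ a.e.\ and sets $\psi=A\xi$; the truncations $|\phi_n|\leq n\xi$ then give $|A\phi_n|\leq n\psi$ directly by positivity of $A$, so $E=\{\psi>0\}$ absorbs the range of $A$ without any density argument. Second, rather than first identifying the abstract operator-norm limit $T$ and then showing $T=T_V$, the paper applies Fatou's lemma to $\norm (V_m-V_{n(r)})(A\phi)\norm\leq\eps\norm\phi\norm$ along the a.e.-convergent subsequence to obtain $\norm (V_m-V)(A\phi)\norm\leq\eps\norm\phi\norm$ in one stroke. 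Incidentally, your ``further diagonal subsequence argument'' is unnecessary: once $V_{n_k}\to V$ a.e.\ on $E$ (which already follows from $V_{n_k}(A\phi_0)\to T\phi_0$ a.e.\ and $A\phi_0>0$ on $E$), pointwise convergence of $V_{n_k}(A\phi)$ to $V(A\phi)$ on $E$ is automatic for every $\phi$.
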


\begin{proof} The last statement is the only one that is not elementary.
Let $\xi\in L^2$ satisfy $\norm \xi\norm_2 =1$ and $\xi(x)>0$ almost
everywhere in $X$ and let $\psi=A\xi$, so that $\psi\geq 0$. The
exists a measurable set $E$ such that $\psi(x)>0$ almost everywhere
in $E$ and $\psi(x)=0$ almost everywhere in $X\backslash E$. In many
cases $E=X$ but we do not assume this. If $\phi\in L^2$ and
\[
\phi_n(x)=\begin{choices}
\phi(x)&\mbox{if $|\phi(x)|\leq n\xi(x)$, }\\
\frac{n\xi(x)\phi(x)}{|\phi(x)|} &\mbox{ otherwise,}
\end{choices}
\]
then $|\phi_n|\leq |\phi|$ and $|\phi_n|\leq n\xi$. The dominated
convergence theorem implies that $\norm \phi_n-\phi\norm_2\to 0$ as
$n\to\infty$. Moreover
\[
|A(\phi_n)|\leq A(|\phi_n|)\leq A(n\xi)=n\psi
\]
so $A(\phi_n)$ has support in $E$. Letting $n\to\infty$ we conclude
that the same holds for $A(\phi)$. We conclude that if $V$ has
support in $X\backslash E$ then $ VA=0$, so we focus attention
henceforth on the restriction of all the potentials involved to $E$.

We next observe that $\norm V\psi\norm_2\leq \norm V\norm_A$ so if
$V_n$ is a Cauchy sequence in $\tilde{\cV}_A$ then $V_n\psi$ is a
Cauchy sequence in $L^2(E,\mu)$. Therefore $V_n\psi$ converges in
$L^2$ norm to a limit $V\psi$ in $L^2(E,\mu)$. There exists a
subsequence $n(r)$ such that  $V_{n(r)}$ converges almost everywhere
in $E$ to $V$.

Given $\eps>0$ there exists $N_\eps$ such that for all $m,\, n \geq
N_\eps$ we have
\[
\norm (V_m-V_n)(A\phi)\norm_2\leq \eps\norm\phi\norm_2
\]
for all $\phi\in L^2$. Replacing $n$ by $n(r)$, letting $r\to\infty$
and using Fatou's lemma we obtain
\[
\norm (V_m-V)(A\phi)\norm_2\leq \eps\norm\phi\norm_2.
\]
for all $m\geq N_\eps$ and all $\phi\in L^2$. Hence $V\in
\tilde{\cV}_A$ and $\norm V_m-V\norm_A\to 0$ as $r\to\infty$.
\end{proof}

Now let $\cV_A$ denote the closure of $L^\infty$ in $\tilde{\cV}_A$.

\begin{lemma}\label{posA3}
If $V\in \tilde{\cV}_A$ then $V\in \cV_A$ if and only if
$\lim_{n\to\infty}\norm V^{(n)}-V\norm_A=0$ where
\[
V^{(n)}(x)=\begin{choices}
V(x)&\mbox{if $|V(x)|\leq n$,}\\
\frac{nV(x)}{|V(x)|}&\mbox{otherwise.}
\end{choices}
\]
If $|W|\leq |V|$ and $V\in\cV_A$ then $W\in\cV_A$.
\end{lemma}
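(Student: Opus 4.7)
The plan is to prove both directions of the equivalence and then the monotonicity statement, using the key identity $|V - V^{(n)}| = (|V|-n)_+$ together with the monotonicity of $\norm\cdot\norm_A$ already established in Lemma~\ref{posA2}.

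For the ``if'' direction, each $V^{(n)}$ is bounded (by $n$), so if $V^{(n)} \to V$ in $\norm\cdot\norm_A$, then $V$ lies in the closure of $L^\infty$ in $\tilde{\cV}_A$ by definition, i.e., $V\in\cV_A$. The ``only if'' direction is the main content. Given $V\in\cV_A$ and $\eps>0$, I would pick $W\in L^\infty$ with $\norm V-W\norm_A<\eps$ and set $M=\norm W\norm_\infty$. The crucial pointwise comparison is that for every $n\geq M$,
\[
|V(x) - V^{(n)}(x)| \leq |V(x)-W(x)|
\]
almost everywhere. Indeed, on $\{|V|\leq n\}$ the left side vanishes; on $\{|V|>n\}$ we have $|V-V^{(n)}| = |V|-n$ while $|V-W| \geq |V|-|W| \geq |V|-M \geq |V|-n$. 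Applying the monotonicity half of Lemma~\ref{posA2} to the two nonnegative functions then gives $\norm V-V^{(n)}\norm_A \leq \norm V-W\norm_A < \eps$ for all $n\geq M$, which is the claim.

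For the second statement, assume $|W|\leq |V|$ and $V\in\cV_A$. By Lemma~\ref{posA2}, $W\in\tilde{\cV}_A$ with $\norm W\norm_A\leq \norm V\norm_A$. Each $W^{(n)}\in L^\infty$, so by the criterion just proved, it suffices to show $W^{(n)}\to W$ in $\norm\cdot\norm_A$. This follows from the same pointwise idea: $|W-W^{(n)}| = (|W|-n)_+ \leq (|V|-n)_+ = |V-V^{(n)}|$, and one more application of the monotonicity property gives $\norm W-W^{(n)}\norm_A \leq \norm V-V^{(n)}\norm_A$, which tends to $0$ by the first part of the lemma applied to $V$.

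The only step with any content is the pointwise bound $|V-V^{(n)}|\leq |V-W|$ for $n\geq \norm W\norm_\infty$; once this is observed, everything else is just the monotonicity and the definition of $\cV_A$. There is no serious obstacle, but one must be slightly careful that $V^{(n)}$ is defined so that $|V-V^{(n)}|$ is exactly $(|V|-n)_+$ and not a different truncation (for instance, truncating real and imaginary parts separately would not give the clean pointwise inequality above).
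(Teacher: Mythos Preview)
Your proof is correct and follows essentially the same route as the paper's: both arguments rest on the pointwise inequality $|V-V^{(n)}|\leq |V-W|$ for $n\geq \norm W\norm_\infty$ (the paper phrases this via a sequence $W_n$ with $\norm W_n\norm_\infty\leq n$, you via a single $W$ and $n\geq M$), and then invoke the monotonicity from Lemma~\ref{posA2}; the second statement is handled identically via $|W-W^{(n)}|\leq |V-V^{(n)}|$. Your write-up is in fact a bit more explicit than the paper's, which simply says the pointwise bound follows ``by carrying out a separate calculation at every $x\in X$''.
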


\begin{proof}  If $\norm W_n\norm_\infty\leq n$ and $\norm V-W_n\norm_A\to
0$ as $n\to\infty$ then by carrying out a separate calculation at
every $x\in X$ we see that
\[
|V-V^{(n)}|\leq |V-W_n|
\]
Lemma~\ref{posA2} now implies that
\[
\lim_{n\to\infty}\norm V-V^{(n)}\norm_A\leq \lim_{n\to\infty}\norm
V-W_n\norm_A=0.
\]
The second statement follows in a similar way from the inequality
\[
|W-W^{(n)}|\leq |V-V^{(n)}|.
\]
\end{proof}

\begin{lemma}\label{posA4} If $0\leq A \leq B$ as operators on $L^2$, in the sense that $0\leq A\phi\leq B\phi$ for all $\phi$ such that $0\leq \phi\in L^2$, then $\cV_B\subseteq \cV_A$.
\end{lemma}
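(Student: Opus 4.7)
The plan is to reduce the closure statement $\cV_B \subseteq \cV_A$ to the quantitative inequality $\norm V\norm_A \le \norm V\norm_B$ for every measurable potential $V$, and then finish by a one-line approximation argument using the definition of $\cV_A$ as the closure of $L^\infty$ in $\tilde{\cV}_A$.

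The main ingredient is a pointwise domination. Lemma~\ref{posA1} provides $|A\phi| \le A(|\phi|)$ almost everywhere for every $\phi\in L^2$, and applying the hypothesis $0\le A\le B$ to the non-negative function $|\phi|$ gives $A(|\phi|)\le B(|\phi|)$. Chaining these yields $|A\phi| \le B(|\phi|)$ pointwise a.e. Multiplying by $|V|$ and taking the $L^2$ norm then gives
\[
\norm V(A\phi)\norm_2 \;\le\; \norm\,|V|\,B(|\phi|)\,\norm_2 \;\le\; \norm V\norm_B\,\norm \phi\norm_2,
\]
where the second inequality applies the definition of $\norm V\norm_B$ to the vector $|\phi|$, whose $L^2$ norm equals that of $\phi$. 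Taking the supremum over $\norm \phi\norm_2\le 1$ yields $\norm V\norm_A \le \norm V\norm_B$, so in particular $\tilde{\cV}_B\subseteq \tilde{\cV}_A$ as a continuous inclusion.

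To finish, given $V\in\cV_B$ choose $W_n\in L^\infty$ with $\norm W_n-V\norm_B\to 0$; the estimate above forces $\norm W_n-V\norm_A\to 0$ as well, so $V$ lies in the closure of $L^\infty$ in $\tilde{\cV}_A$, i.e.\ in $\cV_A$. There is no serious obstacle here; the only point requiring a little care is the chain of pointwise bounds that converts the hypothesis $A\le B$ on non-negative functions into a genuine comparison of $|V\cdot A\phi|$ with $|V|\cdot B(|\phi|)$ for arbitrary complex $\phi$, which is exactly the role played by Lemma~\ref{posA1}.
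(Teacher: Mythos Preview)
Your proof is correct and follows essentially the same route as the paper: the pointwise chain $|V(A\phi)|=|V|\,|A\phi|\le |V|\,A(|\phi|)\le |V|\,B(|\phi|)$ via Lemma~\ref{posA1} and the hypothesis, followed by the norm estimate $\norm V\norm_A\le\norm V\norm_B$. For the closure step the paper invokes the truncation characterization of Lemma~\ref{posA3} rather than an arbitrary approximating sequence $W_n\in L^\infty$, but this is an inessential variation of the same density argument.
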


\begin{proof} If $\phi\in L^2$ and $V\in \tilde{\cV}_B$ then
\[
|V(A\phi)|=|V|\, |A(\phi)|\leq |V|A(|\phi|)\leq |V|B(|\phi|)
\]
so
\[
\norm V(A\phi)\norm_2\leq \norm |V| (B |\phi|)\norm_2%
\leq \norm \,|V|\,\norm_B \norm \, |\phi|\,\norm_2 = \norm V\norm_B
\norm \phi\norm_2
\]
for all $\phi\in L^2$. This implies $\norm V\norm_A\leq \norm
V\norm_B<\infty$ and hence $V\in \tilde{\cV}_A$. The proof of the
lemma is completed as in Lemma~\ref{posA3}.
\end{proof}

We now specialize to the case in which $\cH=L^2(\R^d,\mu)$. Our goal
is to describe certain classes of potential in $\cV_A$, particularly
when $A$ is a positive convolution operator. Such operators arise as
the resolvents of constant coefficient, second order partial
differential operators and in certain other contexts; the reader
primarily interested in \Schrodinger operators should keep
Example~\ref{posA8} in mind.  We will use the classical $L^p$
inequalities due to H\"older, Young, Hausdorff-Young and
Riesz-Thorin without further mention.

\begin{lemma}\label{L1incA}
If $a\in L^1(\R^d)$ then the operator $A$ on $L^2(\R^d)$ defined by
$A\phi=a\ast\phi$ lies in the standard \ca\ $\cA$.
\end{lemma}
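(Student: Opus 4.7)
The plan is to approximate $a \in L^1(\R^d)$ in $L^1$-norm by compactly supported functions $a_n$, observe that the associated convolution operators $A_n$ lie in $\cA_n \subseteq \ti{\cA}$ by a support argument, and conclude that $A \in \cA$ since $\cA$ is norm-closed.

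First I would define $a_n = a \cdot \chi_{B(0,n)}$ and note $\norm a - a_n \norm_1 \to 0$ by dominated convergence. Young's inequality $\norm f \ast g\norm_2 \leq \norm f\norm_1 \, \norm g\norm_2$ shows that $A$ and each $A_n$ are bounded operators on $L^2(\R^d)$ and that $\norm A - A_n\norm \leq \norm a - a_n\norm_1 \to 0$. Hence it suffices to prove $A_n \in \cA_n$ for every $n \geq 1$, after which $A \in \cA$ follows from closure of $\cA$ under norm limits.

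The core step is the support estimate. Suppose $P_{a,r} A_n P_{b,s} \neq 0$; then there exists $\phi \in \cH$ with $\supp \phi \sub B(b,s)$ and a point $x \in B(a,r)$ with $(A_n\phi)(x) = \int a_n(x-y)\phi(y)\,\rmd y \neq 0$. This forces the existence of $y \in B(b,s)$ with $x - y \in \supp a_n \sub \ol{B(0,n)}$, so $|x-y| \leq n$. The triangle inequality then gives
\[
d(a,b) \leq d(a,x) + d(x,y) + d(y,b) < r + n + s,
\]
so $d(a,b) \leq r + s + n$, which is precisely the condition for $A_n \in \cA_n$.

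The main obstacle is essentially bookkeeping: making the support argument above go through cleanly, in particular being careful with the distinction between strict and non-strict inequalities (open balls $B(a,r)$ versus the closed support $\ol{B(0,n)}$ of $a_n$) so that one really lands in $\cA_n$ rather than needing a slightly larger index. Apart from this, no serious difficulty arises; norm convergence $A_n \to A$ from Young's inequality closes the argument.
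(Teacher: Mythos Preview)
Your proposal is correct and follows essentially the same route as the paper: truncate $a$ to $a_n=a\chi_{B(0,n)}$, use the $L^1$ bound on convolution operators (the paper invokes the Schur-type Lemma~\ref{L1normbounds}, you use Young's inequality, which amounts to the same thing here) to get $\norm A-A_n\norm\to 0$, and observe $A_n\in\cA_n$ via the support argument. The paper states the last step more tersely (``the support of $A_n\phi$ must lie within a distance $n$ of the support of $\phi$''), while you spell out the ball-level verification of the $\cA_n$ condition, but the content is identical.
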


\begin{proof} If
\[
a_n(x)=\begin{choices}
a(x)&\mbox{if $|x|\leq n$,}\\
0&\mbox{otherwise}
\end{choices}
\]
and $A_n\phi=a_n\ast\phi$ then
\[
\lim_{n\to\infty}\norm A_n-A\norm\leq \norm a_n-a\norm_1 =0
\]
by Lemma~\ref{L1normbounds}. We combine this with the observation
that $A_n\in\cA_n$, because the support of $A_n\phi$ must lie within
a distance $n$ of the support of $\phi$.
\end{proof}

Let $\cC_d$ denote the set of operators $A$ on $L^2(\R^d,\rmd x)$
given by $A\phi=a\ast \phi$, where $0\leq a\in L^1(\R^d,\rmd x)$.

\begin{lemma}\label{posA5} If $A\in\cC_d$ and $a\in L^p$ for some $1<p\leq 2$ then $L^q\subseteq \cV_A$, where $1/p+1/q=1$.
\end{lemma}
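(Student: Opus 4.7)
The plan is to combine Young's inequality with Hölder's inequality to control $\|V\|_A$ by $\|V\|_q$, and then use the truncation criterion from Lemma~\ref{posA3} together with dominated convergence to upgrade from $\tilde{\cV}_A$ to $\cV_A$.

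First I would prove the key norm bound: for every $V\in L^q$ (viewed as a multiplication operator),
\[
\norm V\norm_A \leq \norm a\norm_p \, \norm V\norm_q.
\]
To see this, let $r$ be defined by $1/r = 1/p - 1/2$. Since $1<p\leq 2$, we have $2\leq r\leq \infty$. Young's inequality applied to $A\phi = a\ast\phi$ with $a\in L^p$ and $\phi\in L^2$ yields
\[
\norm A\phi\norm_r \leq \norm a\norm_p \, \norm \phi\norm_2.
\]
Now $1/q + 1/r = (1-1/p) + (1/p-1/2) = 1/2$, so Hölder's inequality gives
\[
\norm V(A\phi)\norm_2 \leq \norm V\norm_q \, \norm A\phi\norm_r \leq \norm a\norm_p \, \norm V\norm_q \, \norm \phi\norm_2,
\]
proving the bound and, in particular, showing $L^q\subseteq \tilde{\cV}_A$.

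Next I would verify the approximation condition in Lemma~\ref{posA3}. For $V\in L^q$, the truncations $V^{(n)}$ belong to $L^\infty$, and a pointwise check shows
\[
|V - V^{(n)}|(x) = \begin{choices}
0 & \text{if $|V(x)|\leq n$,}\\
|V(x)|-n & \text{if $|V(x)|>n$,}
\end{choices}
\]
so $|V-V^{(n)}|\leq |V|$ pointwise and $V-V^{(n)}\to 0$ pointwise. The dominated convergence theorem then gives $\norm V-V^{(n)}\norm_q \to 0$. Applying the bound from the first step to the potential $V-V^{(n)}\in L^q$ yields
\[
\norm V-V^{(n)}\norm_A \leq \norm a\norm_p \, \norm V-V^{(n)}\norm_q \to 0,
\]
so by Lemma~\ref{posA3} we conclude $V\in\cV_A$.

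I do not expect any serious obstacle: the argument is a routine matching of exponents, and the only thing one has to check carefully is that the intermediate index $r$ produced by Young is exactly the one needed for Hölder to return $L^2$ with the weight $V$ in $L^q$. That the operator is positive and of convolution type plays no role beyond giving us the convolution structure that makes Young applicable; positivity is used implicitly because $\cC_d$ is defined with $0\leq a\in L^1$, but positivity is not needed in the estimates themselves.
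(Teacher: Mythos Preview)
Your argument is correct and follows the same route as the paper: the paper's proof is simply the one-line Young--H\"older estimate $\norm V(a\ast\phi)\norm_2 \leq \norm V\norm_q\,\norm a\norm_p\,\norm\phi\norm_2$, stated without further comment. Your version is in fact more complete, since you explicitly carry out the truncation-and-dominated-convergence step needed to pass from $\tilde{\cV}_A$ to $\cV_A$, which the paper leaves to the reader.
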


\begin{proof} If $V\in L^q$ then
\[
\norm V(a\ast\phi)\norm_2 \leq \norm V\norm_q \norm
a\norm_p\norm\phi\norm_2,
\]
so
\[
\norm V\norm_A\leq \norm V\norm_q \norm a\norm_p.
\]
\end{proof}

\begin{lemma}\label{posA6} If $A\in\cC_d$ and $\ha{a}\in L^p$ where $\ha{a}$ denotes the Fourier transform of $a$ and $2\leq p<\infty$, then $L^p\subseteq \cV_A$.
\end{lemma}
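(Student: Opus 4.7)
The goal is to produce an estimate of the form $\norm V\norm_A\leq \norm \ha{a}\norm_p\,\norm V\norm_p$ for $V\in L^p$, from which both $V\in \ti{\cV}_A$ and the closure condition in Lemma~\ref{posA3} follow at once.

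To derive the estimate I would work on the Fourier side. The convolution identity $\widehat{a\ast\phi}=\ha{a}\,\ha{\phi}$ and Plancherel's theorem give $\norm \ha{\phi}\norm_2=\norm \phi\norm_2$, and H\"older's inequality yields
\[
\norm \ha{a}\,\ha{\phi}\norm_r\leq \norm \ha{a}\norm_p\,\norm \phi\norm_2, \qquad \frac{1}{r}=\frac{1}{p}+\frac{1}{2}.
\]
The assumption $2\leq p<\infty$ forces $1\leq r\leq 2$, so Hausdorff--Young applied to the inverse Fourier transform produces
\[
\norm a\ast\phi\norm_{r'}\leq \norm \ha{a}\norm_p\,\norm \phi\norm_2,
\]
where $r'=2p/(p-2)$, with the convention $r'=\infty$ at $p=2$; in that borderline case the argument degenerates to $\norm a\ast\phi\norm_\infty\leq \norm \ha{a}\norm_2\norm \phi\norm_2$ via Cauchy--Schwarz on the Fourier side. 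A further application of H\"older with the conjugate pair $(p,r')$, which satisfies $1/p+1/r'=1/2$, gives
\[
\norm V(a\ast\phi)\norm_2\leq \norm V\norm_p\,\norm a\ast\phi\norm_{r'}\leq \norm V\norm_p\,\norm \ha{a}\norm_p\,\norm \phi\norm_2,
\]
so $\norm V\norm_A\leq \norm \ha{a}\norm_p\,\norm V\norm_p$, and in particular $L^p\sub \ti{\cV}_A$.

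To finish I would invoke Lemma~\ref{posA3} and verify that the truncations $V^{(n)}$ satisfy $\norm V-V^{(n)}\norm_A\to 0$. Since $|V-V^{(n)}|\leq |V|\in L^p$ and $V^{(n)}\to V$ pointwise, the dominated convergence theorem gives $\norm V-V^{(n)}\norm_p\to 0$; applying the preceding estimate to $V-V^{(n)}\in L^p$ then forces $\norm V-V^{(n)}\norm_A\to 0$, so $V\in \cV_A$. The only subtle point is bookkeeping of exponents; the endpoint $p=2$ requires the minor adjustment noted above but presents no real obstacle.
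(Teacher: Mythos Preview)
Your argument is correct and follows essentially the same route as the paper: both rest on the bound $\norm VA\norm\leq c_{d,p}\norm V\norm_p\norm \ha{a}\norm_p$, which the paper simply cites from \cite[Theorem~5.7.3]{LOTS} while you derive it directly via H\"older and Hausdorff--Young. You also spell out the density step through Lemma~\ref{posA3} and dominated convergence, which the paper leaves implicit; this makes your version more self-contained but not substantively different.
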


\begin{proof} This uses the bound
\begin{eq}
\norm VA\norm\leq c_{d,p}\norm V\norm_p\norm
\ha{a}\norm_p.\label{2Lpnorms}
\end{eq}
See, for example, \cite[Theorem~5.7.3]{LOTS}.
\end{proof}

There are many other
results of a similar type in which both of the $L^p$ norms in
(\ref{2Lpnorms}) are replaced by other choices. See
\cite[Chapter~4]{traceideals} for details.

The following type of bound is used when analyzing multi-body
\Schrodinger operators. The decomposition of $\R^d$ used below may
be combined with a Euclidean rotation of $\R^d$, since this amounts
to a change of coordinate system.

\begin{theorem}\label{posA7} Let $x=(x_1,x_2)\in \R^{d_1}\times \R^{d_2}$ where $d=d_1+d_2$ and suppose that $|V(x_1,x_2)|\leq W(x_1)$ for all $x\in\R^d$ where $W\in L^p(\R^{d_1})$ and $2\leq p<\infty$. Suppose also that $A\in \cC_d$, $B\in \cC_{d_1}$ and
\[
|\ha{a}(\xi_1,\xi_2)|\leq |\ha{b}(\xi_1)|
\]
for all $\xi\in \R^{d}$, where $0\leq b\in L^1(\R^{d_1})$ and
$\ha{b}\in L^p(\R^{d_1})$. Then $V\in \cV_A$.
\end{theorem}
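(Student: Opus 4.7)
The plan is to reduce the bound $\norm V\norm_A<\infty$ to the one-variable inequality (\ref{2Lpnorms}) applied on $\R^{d_1}$, by taking a partial Fourier transform in the $x_2$ variable, and then to upgrade the resulting norm estimate to $V\in\cV_A$ by applying the same argument to the truncation differences $V-V^{(n)}$ and invoking Lemma \ref{posA3}.

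First I would use $|V(x_1,x_2)|\leq W(x_1)$ to majorise $V$ by $\ti{W}(x_1,x_2):=W(x_1)$ and estimate $\norm \ti{W}A\phi\norm_2$ instead. Because $\ti{W}$ depends only on $x_1$, multiplication by $\ti{W}$ commutes with the partial Fourier transform in $x_2$; so Plancherel in $x_2$ reduces the problem to bounding, for each frequency $\xi_2$, the operator on $L^2(\R^{d_1})$ obtained by convolving with $a_{\xi_2}(\cdot):=(\cF_{x_2}a)(\cdot,\xi_2)$ and then multiplying by $W$. The Fourier multiplier on $\R^{d_1}$ of convolution with $a_{\xi_2}$ is the slice $\xi_1\mapsto\ha{a}(\xi_1,\xi_2)$, so applying the one-variable inequality (\ref{2Lpnorms}) to this factor gives the pointwise-in-$\xi_2$ bound $c_{d_1,p}\norm W\norm_p\norm\ha{a}(\cdot,\xi_2)\norm_{L^p(\R^{d_1})}$, and the hypothesis $|\ha{a}(\xi_1,\xi_2)|\leq|\ha{b}(\xi_1)|$ replaces the slice norm by the $\xi_2$-independent quantity $\norm\ha{b}\norm_p$. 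Integrating in $\xi_2$ and using Plancherel once more on the $\phi$ side then yields $\norm V\norm_A\leq c_{d_1,p}\norm W\norm_p\norm\ha{b}\norm_p$, so in particular $V\in\tilde{\cV}_A$.

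To promote this to $V\in\cV_A$, observe that $|V-V^{(n)}|$ vanishes where $|V|\leq n$ and is otherwise at most $|V|\leq W$, so $|V-V^{(n)}|$ is dominated by $W_n(x_1):=W(x_1)\chi_{\{W>n\}}(x_1)$, again a function of $x_1$ alone lying in $L^p(\R^{d_1})$. The same Fourier argument, with $W$ replaced by $W_n$, gives $\norm V-V^{(n)}\norm_A\leq c_{d_1,p}\norm W_n\norm_p\norm\ha{b}\norm_p$, and dominated convergence in $L^p(\R^{d_1})$ sends $\norm W_n\norm_p\to 0$. Lemma \ref{posA3} then delivers $V\in\cV_A$.

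The main technical obstacle is the bookkeeping around the partial Fourier transform: one must justify the Fubini-type measurability of $a_{\xi_2}$ and $\phi_{\xi_2}$ in $\xi_2$, identify the multiplier of the $\R^{d_1}$-convolution with $a_{\xi_2}$ as the slice $\ha{a}(\cdot,\xi_2)$, and verify that (\ref{2Lpnorms}) can be applied to this $\xi_2$-parametrised family with a single constant. None of these steps is deep, but they must be assembled carefully so that the pointwise-in-$\xi_2$ application of the scalar inequality is legitimate and the two Plancherel identities close against each other with the right constants.
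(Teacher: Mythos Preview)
Your argument is correct, and your handling of the upgrade from $\tilde{\cV}_A$ to $\cV_A$ via the truncations $V-V^{(n)}$ and the majorant $W_n=W\chi_{\{W>n\}}$ is exactly right (and in fact more explicit than what the paper writes down).

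The paper, however, avoids the partial Fourier transform entirely by a clean operator factorisation. It observes that the hypothesis $|\ha a(\xi_1,\xi_2)|\leq|\ha b(\xi_1)|$ means one can write $A=(B\otimes I)C$, where $B\otimes I$ is the operator on $L^2(\R^d)$ with Fourier multiplier $\ha b(\xi_1)$ and $C$ is the Fourier multiplier by $d(\xi)=\ha a(\xi)/\ha b(\xi_1)$ (set to $0$ where $\ha b$ vanishes), so that $\norm C\leq 1$. Writing $V=X\ti W$ with $|X|\leq 1$ and $\ti W(x)=W(x_1)$, one gets
\[
\norm VA\norm=\norm X\ti W(B\otimes I)C\norm\leq\norm \ti W(B\otimes I)\norm=\norm WB\norm_{\cL(L^2(\R^{d_1}))}\leq c_{d_1,p}\norm W\norm_p\norm\ha b\norm_p,
\]
the last step being Lemma~\ref{posA6} applied on $\R^{d_1}$. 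This gives the same bound as yours with no measurability or Fubini bookkeeping: the direct-integral decomposition over $\xi_2$ is replaced by the single inequality $\norm C\norm\leq 1$. Your slice-by-slice application of (\ref{2Lpnorms}) is really the fibrewise unpacking of the identity $\ti W(B\otimes I)=(WB)\otimes I$, so the two proofs are equivalent in content; the factorisation just packages it more efficiently and sidesteps the technical obstacles you flagged.
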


\begin{proof} We may write V=XW where $|X|\leq 1$. We may also write $A=BC$
where $\norm C\norm\leq 1$; in fact $C=\cF^{-1} D\cF$ where $\cF$ is
the Fourier transform and $D$ is the operator of multiplication by a
function $d$ with $|d|\leq 1$. Therefore
\[
\norm VA\norm=\norm XWBC\norm\leq \norm WB\norm\leq c\norm W\norm_p
\]
by applying Lemma~\ref{posA6} in $\R^{d_1}$.
\end{proof}

\begin{example}\label{posA8} If $H=-\ol{\lap}$ acting in $L^2(\R^d)$ with the usual domain then $A=(I+H)^{-1}$ is of the form $A\phi=a\ast \phi$ where $0\leq a\in L^1(\R^d)$ and $\ha{a}(\xi)=(1+|\xi|^2)^{-1}$ for all $\xi\in \R^d$. Theorem~\ref{posA7} is applicable in this context because
\[
(1+|\xi|^2)^{-1}\leq(1+|\xi_1|^2)^{-1}
\]
whenever $\xi=(\xi_1,\xi_2)$. One needs to assume that $p\geq 2$ and
$p>d_1/2$.
\end{example}

\section{Applications to differential operators\label{sectE}}

In this section we show that the \ca\ methods developed above can be
used to study the spectra of certain differential operators. Instead
of trying to study $\sig(A)$ directly we may redirect our attention
to the spectrum of one of its resolvent operators by virtue of the
results in Section~\ref{pseudo}. We say that the closed, unbounded
operator $A$ is \emph{affiliated to} the \csa\ $\cA$ of $\cL(\cH)$
if the conditions of the following lemma are satisfied.

\begin{lemma} Let $\cA$ be a \csa\ of $\cL(\cH)$ and let $R(z,A)\in\cA$ for some $z\notin\sig(A)$. Then
$R(w,A)\in\cA$ for all $w\notin\sig(A)$.
\end{lemma}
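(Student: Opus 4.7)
The plan is to express $R(w,A)$ as a product of $R(z,A)$ with an element that we can force to lie in $\cA$. Rearranging the resolvent identity $R(w,A) - R(z,A) = (z-w)R(w,A)R(z,A)$ gives $R(w,A)[I - (z-w)R(z,A)] = R(z,A)$, and since the two resolvents commute this yields the explicit formula
\[
R(w,A) = R(z,A)\bigl[I - (z-w)R(z,A)\bigr]^{-1},
\]
which is nothing but the pseudo-resolvent extension formula of Theorem~\ref{psrtheorem} specialized to a genuine resolvent. The case $w=z$ is trivial, so I assume $w\neq z$.

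First I would check that the inverse $C^{-1}$ of $C = I - (z-w)R(z,A)$ exists as an element of $\cL(\cH)$. By the spectral mapping identity (\ref{resolvspec}) for resolvents, $\sig(R(z,A)) = \{0\}\cup\{(z-s)^{-1}:s\in\sig(A)\}$, so $(z-w)^{-1}\in\sig(R(z,A))$ iff $w\in\sig(A)$. Since $w\notin\sig(A)$, we have $(z-w)^{-1}\notin\sig(R(z,A))$, so $C$ is invertible in $\cL(\cH)$.

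The main issue, and the only nontrivial point, is to promote this invertibility from $\cL(\cH)$ into $\cA$. Let $\cA^+ = \cA + \C I$ be the unitization of $\cA$ inside $\cL(\cH)$ (equal to $\cA$ itself if $I\in\cA$). Then $\cA^+$ is a \csa\ of $\cL(\cH)$ containing $C$, and spectral permanence for \csa s (see \cite[Chapter~1]{ped}) guarantees that $C$ is invertible in $\cA^+$ with the same inverse. Write $C^{-1} = \mu I + b$ with $\mu\in\C$ and $b\in\cA$. Multiplying out $C C^{-1} = I$ gives
\[
\mu I + b - \mu(z-w)R(z,A) - (z-w)R(z,A)\,b = I,
\]
and since the decomposition $\cA^+ = \C I + \cA$ is unique modulo $\cA$, matching the scalar component forces $\mu = 1$. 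Hence $C^{-1} = I + b$ with $b\in\cA$.

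Substituting back, $R(w,A) = R(z,A)(I+b) = R(z,A) + R(z,A)\,b$, which lies in $\cA$ because $R(z,A)\in\cA$ by hypothesis, $b\in\cA$, and $\cA$ is a norm-closed $^\ast$-subalgebra of $\cL(\cH)$. The main obstacle is conceptual rather than computational: one needs the spectral permanence property of \csa s to transfer invertibility from the ambient $\cL(\cH)$ back into $\cA$ (or $\cA^+$). Everything else is routine algebra with the resolvent identity.
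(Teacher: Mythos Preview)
Your proof is correct and follows the same route as the paper: write $R(w,A)=R(z,A)\bigl[I+(w-z)R(z,A)\bigr]^{-1}$, check invertibility of the bracket via the spectral identity (\ref{resolvspec}), and invoke spectral permanence to place the inverse in the algebra. You are a bit more careful than the paper in not assuming $I\in\cA$, passing instead to the unitization $\cA^+$; note, though, that once $C^{-1}=\mu I+b$ with $b\in\cA$ you already get $R(z,A)C^{-1}=\mu R(z,A)+R(z,A)b\in\cA$ for any scalar $\mu$, so the extra step forcing $\mu=1$ is unnecessary.
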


\begin{proof} If $X=I+(w-z)R(z,A)$ then $X\in\cA$ and
\begin{eqnarray*}
\sig(X)&=&\{1\}\cup\left\{ 1+\frac{w-z}{z-s}:s\in\sig(A)\right\}\\
&=& \{1\}\cup\left\{\frac{w-s}{z-s}:s\in\sig(A)\right\}.
\end{eqnarray*}
Since this does not contain $0$ we deduce that $X$ is invertible in
$\cL(\cH)$, and hence also invertible in $\cA$. Since
$R(w,A)=R(z,A)X^{-1}$ as in \cite[Theorem 1.2.10]{LOTS}, we deduce
that $R(w,A)\in\cA$.
\end{proof}

We say that a one-parameter group or semigroup $T_t$ is affiliated
to $\cA$ if its generator is affiliated in the above sense, i.\ e.\
if the associated resolvent family lies in $\cA$. If $H$ is a
typical \Schrodinger operator acting in $L^2(\R^d)$, then the
unitary operators $\rme^{-iHt}$ do not lie in the standard \ca\
$\cA$, but we will see they are affiliated to it.

Let $\cH= L^2(\R^d)$ and let $H_0$ be a constant coefficient
differential operator whose symbol is the polynomial $p$, so that
$H_0\phi=\cF^{-1}p\cF\phi$ where $\cF$ is the Fourier transform
operator and $p$ is regarded as an unbounded multiplication
operator. It is immediate that $H_0$ is a closed operator on
\[
\Dom(H_0)=\{\phi\in\cH:p\cF\phi\in\cH\}.
\]

\begin{theorem}\label{constdiff}
Suppose that $\lim_{|\xi|\to\infty}|p(\xi)|=+\infty$ and that there
exists a real constant $b$ such that $\Re(p(\xi))\leq b$ for all
$\xi\in\R^d$. Then $\sig(H_0)\subseteq \{z:\Re(z)\leq b\}$. If
$\Re(z)>b$ then $R(z,H_0)$ lies in the standard \ca\ $\cA$.
\end{theorem}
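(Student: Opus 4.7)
The plan is to handle the spectral bound and the resolvent membership separately, and to use the preceding lemma to reduce the second claim to a single value of $z$.

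For the spectral inclusion, the Fourier transform $\cF$ is a unitary on $\cH=L^2(\R^d)$ that conjugates $H_0$ into the maximal multiplication operator $M_p$ associated with $p$. The spectrum of $M_p$ is the essential range of $p$; since $p$ is continuous this is $\overline{p(\R^d)}$, and by hypothesis this set is contained in $\{z:\Re(z)\leq b\}$. Hence $\sig(H_0)\subseteq\{z:\Re(z)\leq b\}$.

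For the resolvent membership, by the preceding lemma it is enough to exhibit a single $z_0$ with $\Re(z_0)>b$ for which $R(z_0,H_0)\in\cA$. Fix such a $z_0$ and set
\[
q(\xi)=(z_0-p(\xi))^{-1},\qquad \xi\in\R^d.
\]
Because $\Re(z_0-p(\xi))\geq \Re(z_0)-b>0$, the denominator never vanishes, so $q$ is smooth, and $|q(\xi)|\leq (\Re(z_0)-b)^{-1}$. Because $|p(\xi)|\to\infty$ as $|\xi|\to\infty$, we also have $q(\xi)\to 0$ at infinity. Fourier conjugation gives $R(z_0,H_0)=\cF^{-1}M_q\cF$.

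The approximation step is to choose $\chi_N\in C_c^\infty(\R^d)$ with $\chi_N=1$ on $B(0,N)$ and $0\leq\chi_N\leq 1$, and to set $q_N=\chi_N q$ and $A_N=\cF^{-1}M_{q_N}\cF$. Since $q_N\in C_c^\infty(\R^d)\subseteq \cS(\R^d)$, its inverse Fourier transform $a_N=\cF^{-1}q_N$ is Schwartz, in particular $a_N\in L^1(\R^d)$, and $A_N\phi=a_N\ast\phi$. By Lemma~\ref{L1incA}, $A_N\in\cA$. The operator-norm estimate
\[
\norm A_N-R(z_0,H_0)\norm =\norm q_N-q\norm_\infty \leq \sup_{|\xi|\geq N}|q(\xi)|
\]
(coming from the Plancherel identification of convolution with multiplication on the Fourier side) tends to $0$ as $N\to\infty$ because $q$ vanishes at infinity. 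Since $\cA$ is norm-closed, $R(z_0,H_0)\in\cA$, and the lemma preceding the theorem propagates this to every $z\notin\sig(H_0)$, in particular to all $z$ with $\Re(z)>b$.

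The only point requiring care is the operator-norm convergence $A_N\to R(z_0,H_0)$; it is clean here only because the standing hypothesis $|p(\xi)|\to\infty$ forces $q$ to vanish at infinity, so that truncating on the Fourier side is genuinely a small perturbation in $\cL(\cH)$-norm. Without this the natural cutoff would converge only strongly, which would be insufficient for membership in the norm-closed algebra $\cA$.
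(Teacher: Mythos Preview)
Your proof is correct and follows essentially the same route as the paper: both argue that the resolvent symbol $(z-p(\xi))^{-1}$ lies in $C_0(\R^d)$, approximate it in sup-norm by Schwartz functions (you use a compactly supported cutoff $\chi_N$, the paper uses the Gaussian damping $\rme^{-|\xi|^2/n}$), and then invoke Lemma~\ref{L1incA} on the resulting $L^1$ convolution kernels. The only cosmetic difference is that you reduce to a single $z_0$ via the preceding lemma, whereas the paper's argument is written for an arbitrary $z$ with $\Re(z)>b$ directly; both are fine.
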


\begin{proof} We have $R(z,H_0)=\cF^{-1}\rho\cF$ where $\rho\in C_0(\R^d)$
is defined by
\[
\rho(\xi)=(z-p(\xi))^{-1}.
\]
If $n\geq 1$ we define
\[
\rho_n(\xi)=\rme^{-|\xi|^2/n}(z-p(\xi))^{-1}.
\]
Putting $R=R(z,H_0)$ and $R_n=\cF^{-1}\rho_n\cF$ we see that
\[
\lim_{n\to\infty}\norm R_n-R\norm=\lim_{n\to\infty}\norm
\rho_n-\rho\norm_\infty=0
\]
so it is enough to prove that $R_n\in\cA$ for all $n\geq 1$. Since
$\rho_n$ lies in the Schwartz space $\cS$ it is enough to observe
that $R_n\phi=k_n*\phi$ for all $\phi\in L^2$ where $k_n\in
\cS\subseteq L^1$; we may then apply Lemma~\ref{L1incA}.
\end{proof}

Before starting applications we change conventions so as to conform to the standard practice in quantum theory, writing $-H$ where one might expect to see $H$.

\begin{example} The differential operator
\[
(H_0\phi)(x,y) =-\frac{\partial^2\phi}{\partial x^2}-
\frac{\partial^3\phi}{\partial y^3}
\]
acting in $L^2(\R^2)$ has symbol $p(\xi,\eta)=\xi^2+i\eta^3$ and is highly non-elliptic. Nevertheless the conditions of Theorem~\ref{constdiff} are satisfied. The same applies to the non-negative, self-adjoint, differential operator acting in $L^2(\R^2)$ with real symbol
\[
p(\xi,\eta)=\xi^2+(\eta-\xi^n)^2
\]
where $n\geq 2$.
\end{example}

The following hypothesis is valid for a variety of second order
elliptic differential operators with variable coefficients; see
\cite{HKST}.
\begin{description}
\item[Hypothesis 1] The operator $-H_0$ is the generator of a strongly
continuous \ops\ $\rme^{-H_0t}$ on $L^2(\R^d)$. Moreover there
exist positive constants $c,\, \alp$ and an integral kernel
$K(t,x,y)$ such that
\begin{equation}
0\leq K(t,x,y)\leq c t^{-d/2}\rme^{-\alp|x-y|^2/t}\label{gaussian1}
\end{equation}
for all $t>0$ and $x,y\in\R^d$ and
\begin{equation}
(\rme^{-H_0t}\phi)(x)=\int_{\R^d}K(t,x,y) \phi(y)\, \rmd
y\label{gaussian2}
\end{equation}
for all $\phi\in L^2(\R^d)$ and $x\in\R^d$.
\end{description}

\begin{lemma} Under Hypothesis~1
\[
\sig(H_0)\subseteq \{ z:\Re(z)\geq 0\}
\]
and $(\lam I+H_0)^{-1}$ has an integral kernel $G(\lam ,x,y)$ for every $\lam >0$. There exists a function $g_\lam\in L^1(\R^d)$ and a constant $c_1>0$ such that
\[
0\leq G(\lam ,x,y)\leq g_\lam (x-y)
\]
and
\[
\norm (\lam I+H_0)^{-1}\norm \leq \norm g_\lam \norm_1 =c_1
\lam^{-1}<\infty. \]
\end{lemma}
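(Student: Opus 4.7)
The plan is to exploit the Gaussian kernel bound in Hypothesis~1 first to bound the semigroup uniformly in $t>0$, which then forces the spectrum of $H_0$ into the closed right half-plane, and second to represent the resolvent as the Laplace transform of the semigroup so that the kernel bound passes directly to a convolution majorant for the resolvent kernel.

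First I would show that $\rme^{-H_0 t}$ is uniformly bounded in $t>0$. From (\ref{gaussian1}) a direct integration gives
\[
\sup_x \int_{\R^d} K(t,x,y)\,\rmd y \leq c\int_{\R^d} t^{-d/2}\rme^{-\alp|z|^2/t}\,\rmd z = c(\pi/\alp)^{d/2}=:M,
\]
and by symmetry the same bound holds for $\sup_y\int K(t,x,y)\,\rmd x$. Schur's test applied to (\ref{gaussian2}) then yields $\norm \rme^{-H_0 t}\norm\leq M$ for all $t>0$. By standard semigroup theory the spectrum of the generator $-H_0$ lies in $\{\Re(z)\leq 0\}$, so $\sig(H_0)\sub\{z:\Re(z)\geq 0\}$ and every $\lam>0$ is in the resolvent set.

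Next I would represent the resolvent using the Laplace transform identity
\[
(\lam I+H_0)^{-1}\phi = \int_0^\infty \rme^{-\lam t}\rme^{-H_0 t}\phi\, \rmd t,
\]
valid for $\lam>0$ since $\rme^{-H_0 t}$ is uniformly bounded. Inserting (\ref{gaussian2}) and applying Fubini (justified by the positivity of $K$ and the already-established integrability) produces the integral kernel
\[
G(\lam,x,y)=\int_0^\infty \rme^{-\lam t}K(t,x,y)\, \rmd t.
\]
Combining this with the upper bound (\ref{gaussian1}) gives $0\leq G(\lam,x,y)\leq g_\lam(x-y)$, where
\[
g_\lam(z):=c\int_0^\infty \rme^{-\lam t}t^{-d/2}\rme^{-\alp|z|^2/t}\, \rmd t.
\]

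To finish I would compute $\norm g_\lam\norm_1$ by Fubini, interchanging the $z$ and $t$ integrals: the $z$-integral yields $(\pi t/\alp)^{d/2}$, which cancels the $t^{-d/2}$ factor and leaves $c(\pi/\alp)^{d/2}\int_0^\infty \rme^{-\lam t}\rmd t = c_1\lam^{-1}$ with $c_1:=c(\pi/\alp)^{d/2}$. In particular $g_\lam\in L^1(\R^d)$. Finally, since $G(\lam,\cdot,\cdot)$ is dominated pointwise by the convolution kernel $g_\lam(x-y)$ and both are non-negative, Lemma~\ref{posA1} gives $|(\lam I+H_0)^{-1}\phi|\leq g_\lam\ast|\phi|$ pointwise, whence by Young's inequality $\norm (\lam I+H_0)^{-1}\phi\norm_2 \leq \norm g_\lam\norm_1\norm\phi\norm_2$, completing the norm bound.

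The only point requiring any real care is justifying the Laplace transform representation as a genuine integral kernel identity (i.e.\ exchanging the $t$-integral with the $y$-integral); this is routine given the uniform kernel bound, but is the step where all the positivity and integrability hypotheses must be invoked simultaneously. Everything else is elementary manipulation of the Gaussian majorant.
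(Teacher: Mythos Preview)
Your proposal is correct and follows essentially the same approach as the paper: bound the semigroup uniformly via the Gaussian majorant $k_t(x)=ct^{-d/2}\rme^{-\alp|x|^2/t}$, deduce the half-plane spectral inclusion, write the resolvent kernel as the Laplace transform of the heat kernel, dominate by $g_\lam=\int_0^\infty \rme^{-\lam t}k_t\,\rmd t$, and compute $\norm g_\lam\norm_1$ by Tonelli. You are simply more explicit than the paper about invoking Schur's test for the semigroup bound and Young's inequality for the final resolvent norm estimate, whereas the paper leaves these implicit (it records $\norm k_t\norm_1=c_1$ and $\norm g_\lam\norm_1=c_1/\lam$ without spelling out the passage to operator norms).
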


\begin{proof} If we put
\[
k_t(x)=c t^{-d/2}\rme^{-\alp|x|^2/t}
\]
then there exists $c_1>0$ such that $\norm k_t\norm_1=c_1$ for all
$t>0$. Therefore $\norm\rme^{-H_0t}\norm\leq c_1$ for all $t>0$ and
$\sig(H_0)\subseteq \{z:\Re(z)\geq 0\}$. If $\lam >0$ the kernel $G$
satisfies
\begin{eqnarray*}
0\leq G(\lam ,x,y)&=&\int_0^\infty K(t,x,y)\rme^{-\lam t}\,\rmd t\\
&\leq & \int_0^\infty k_t(x-y)\rme^{-\lam t}\,\rmd t\\
&=& g_\lam(x-y),
\end{eqnarray*}
where the positivity of the functions involved implies that
\[
\norm g_\lam\norm_1=\int_0^\infty \norm k_t\norm_1\rme^{-\lam
t}\,\rmd t=c_1/\lam.
\]
Note finally that
\[
\ha{g}_\lam (\xi)=\frac{c_1}{\lam+c_2|\xi|^2}
\]
for some $c_2>0$, all $\lam >0$ and all $\xi\in\R^d$.
\end{proof}

\begin{example}\label{fractpower}
A bound of the type (\ref{gaussian1}) is not valid for
fractional powers of the Laplacian, i.\ e.\ $H_0=(-\ol{\lap})^\alp$
where $0<\alp<1$. However, in this case the \ops\ $\rme^{-H_0t}$ has the kernel
\[
K(t,x,y)= k_t(x-y)>0
\]
for all $t>0$, where $\norm k_t\norm_1=1$ and $
\ha{k}_t(\xi)=\rme^{-t|\xi|^{2\alp}} $ for all $t>0$ and
$\xi\in\R^d$. The construction of $k_t$ uses the theory of
fractional powers of generators of one-parameter semigroups; see
\cite[Chapter~9.11]{yoshida}. The resolvent operator $(\lam
I+H_0)^{-1}$ has the kernel
\[
G(\lam ,x,y)= g_\lam(x-y)>0
\]
for all $\lam >0$, where
\[
g_\lam (x)=\int_0^\infty k_t(x)\rme^{-\lam t}\rmd t >0.
\]
One deduces that $\norm g_\lam \norm_1=\lam^{-1}<\infty$ and
\[
\ha{g}_\lam(\xi)=\left(\lam+|\xi|^{2\alp}\right)^{-1}
\]
for all $\lam >0$ and $\xi\in\R^d$. The methods developed in this
paper still apply.
\end{example}

The above results allow us to reformulate our problem.

\begin{description}
\item[Hypothesis 2]
Let $K:\R^d\times \R^d\to\R$ and $k\in L^1(\R^d)$ satisfy
\[
0\leq K(x,y)\leq k(x-y)
\]
for all $x,\,y\in\R^d$. Let $R_0$ be the positive operator
associated with $K(x,y)$ and let $B$ be the positive operator
associated with $k(x-y)$, so that $0\leq R_0\leq B$.
Lemma~\ref{posA4} now implies that $\cV_B\subseteq \cV_{R_0}$.
\end{description}

If $R_0=(\lam I+H_0)^{-1}$ in the following theorem then $R=(\lam
I+H_0+V)^{-1}$ and the assumption $\norm V\norm_{R_0}<1$ states that
$V$ has relative bound less than $1$ with respect to $H_0$ in the
conventional language of perturbation theory.

\begin{lemma}\label{RinA}
Given Hypothesis~2, let the potential $V\in\cV_{R_0}$ satisfy $\norm
V\norm_{R_0}<1$ and put
\begin{equation}
R=R_0(I+VR_0)^{-1}=R_0\sum_{n=0}^\infty (-VR_0)^n.\label{Rseries}
\end{equation}
Then the operators $R_0$ and $R$ both lie in the standard \ca\
$\cA$.
\end{lemma}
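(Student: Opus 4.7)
The plan is to prove $R_0\in\cA$ first, then deduce $VR_0\in\cA$, and finally obtain $R\in\cA$ as a norm-convergent series of products of elements of $\cA$. The key structural observation is that although $V$ itself is typically not in $\cA$, the product $VR_0$ is in $\cA$, and the Neumann-type series for $R$ can be grouped so that every summand factors through powers of $VR_0$ preceded by $R_0$.

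For the first step, define the truncated operator $R_{0,n}$ to be the integral operator with kernel $K_n(x,y)=K(x,y)\chi_{\{|x-y|\leq n\}}$. Since $0\leq K_n\leq k(x-y)$, the operator $R_{0,n}$ is bounded, and if $P_{a,r}R_{0,n}P_{b,s}\not=0$ then there must exist $x\in B(a,r)$ and $y\in B(b,s)$ with $|x-y|\leq n$, forcing $d(a,b)\leq r+s+n$. Hence $R_{0,n}\in\cA_n\subseteq\cA$. To control the error, note that $0\leq R_0-R_{0,n}\leq B_n$ where $B_n$ is convolution with the positive $L^1$ function $k\,\chi_{\{|x|>n\}}$; Lemma~\ref{posA1} gives $\norm R_0-R_{0,n}\norm\leq\norm B_n\norm\leq\norm k\,\chi_{\{|x|>n\}}\norm_1$, which tends to $0$ as $n\to\infty$ by dominated convergence. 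Thus $R_0$ lies in the norm closure of $\bigcup_n\cA_n$, so $R_0\in\cA$.

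For the second step, recall from Lemma~\ref{posA3} that $V\in\cV_{R_0}$ means $\norm V-V^{(n)}\norm_{R_0}\to 0$ where each truncation $V^{(n)}$ lies in $L^\infty$. Since $\norm V-V^{(n)}\norm_{R_0}=\norm(V-V^{(n)})R_0\norm$, the products $V^{(n)}R_0$ converge to $VR_0$ in operator norm. Each $V^{(n)}$ is a bounded multiplication operator, so $V^{(n)}\in\cA$ by Theorem~\ref{newAdef}, and therefore $V^{(n)}R_0\in\cA$. Norm closure gives $VR_0\in\cA$.

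Finally, the hypothesis $\norm V\norm_{R_0}<1$ reads as $\norm VR_0\norm<1$, so the Neumann series $\sum_{n=0}^\infty(-VR_0)^n$ converges in operator norm to $(I+VR_0)^{-1}$, and $R=\sum_{n=0}^\infty R_0(-VR_0)^n$ converges in operator norm. Each summand $R_0(-VR_0)^n=(-1)^n R_0\cdot(VR_0)\cdots(VR_0)$ is a product of $n+1$ elements of $\cA$ (namely $R_0$ followed by $n$ copies of $VR_0$), hence belongs to $\cA$. Because $\cA$ is norm closed, the sum $R$ lies in $\cA$. The only non-routine step is verifying $R_0\in\cA$ via the kernel-truncation and positivity-domination argument; once that is in hand, everything else is formal manipulation inside the $C^\ast$-algebra.
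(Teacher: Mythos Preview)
Your proof is correct and follows essentially the same strategy as the paper: truncate the kernel of $R_0$ to obtain finite-propagation approximants in $\cA_n$, control the remainder by the $L^1$ tail of $k$, then use the bounded truncations $V^{(n)}\in L^\infty\subseteq\cA$ to get $VR_0\in\cA$, and finish with the norm-convergent Neumann series. The only cosmetic difference is that the paper phrases the finite-propagation step via $\cD_0$ and Lemma~\ref{Adef3}, whereas you invoke the definition of $\cA_n$ from Theorem~\ref{newAdef} directly.
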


\begin{proof} Given $\eps>0$ there exists $c\in\Z_+$ such that
$\int_{|x_1|>c}|k(x)|\, \rmd x<\eps$. If we put
\[
K_c(x,y)=\left\{ \begin{array}{ll}
K(x,y)&\mbox{ if $|x-y|\leq c$,}\\
0&\mbox{ otherwise,}
\end{array}\right.
\]
and define the operator $T$ on $\cH$ by
\[
(T_c\phi)(x)=\int_{\R^d} K_c(x,y)\phi(y)\, \rmd y
\]
then $\norm R_0-T_c\norm<\eps$ and
$T_cP_{S(n)}=P_{S(n+c)}T_cP_{S(n)}$ for every $S\in \cF$ and $n\geq
1$. hence $T_c\in\cD_0$ and $R_0\in\cD$. Applying the same argument
to $R_0^\ast$ yields $R_0\in\cA$ by virtue of Lemma~\ref{Adef3}. Defining $V^{(r)}$ as in Lemma~\ref{posA3}, the
identities $V^{(r)} P_{S(n)}=P_{S(n)} V^{(r)}$ for all $S,\, n$ and
$r$ imply that $V^{(r)}R_0\in\cA$. Hence $VR_0\in\cA$. The norm
convergence of the series in (\ref{Rseries}) now implies that
$R\in\cA$.
\end{proof}

We conclude with two applications to quantum theory. In the first we
consider with the \Schrodinger operator $H=H_0+V$ acting in $L^2(\R^d)$,
where $H_0=-\ol{\lap}$ and $V=W+X$ is a sum of possibly
complex-valued potentials satisfying the conditions specified below.
Passing to the resolvent operators we actually consider
$R_0=(aI+H_0)^{-1}$, $R_1=(aI+H_0+W)^{-1}$ and $R=(aI+H)^{-1}$,
where $a>0$ is large enough to ensure that all the inverses exist.

\begin{theorem} Suppose that $V$ and $W$ lie in the space $\cV_{R_0}$
defined just before Lemma~\ref{posA3} and that $\norm
V\norm_{R_0}<1$, $\norm W\norm_{R_0}<1$. Suppose that $W$ is
periodic in the $x_1$ direction. Let $S=\{x\in \R^d:|x_1|<1\}$, so
that $S(n)=\{x\in \R^d:|x_1|<n+1\}$ for all $n\geq 1$. Suppose that
$X$ has support in $S(c)$ for some $c\geq 1$. Finally define
the ideal $\cJ_S\subseteq\cA$ as in
Theorem~\ref{idealdef}. Then
\begin{equation}
\sig(H_0+W)=\sigess(H_0+W)=\sig_S(H_0+W)=\sig_S(H)\subseteq\sigess(H)\subseteq
\sig(H)\label{Hspec}
\end{equation}
where $\sig_S(A)=\sig(\pi_{\cJ_S}(A))$ for every $A\in\cA$.
\end{theorem}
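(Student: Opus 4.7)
The plan is to work at the level of bounded resolvents $R_0,R_1,R\in\cA$ (their membership in $\cA$ is granted by Lemma~\ref{RinA} since $\norm V\norm_{R_0},\norm W\norm_{R_0}<1$) and then transfer spectral conclusions back to $H_0+W$ and $H$ via the pseudo-resolvent spectral mapping of Section~\ref{pseudo}. Under that correspondence, (\ref{Hspec}) reduces to
\[
\sig(R_1)=\sigess(R_1)=\sig_S(R_1)=\sig_S(R)\subseteq\sigess(R)\subseteq\sig(R),
\]
and the outer inclusions $\sig_S(\cdot)\subseteq\sigess(\cdot)\subseteq\sig(\cdot)$ are immediate from Lemma~\ref{kideals} together with $\cK(\cH)\subseteq\cJ_S\subseteq\cA$ (Theorem~\ref{idealdef}). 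The two genuinely nontrivial pieces are therefore $\sig_S(R)=\sig_S(R_1)$ and $\sig(R_1)\subseteq\sig_S(R_1)$.

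For $\sig_S(R)=\sig_S(R_1)$ the target is $R-R_1\in\cJ_S$. The second resolvent identity gives $R_1-R=R_1XR$, so it suffices to show $XR_1\in\cJ_S$. Since $X=V-W\in\cV_{R_0}$, Lemma~\ref{posA3} supplies bounded truncations $X^{(n)}\in L^\infty$ with $\norm X^{(n)}-X\norm_{R_0}\to 0$; each $X^{(n)}$ is a bounded multiplication operator supported in $S(c)$, so $X^{(n)}=P_{S(c)}X^{(n)}P_{S(c)}\in\cJ_{S,c}\subseteq\cJ_S$, whence $X^{(n)}R_1\in\cJ_S$ by the ideal property. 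Writing $R_1=R_0(I+WR_0)^{-1}$ with $\norm(I+WR_0)^{-1}\norm<\infty$ (since $\norm WR_0\norm\leq\norm W\norm_{R_0}<1$) we get
\[
\norm(X-X^{(n)})R_1\norm\leq\norm X-X^{(n)}\norm_{R_0}\,\norm(I+WR_0)^{-1}\norm\to 0,
\]
so closedness of $\cJ_S$ forces $XR_1\in\cJ_S$ and therefore $R-R_1=-RXR_1\in\cJ_S$. Hence $\pi_{\cJ_S}(R)=\pi_{\cJ_S}(R_1)$ and $\sig_S(R)=\sig_S(R_1)$.

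For $\sig(R_1)\subseteq\sig_S(R_1)$ I would adapt the Weyl-sequence argument used in Theorems~\ref{localize} and~\ref{asymper}. The $x_1$-periodicity of $W$ and the translation invariance of $H_0$ make the unitary translation $T$ by one period of $W$ in the $x_1$-direction commute with $R_1$ and with $R_1^\ast$. Given $\lam\in\sig(R_1)$, pick a Weyl sequence $\{\phi_r\}$ at $\lam$ for $R_1$ or $R_1^\ast$ via \cite[Lemma~1.2.13]{LOTS}. Since $\norm p_n T^N\phi\norm\to 0$ as $N\to\infty$ for every fixed $\phi\in\cH$ by dominated convergence, I can choose integers $N_r$ with $\norm p_r T^{N_r}\phi_r\norm<1/r$. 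The translates $\psi_r:=T^{N_r}\phi_r$ are unit Weyl vectors at $\lam$ satisfying $\norm p_n\psi_r\norm\to 0$ for every fixed $n$; if we had $\lam\notin\sig_S(R_1)$, inverting $\pi_{\cJ_S}(\lam 1-R_1)$ in $\cA/\cJ_S$ and applying Lemma~\ref{weaklim} exactly as in the proof of Theorem~\ref{localize} would yield the contradiction $1=\lim_r\norm(1+a)\psi_r\norm=0$ for some $a\in\cJ_S$. Combining this with the first step and pulling back through the pseudo-resolvent machinery of Section~\ref{pseudo} gives (\ref{Hspec}). The main obstacle is the ideal containment step: $X$ itself lies only in $\cV_{R_0}$ and never in $\cA$, so $XR_1\in\cJ_S$ must be routed through the bounded, properly supported truncations $X^{(n)}$, exploiting both the norm closure in the definition of $\cJ_S$ (Theorem~\ref{idealdef}) and the $R_0$-relative perturbation estimates of Section~\ref{sectF}.
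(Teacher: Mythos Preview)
Your proof is correct and follows essentially the same route as the paper's. The only cosmetic difference is that the paper reaches $\pi_{\cJ_S}(R)=\pi_{\cJ_S}(R_1)$ via the explicit factorization $I+VR_0=(I+XR_1)(I+WR_0)$, giving $R=R_1(I+XR_1)^{-1}$, rather than the second resolvent identity; both arguments hinge on the same key containment $XR_1\in\cJ_S$ (proved via the truncations $X^{(n)}$ of Lemma~\ref{posA3}) and the same periodicity/Weyl-sequence argument from Theorem~\ref{asymper} for $\sig(R_1)\subseteq\sig_S(R_1)$.
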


\begin{proof} The operators $R_0,\, R_1$ and $R$ all lie in $\cA$ for large enough $a>0$ by
Lemma~\ref{RinA}. The equation (\ref{Hspec}) is equivalent, by definition, to \begin{equation}
\sig(R_1)=\sigess(R_1)=\sig_S(R_1)=\sig_S(R)\subseteq\sigess(R)\subseteq
\sig(R)\label{Rspec}
\end{equation}
The proof of the first two equalities in (\ref{Rspec})
uses the periodicity of $R_1$ in the $x_1$ direction as in the proof
of Theorem~\ref{asymper}. We next observe that
\begin{eqnarray*}
I+VR_0&=& I+WR_0+XR_0\\
&=& \left\{ I+XR_0(1+WR_0)^{-1}\right\} (I+WR_0)\\
&=& (I+XR_1)(I+WR_0).
\end{eqnarray*}
Since $I+VR_0$ and $I+WR_0$ are invertible, it follows that $I+XR_1$
is invertible. Therefore
\begin{eqnarray*}
R&=& R_0(I+VR_0)^{-1}\\
&=&R_0(I+WR_0)^{-1}(I+XR_1)^{-1}\\
&=& R_1(I+XR_1)^{-1}.
\end{eqnarray*}
Since $X\in\cV_{R_0}$ has support in $S(c)$ and $\cJ_S$ is an ideal we
can use Lemma~\ref{posA3} to deduce that $XR_1\in \cJ_S$. Therefore
\[
\pi_{\cJ_S}(R)=\pi_{\cJ_S}(R_1)\left\{
I+\pi_{\cJ_S}(XR_1)\right\}^{-1}=\pi_{\cJ_S}(R_1).
\]
The inclusions in (\ref{Rspec}) now follow by applying
Lemma~\ref{kideals}.\end{proof}

\begin{example}\label{mbso}
We next point out the relevance of the above results to multi-body
\Schrodinger operators. Let $\cH=L^2(\R^3\times \R^3)$ and put $x=(x_1,x_2)$ where $x_i\in \R^3$. Let $H_0=-\ol{\lap}$ and define
\[
H=H_0+V_1(x_1)+V_2(x_2)+V_3(x_1-x_2).
\]
where all three potentials lie in
$L^2(\R^3)+C_0(\R^3)$. By allowing $V_1,\, V_2$ and $V_3$ to be complex-valued we include in our analysis the non-self-adjoint \Schrodinger operators that arise in when discussing resonances via complex scaling. For suitable choices of $V_i$ this operator might be regarded as describing
two (spinless) electrons orbiting around a fixed nucleus (a simplified Helium
atom). Standard estimates imply that $V_i$ all have relative bound $0$ with respect to $H_0$ and that they all lie in $\cV_{R_0}$ with $\norm V_i\norm_{R_0}<1/3$ provided $R_0=(aI+H_0)^{-1}$ and $a>0$ is large enough. Lemma~\ref{RinA} implies that all of the relevant resolvent operators lie in the standard \ca\ $\cA$.

One can produce several asymptotic sets from $\{x:|x_1|<1\}$, $\{x:|x_2|<1\}$,
$\{x:|x_1-x_2|<1\}$, and we will concentrate on two of these. If one puts
\[
S=\{x:|x_1|<1\}\cup \{x:|x_2|<1\}\cup\{x:|x_1-x_2|<1\},
\]
it is evident that $S\in\cF$ and that $V_1+V_2+V_3\in\cJ_S$. Hence
\[
\sig_S(H)=\sig_S(H_0)=[0,\infty).
\]
This set relates to the states in which both particles move away to
infinity and they also separate from each other. On the other hand
if one puts
\[
T=\{x:|x_2|<1\}\cup\{x:|x_1-x_2|<1\},
\]
it is evident that $T\in\cF$ and that $V_2+V_3\in\cJ_T$. Hence
\[
\sig_T(H)=\sig_T(H_0+V_1)=\sig(H_0+V_1).
\]
This set relates to the states in which particle 2 moves away to
infinity and also separates from particle 1, which may or may not
stay close to the nucleus. If $A=-\ol{\lap}+V_1$ acting in $L^2(\R^3)$ then by taking Fourier transforms with respect to $x_2$ it is seen that
\[
\sig(H_0+V_1)=\sig(A)+[0,\infty)
\]
where $\sig(A)=[0,\infty)\cup \{ \lam_n\}$, where  $\lam_n$ are the possibly complex-valued discrete eigenvalues of the operator $A$.
\end{example}

\section{Hyperbolic space\label{hyperb}}

Let $(X,d,\mu)$ denote a complete non-compact Riemannian manifold
$X$ with bounded geometry, Riemannian metric $d$ (in the sense of
the triangle inequality) and Riemannian measure $\mu$. The
Laplace-Beltrami operator $H=-\lap$ on $L^2(X,\mu)$ is essentially
self-adjoint of $C_c^\infty(X)$ and the spectrum of its closure is
contained in $[0,\infty)$. The one-parameter semigroup
$\{\rme^{-Ht}\}_{t\geq 0}$ is associated with a positive $C^\infty$
heat kernel $K$ by
\[
(\rme^{-Ht}f)(x)=\int_X K(t,x,y)f(y)\, \mu(\rmd y)
\]
The kernel $K$ satisfies
\[
\int_XK(t,x,y)\,\mu(\rmd x)=1
\]
for all $x\in X$ and $t> 0$. We wish to show that $\rme^{-Ht}$ and
$(\lam I+H)^{-1}$ lie in the \ca\ $\cA$ for all $t,\, \lam>0$.
Rather than proving this under the weakest possible conditions,
we consider the hyperbolic space $\H^3$, in which all of the
expressions involved may be written down explicitly. The proof that
we given may be extended to $\H^d$ for arbitrary $d\geq 2$ with
minimal effort.

The geometry of hyperbolic space is well-studied; see
\cite[Section~4.6]{Rat} for the results listed below. In the upper
half space model $\H^n$ is the set $\{ x\in \R^n:x_n>0\} $ with the
local Riemannian metric
\[
\rmd s^2=\frac{\rmd^2x_1+\cdots \rmd^2x_n}{x_n^2}.
\]
The global metric $d$ is given by
\[
\cosh(d(x,y)) = 1+ \frac{|x-y|^2}{2x_ny_n}
\]
and the volume element is given by
\[
\mu(dx)=\frac{\rmd x_1\cdots \rmd x_n}{x_n^n}.
\]
The area of the unit sphere $S(x,r)$ of radius $r>0$ does not depend
on $x\in X$ and is given by
\[
\rho(r)=c_n\sinh^{n-1}(r)
\]
where $c_3=4\pi$. If $f:(0,\infty)\to \R$ is any positive,
measurable function then
\[
\int_X f(d(x,y))\, \mu(\rmd y)=\int_0^\infty f(r)\rho (r)\,\rmd r
\]
for all $x\in X$.

If $X=\H^n$, the spectrum of $H=-\ol{\lap}$ acting in $L^2(X,\mu)$
is equal to $[(n-1)^2/4,\infty)$, but the $L^p$ spectrum depends on
$p$; see \cite{DST}. The heat kernel may be written in the form
$K(t,x,y)=k_t(d(x,y))$, where for $n=3$ we have
\[
k_t(r)=(4\pi t)^{-n/2}\frac{r}{\sinh(r)}\rme^{-t-d(x,y)^2/4t}.
\]
See \cite{DGM}; see also \cite{DM} for relevant upper and lower
bounds when $n\not= 3$. One verifies directly that
\begin{eqnarray*}
\int_X K(t,x,y)\mu(\rmd y)&=&\int_0^\infty k_t(r)\rho(r)\,\rmd r \\
&=& \int_0^\infty (4\pi)^{-1/2}t^{-3/2} r\sinh(r) \rme^{-t-r^2/4t}\,\rmd r\\
&=& \int_{-\infty}^\infty (4\pi)^{-1/2}t^{-3/2} 2^{-1}r \rme^{r-t-r^2/4t}\,\rmd r\\
&=& \int_{-\infty}^\infty (4\pi)^{-1/2}t^{-3/2} 2^{-1}r \rme^{-(r-2t)^2/4t}\,\rmd r\\
&=& 1
\end{eqnarray*}
for all $t>0$.

If $\lam >0$ the operator $(H+\lam I)^{-1}$ has a Green function $G$
given explicitly by $G(\lam,x,y)=g_\lam(d(x,y))$, where
\[
g_\lam(r)=\int_0^\infty \rme^{-\lam t}k_t(r)\,\rmd t= \frac{
\rme^{-r\sqrt{\lam +1}}  }{ 4\pi\sinh(r) }.
\]
A direct calculation establishes that
\begin{eq}
\int_X G(\lam,x,y)\mu(\rmd y)=\int_0^\infty g_\lam(r)\rho(r)\,\rmd r
=1/\lam\label{gint}
\end{eq}
for all $\lam >0$. We will need the following lemma.

\begin{lemma}\label{L1normbounds}
If
\[
(Rf)(x)=\int_X r(x,y)\mu(\rmd y)
\]
for all $x\in X$ and $f\in L^2(X,\mu)$, then
\[
\norm R\norm_{L^2(X,\mu)}^2\leq \left\{ \sup_{x\in
X}\int_X|r(x,y)|\mu(\rmd y)\right\} \left\{ \sup_{x\in
X}\int_X|r(y,x)|\mu(\rmd y)\right\}.
\]
\end{lemma}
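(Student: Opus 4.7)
This is the classical Schur test for integral operators; the standard proof by Cauchy--Schwarz and Fubini adapts immediately. I assume the statement intends $(Rf)(x)=\int_X r(x,y)f(y)\,\mu(\rmd y)$ (the factor $f(y)$ is missing from the integrand in the displayed formula).

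The plan is to abbreviate
\[
M_1=\sup_{x\in X}\int_X |r(x,y)|\,\mu(\rmd y),\qquad M_2=\sup_{x\in X}\int_X |r(y,x)|\,\mu(\rmd y),
\]
and show $\norm Rf\norm_2^2\leq M_1 M_2\,\norm f\norm_2^2$ for every $f\in L^2(X,\mu)$. First I would split the integrand as $|r(x,y)||f(y)|=|r(x,y)|^{1/2}\cdot |r(x,y)|^{1/2}|f(y)|$ and apply the Cauchy--Schwarz inequality in the $y$-variable:
\[
|(Rf)(x)|\leq \int_X |r(x,y)||f(y)|\,\mu(\rmd y)
\leq \left(\int_X |r(x,y)|\,\mu(\rmd y)\right)^{1/2}\!\left(\int_X |r(x,y)||f(y)|^2\,\mu(\rmd y)\right)^{1/2}.
\]
The first factor is bounded by $M_1^{1/2}$ uniformly in $x$.

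Next I would square, integrate over $x$, and invoke Tonelli's theorem (the integrand is non-negative, so no measurability subtlety arises) to swap the order of integration:
\[
\int_X |(Rf)(x)|^2\,\mu(\rmd x)\leq M_1\int_X |f(y)|^2\!\left(\int_X |r(x,y)|\,\mu(\rmd x)\right)\mu(\rmd y).
\]
The inner integral is at most $M_2$ by definition, which gives $\norm Rf\norm_2^2\leq M_1 M_2\norm f\norm_2^2$ and hence the stated bound.

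There is no real obstacle: the argument is purely a two-line Cauchy--Schwarz plus Fubini. The only thing to watch is the trivial case $M_1=\infty$ or $M_2=\infty$, in which the stated inequality is vacuous, so one may harmlessly assume both are finite. If one were being pedantic, one would also note that Tonelli applies without any integrability hypothesis on $r$ because $|r|$ and $|f|^2$ are non-negative; this justifies the interchange even before the bound on $\norm Rf\norm_2$ is known to be finite.
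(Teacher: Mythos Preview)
Your proof is correct; this is exactly the standard Schur test argument via Cauchy--Schwarz and Tonelli. The paper does not actually prove this lemma but simply cites \cite[Cor.~2.2.15]{LOTS}, so your write-up supplies more detail than the paper itself.
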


See \cite[Cor.~2.2.15]{LOTS} for the proof.

\begin{theorem} If $\lam >0$ and $t>0$ then $\rme^{-Ht}$ and $(H+\lam I)^{-1}$ both lie in the standard \ca\ $\cA$.
\end{theorem}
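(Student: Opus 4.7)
The plan is to exhibit each operator as a norm limit of members of the sets $\cA_n$ introduced in Section~\ref{sectC}; since $\cA$ is defined as the norm closure of $\bigcup_{n} \cA_n$, this suffices. Both operators have integral kernels of the form $F(d(x,y))$ for a positive, continuous function $F$ on $(0,\infty)$ satisfying $\int_0^\infty F(r)\rho(r)\,\rmd r < \infty$ (namely $F = k_t$ with integral $1$ for the heat kernel, and $F = g_\lam$ with integral $1/\lam$ for the resolvent, by the calculations just performed).

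Given such an $F$, define the truncated kernel $F_n(r) = F(r)\chi_{[0,n]}(r)$ and let $R_n$ be the associated integral operator on $L^2(X,\mu)$. The first step is to verify that $R_n \in \cA_n$. This is essentially the triangle inequality: if $P_{a,r}R_n P_{b,s} \not= 0$ then by Lemma~\ref{epsball} (applied after the obvious observation that the kernel of $P_{a,r}R_n P_{b,s}$ is supported in $B(a,r)\times B(b,s) \cap \{(x,y): d(x,y)\leq n\}$) there exist $x \in B(a,r)$ and $y \in B(b,s)$ with $d(x,y)\leq n$, so $d(a,b)\leq r+s+n$. Hence $R_n \in \cA_n \sub \ti\cA$.

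The second step is to show $\norm R-R_n\norm \to 0$. The difference $R - R_n$ is an integral operator with kernel $F(d(x,y))\chi_{\{d(x,y) > n\}}$. By Lemma~\ref{L1normbounds}, together with the fact that this kernel is symmetric in $x$ and $y$ and that $d$-geodesic balls have volume depending only on their radius,
\[
\norm R-R_n\norm \leq \sup_{x\in X}\int_X F(d(x,y))\chi_{\{d(x,y)>n\}}\,\mu(\rmd y) = \int_n^\infty F(r)\rho(r)\,\rmd r.
\]
The integrability of $F\rho$ already established forces the right-hand side to tend to $0$ as $n\to\infty$.

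Combining these two steps, $R$ is a norm limit of a sequence in $\ti\cA$, so $R\in\cA$. Applying this with $F = k_t$ yields $\rme^{-Ht}\in\cA$ for every $t>0$, and applying it with $F = g_\lam$ yields $(H+\lam I)^{-1}\in\cA$ for every $\lam>0$. The only real content is the application of Lemma~\ref{L1normbounds} to obtain the operator norm bound from the explicit convolution form of the kernel; once that is in hand, the localization argument matching Lemma~\ref{L1incA} in the Euclidean case goes through unchanged. I do not expect any serious obstacles, since the explicit formulas for $k_t$ and $g_\lam$ together with the earlier identity $\int_X G(\lam,x,y)\mu(\rmd y) = 1/\lam$ (and its analogue for the heat kernel) supply all of the required integrability.
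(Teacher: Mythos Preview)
Your proposal is correct and follows essentially the same route as the paper: truncate the radial kernel at distance $n$ to obtain an operator in $\cA_n$, and use Lemma~\ref{L1normbounds} together with the finiteness of $\int_0^\infty F(r)\rho(r)\,\rmd r$ to show the tail tends to zero in norm. The only cosmetic difference is that you treat $k_t$ and $g_\lam$ simultaneously via a generic $F$, whereas the paper writes out the resolvent case and declares the semigroup case similar; also, your invocation of Lemma~\ref{epsball} in verifying $R_n\in\cA_n$ is unnecessary, since the support condition on the kernel gives the triangle-inequality bound directly.
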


\begin{proof} The proof is almost the same in both cases so we only treat
the resolvent operators. We have $(\lam I+H)^{-1}=A_n+B_n$ where
\begin{eqnarray*}
(A_nf)(x)&=&\int_X a_n(x,y)f(y)\mu(\rmd y),\\
a_n(x,y)&=&\ti{a}_n(d(x,y)),\\
\ti{a}_n(r)&=&\begin{choices}
g_\lam(r)&\mbox{if $r\leq n$,}\\
0&\mbox{otherwise,}
\end{choices}
\end{eqnarray*}
and
\begin{eqnarray*}
(B_nf)(x)&=&\int_X b_n(x,y)f(y)\mu(\rmd y),\\
b_n(x,y)&=&\ti{b}_n(d(x,y)),\\
\ti{b}_n(r)&=&\begin{choices}
g_\lam(r)&\mbox{if $r> n$,}\\
0&\mbox{otherwise.}
\end{choices}
\end{eqnarray*}
It follows from its definition that $A_n\in\cA_n$ and from
(\ref{gint}) and Lemma~\ref{L1normbounds} that
$\lim_{n\to\infty}\norm B_n\norm =0$.
\end{proof}

\begin{example} The ideas in the second part of Section~\ref{sectC} can be applied in the setting of hyperbolic space. In the upper half space model the natural compactification has $\partial \H^n\sim (\R^{n-1}\times\{0\})\cup\{\infty\}$. If we put $S=\{x\in \H^n:0<x_n<1\}$ then $S(m)=\{x\in \H^n:0<x_n<\rme^m\}$. Moreover $\ti{S(m)}=\ha{S}=\R\times\{0\}$ for all $m\geq 1$. Therefore the quotient map $\pi:\cB\to \cB/(\cJ_S\cap\cB)\simeq\C$ is given by $\pi(f)=f(\infty)$.
\end{example}

{\bf Acknowledgements} I should like to thank A Pushnitski and J Weir for helpful comments on an earlier version of the paper.

\vspace{4em}

Department of Mathematics\\
King's College\\
Strand\\
London WC2R 2LS

E.Brian.Davies@kcl.ac.uk

\end{document}